\def\ddefloop#1{\ifx\ddefloop#1\else\ddef{#1}\expandafter\ddefloop\fi}
\xpatchcmd{\paragraph}{\normalfont}{{\normalfont\bfseries}}{}{}
\tikzset{
  closed/.style = {decoration = {markings, mark = at position 0.5 with { \node[transform shape, xscale = .8, yscale=.4] {/}; } }, postaction = {decorate} },
  open/.style = {decoration = {markings, mark = at position 0.5 with { \node[transform shape, scale = .7] {$\circ$}; } }, postaction = {decorate} }
}
\def\ddef#1{\expandafter\def\csname bb#1\endcsname{\ensuremath{\mathbb{#1}}}}
\def\ddef#1{\expandafter\def\csname ff#1\endcsname{\ensuremath{\mathfrak{#1}}}}
\def\ddef#1{\expandafter\def\csname cc#1\endcsname{\ensuremath{\mathcal{#1}}}}
\newcommand{\pt}{\mathrm{pt}}
\newcommand{\Spec}{\mathrm{Spec}}
\newcommand{\Symm}{\mathrm{Sym}\,}
\newcommand{\rk}{\mathrm{rk}}
\newcommand{\grk}{\mathrm{grk}}
\newcommand{\mrk}{\mathrm{mrk}}
\newcommand{\Bl}{\mathrm{Bl}}
\newcommand{\pic}{\mathrm{Pic}\,}
\newcommand{\ord}{\mathrm{ord}\,}
\renewcommand{\div}{\mathrm{div}\,} 
\newcommand{\mori}{{\overline{\mathrm{NE}}}}
\newcommand{\Pic}{\mathfrak{Pic}}
\newcommand{\codim}{\mathrm{codim}}
\newcommand{\cl}{\mathrm{cl}}
\newcommand{\GG}{\pmb{G}}
\DeclareMathOperator*{\fprodM}{\sideset{}{_{\ffM_{g,n}}}\prod}
\DeclareMathOperator*{\fprodGamma}{\sideset{}{_{\Gamma_{0,n}}}\prod}
\newcounter{mainresults}
\newtheorem{theorem}[subsubsection]{Theorem}
\newtheorem{corollary}[subsubsection]{Corollary}
\newtheorem{lemma}[subsubsection]{Lemma}
\newtheorem{proposition}[subsubsection]{Proposition}
\newtheorem{maintheorem}[mainresults]{Theorem}
\theoremstyle{definition}
\newtheorem{definition}[subsubsection]{Definition}
\newtheorem{construction}[subsubsection]{Construction}
\newtheorem{remark}[subsubsection]{Remark}
\newtheorem{example}[subsubsection]{Example}
\newtheorem{notation}[subsubsection]{Notation}
\newtheorem{assumption}[subsubsection]{Assumption}
\title[Irreducible components of $\overline{\ccM}_{0,n}(X,\beta)$]{Irreducible components of moduli spaces of maps to smooth projective toric varieties in genus 0}
\author[A.Cobos Rabano, E. Mann]{Alberto Cobos Rabano, Etienne Mann}
\address{Alberto Cobos Rabano, KU Leuven, Department of Mathematics, Celestijnenlaan 200B box 2400, BE-3001 Leuven, Belgium}
\email{alberto.cobosrabano@kuleuven.be}
\address{\'Etienne Mann, Univ Angers, CNRS, LAREMA, SFR MATHSTIC, F-49000 Angers, France}
\email{etienne.mann@univ-angers.fr}
\begin{document}

\begin{abstract}
     We give a combinatorial description of the irreducible components of the moduli space $\overline{\mathcal{M}}_{0,n}(X,\beta)$ for a smooth projective toric variety $X$. The result is based on the study of the irreducible components of an abelian cone over a smooth Noetherian Artin stack. We give concrete applications of the result including $\overline{\mathcal{M}}_{0,0}(\Bl_\pt \bbP^2,2\ell)$, where we also describe the main component. This is the first example where the smoothable locus of $\overline{\mathcal{M}}_{g,n}(X,\beta)$ is described for $X$ not a projective space.
\end{abstract}

\maketitle

\tableofcontents

\section{Introduction}

\subsection{Results}

Let $X$ be a smooth projective toric variety and $\beta$ a non-zero effective curve class on $X$. We study the irreducible components of the moduli space $\overline{\ccM}_{0,n}(X,\beta)$ of genus-0 $n$-marked stable maps to $X$ of class $\beta$. 

For that, we consider the locus $\ccM_{\GG}$ with fixed stable decorated marked dual tree $\GG\in\Gamma^{st}_{0,n}(X,\beta)$ (\Cref{def:decorated_marked_tree}), whose closure we denote by $\overline{\ccM}_{\GG}$, and the integer $d_{\GG}$ in \Cref{eq:definition_dG_prod} (see also \Cref{notation:h1_decorated_graph}), which controls the dimension of the locus $\ccM_{\GG}$. 
Our main result is the following description of the irreducible components of $\overline{\ccM}_{0,n}(X,\beta)$.

\begin{maintheorem}\label{mainthm:component_maps}(\Cref{cor:irreducible_decomposition_maps})
    The irreducible components of $\overline{\ccM}_{0,n}(X,\beta)$ are the loci $\overline{\ccM}_{\GG}$ indexed by isomorphism classes of decorated marked trees $\GG \in \Gamma_{0,n}^{st}(X,\beta)$ such that
    \begin{enumerate}
        \item the stack $\ccM_{\GG}$ is non empty and
        \item\label{item:numerical_condition_mainthm}  for all $\GG' \in \Gamma_{0,n}^{st}(X,\beta)$ with  $\Pic_{\GG}\subseteq \overline{\Pic_{\GG'}}$ we have 
$ d_{\GG}\geq d_{\GG'}$. 
    \end{enumerate}
        
\end{maintheorem}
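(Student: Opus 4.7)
The strategy is to combine two ingredients available by this point in the paper: the stratification of $\overline{\ccM}_{0,n}(X,\beta)$ into locally closed strata $\ccM_\GG$ indexed by stable decorated marked trees $\GG\in \Gamma_{0,n}^{st}(X,\beta)$, and the general result (advertised in the abstract) describing the irreducible components of an abelian cone over a smooth Noetherian Artin stack. The plan is to realise $\overline{\ccM}_{0,n}(X,\beta)$ as such an abelian cone and then read off its irreducible components combinatorially.

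First, I would establish the abelian cone structure. Using the toric presentation of $X$ as a GIT quotient of an affine space by a torus, a stable map $C\to X$ of class $\beta$ is encoded by the underlying prestable curve, a tuple of line bundles on $C$ with prescribed multidegrees (determined by $\beta$ and the combinatorial type $\GG$), and a tuple of sections of those line bundles satisfying a stability condition. Fixing $\GG$ therefore fibres $\ccM_\GG$ over the smooth stack $\Pic_\GG$ parameterising curves together with the required line bundle data, with fibers made of sections and of dimension controlled by $d_\GG$. This exhibits $\overline{\ccM_\GG}$ as irreducible, of dimension determined by $\dim\Pic_\GG$ and $d_\GG$.

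Second, I would analyse the closure relations between strata. The containment $\Pic_\GG\subseteq\overline{\Pic_{\GG'}}$ in the base stack corresponds to a degeneration of decorated trees, and via the cone structure the question of whether $\overline{\ccM_\GG}\subseteq\overline{\ccM_{\GG'}}$ reduces to a comparison of the fiber data of the cones over the two strata. By the definition of $d_\GG$ this comparison is exactly the inequality in condition~(2) of the statement; combined with the non-emptiness condition~(1), an application of the general theorem on irreducible components of abelian cones then identifies the $\overline{\ccM_\GG}$ satisfying both conditions as precisely the irreducible components of $\overline{\ccM}_{0,n}(X,\beta)$.

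The main obstacle I expect is the first step. One needs $\overline{\ccM}_{0,n}(X,\beta)$ (or at least a dense open piece in each stratum) to be genuinely realised as an abelian cone over a \emph{smooth} Artin stack, and one must verify that the fiber dimension of this cone over each $\Pic_\GG$ is given by the explicit combinatorial invariant $d_\GG$ defined earlier. Stability of maps and the delicate choice of base stack over which to build the cone are the principal technical difficulties; once this dictionary is pinned down, the rest is a formal consequence of the abelian cone theorem combined with the tree stratification.
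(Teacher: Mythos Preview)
Your approach is essentially the paper's, but one point deserves sharpening. The moduli space $\overline{\ccM}_{0,n}(X,\beta)$ is \emph{not} itself an abelian cone over $\Pic^{st}_{0,n,X,\beta,\sigma}$; rather, via the Cox description it is an \emph{open} substack of the abelian cone $S = \Spec_{\ffP^{st}}\Symm\bigl(\oplus_{\rho}R^1\pi_*(\ffL_\rho^\vee\otimes\omega_\pi)\bigr)$, cut out by the non-degeneracy condition on the sections (an open condition, not part of the cone structure). So the fibres of $\ccM_\GG\to\Pic_\GG$ are open subsets of vector spaces, not vector spaces.

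The paper therefore proceeds in two steps: first apply the abelian cone theorem to the full cone $S$, obtaining its components $\overline{S_\GG}$ indexed by the $\GG$ satisfying your condition~(2); then observe that since $\overline{\ccM}_{0,n}(X,\beta)\hookrightarrow S$ is open, its irreducible components are exactly those $\overline{S_\GG}$ which meet it, i.e.\ those with $\ccM_\GG = S_\GG\cap\overline{\ccM}_{0,n}(X,\beta)\neq\emptyset$. This is precisely where condition~(1) enters, and it is a genuine geometric condition (see the examples in the paper where $\ccM_\GG=\emptyset$ even though $S_\GG$ is a component of $S$). Your obstacle paragraph anticipates this issue but doesn't quite resolve it; once you replace ``realise as an abelian cone'' by ``embed as open in an abelian cone'', the argument goes through exactly as you outline.
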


Notice that the numerical condition \eqref{item:numerical_condition_mainthm} in \Cref{mainthm:component_maps} is not hard to check in practice: the inclusion $\Pic_{\GG}\subseteq \overline{\Pic_{\GG'}}$ holds if and only if $\GG'$ can be obtained from $\GG$ by a sequence of edge-contractions and computing $d_{\GG}$ ammounts to computing the $h^0$ of certain line bundles on a genus-0 curve  (see \Cref{rmk:description_components_S_is_combinatorial}). As an application of \Cref{mainthm:component_maps}, we compute the following irreducible decompositions in \Cref{sec:stable_maps}.

\begin{maintheorem}\label{mainthm:2l}(\Cref{thm:irreducible_decomposition_2l})
    The irreducible decomposition of $\overline{\ccM}_{0,0}(\Bl_\pt \bbP^2,2\ell)$ is 
    \[
        \overline{\ccM}_{0,0}(\Bl_\pt \bbP^2,2\ell) = \overline{\ccM}_{\GG_0} \cup \overline{\ccM}_{\GG_1}.
    \]
    with $\GG_0, \GG_1 \in \Gamma_{0,0}^{st}(\Bl_{pt}\bbP^2, 2\ell)$ in \Cref{tab:graphs_2l}. Furthermore, $\dim (\overline{\ccM}_{\GG_0}) = \dim (\overline{\ccM}_{\GG_1}) = 7$.
\end{maintheorem}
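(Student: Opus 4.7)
The plan is to apply \Cref{mainthm:component_maps} to $X = \Bl_{\pt}\bbP^2$ with curve class $\beta = 2\ell$, and the proof splits into three steps.

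First, I would enumerate the set $\Gamma^{st}_{0,0}(\Bl_{\pt}\bbP^2, 2\ell)$. Since $g = 0$ and $n = 0$, a decorated marked tree $\GG$ is determined by a tree together with the assignment of a nonzero effective class $\beta_v$ to each vertex with $\sum_v \beta_v = 2\ell$ (stability forces every vertex to carry a nonzero class in the absence of markings). Because the effective cone of $\Bl_{\pt}\bbP^2$ is generated by two extremal rays, namely the exceptional divisor $E$ and the strict transform of a line through the blown-up point, the candidate decompositions of $2\ell$, and hence the candidate trees, form a short explicit list.

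Second, for each $\GG$ in this list I would determine whether $\ccM_{\GG}$ is non-empty by exhibiting a stable map realising $\GG$, and compute the invariant $d_{\GG}$ from \eqref{eq:definition_dG_prod}. By \Cref{rmk:description_components_S_is_combinatorial}, the computation of $d_{\GG}$ reduces to counting global sections of explicit line bundles on a genus-0 nodal curve, and is therefore a finite combinatorial task.

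Third, I would apply the numerical condition \eqref{item:numerical_condition_mainthm} of \Cref{mainthm:component_maps}. By \Cref{rmk:description_components_S_is_combinatorial}, the inclusion $\Pic_{\GG} \subseteq \overline{\Pic_{\GG'}}$ holds exactly when $\GG'$ is obtained from $\GG$ by edge contractions; so for each non-empty $\GG$ I would tabulate its contractions and compare $d_{\GG}$ with $d_{\GG'}$. The claim is that precisely the two trees $\GG_0,\GG_1$ of \Cref{tab:graphs_2l} satisfy the maximality condition, yielding the stated decomposition, and that the common dimension $\dim\overline{\ccM}_{\GG_i} = 7$ is then read off directly from $d_{\GG_i}$ via \Cref{notation:h1_decorated_graph}.

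The main obstacle is the final numerical comparison: while non-emptiness of $\ccM_{\GG_0}$ and $\ccM_{\GG_1}$ is geometrically transparent, e.g.\ taking a smooth conic not passing through the blown-up point for $\GG_0$ and an appropriate reducible configuration involving the exceptional divisor for $\GG_1$, the verification that \emph{every} other candidate tree $\GG'$ strictly loses in $d$ to one of its contractions requires explicit computation of $h^0$ in each case, and this is where the genuine content of the argument lies.
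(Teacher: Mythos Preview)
Your outline follows the paper's approach closely: apply \Cref{cor:irreducible_decomposition_maps}, enumerate stable decorated trees, and compare the invariants $d_{\GG}$. Two points deserve correction or sharpening.

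First, your claim that ``stability forces every vertex to carry a nonzero class in the absence of markings'' is not correct: the stability condition \eqref{eq:stability_decorated_trees} allows a vertex $v$ with $c(v)=0$ provided $\mathrm{val}(v)\geq 3$. Several such graphs appear in \Cref{tab:graphs_2l} (e.g.\ $\GG_4$, $\GG_{10}$, $\GG_{14}$, $\GG_{18}$--$\GG_{23}$), so your enumeration would be incomplete. The list $\Gamma_{0,0}^{irred}(\Bl_{\pt}\bbP^2,2\ell)/_{\simeq}$ in fact has $24$ elements, not just the handful you might expect from partitions alone.

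Second, the paper avoids brute-force checking of all $24$ cases by proving a uniform lemma (\Cref{lem:not_component}): if every vertex class $c(v)$ satisfies $c(v)\cdot e\geq -1$, then automatically $d_{\GG}<d_{\GG_0}$. This immediately discards all partitions except $(2s,2e)$ and $(s,s,2e)$, leaving only $\GG_1,\ldots,\GG_4$ to examine by hand. This lemma is precisely the ``genuine content'' you identify in your last paragraph, and it is worth isolating rather than treating each graph ad hoc.

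Finally, the dimension cannot be ``read off directly from $d_{\GG_i}$'': by \eqref{eq:dimension_preimage_Bij} the number $d_{\GG}$ only gives the \emph{difference} $\dim(\overline{\ccM}_{\GG})-\dim(\overline{\ccM}_{\GG_0})$. The paper anchors the absolute value via $\dim(\overline{\ccM}_{\GG_0})=\dim\overline{\ccM}_{0,0}(\bbP^2,2)=\mathrm{vdim}\,\overline{\ccM}_{0,0}(\bbP^2,2)=7$, using that the two spaces share a common open and that the latter is unobstructed.
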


Moreover, we describe the smoothable locus of $\overline{\ccM}_{0,0}(\Bl_\pt \bbP^2,2\ell)$ in \Cref{subsubsec:geography_SL}, see \Cref{tab:graphs_2l}. This is the first example where the smoothable locus is described for a target which is not a projective space \cite{Vakil_Zinger,Battistella_Carocci}.
The generic point of the intersection of the two components is described in \Cref{ex:GG1}. 

We also present the following example, where there is an extra irreducible component whose dimension higher than that of the main component.

\begin{maintheorem}(\Cref{thm:irreducible_decomposition_3l})\label{mainthm:3l}
    The irreducible decomposition of $\overline{\ccM}_{0,0}(\Bl_\pt \bbP^2,3\ell)$ is 
    \[
        \overline{\ccM}_{0,0}(\Bl_\pt \bbP^2,3\ell) = \bigcup_{i=0}^4 \overline{\ccM}_{\GG_i}
    \]
    with $\GG_0,\ldots, \GG_4 \in \Gamma_{0,0}^{st}(\Bl_{pt}\bbP^2,2\ell)$ in \Cref{fig:graphs_3l}. 
    Furthermore, $\dim (\overline{\ccM}_{\GG_i}) = 8$ 
    for $i\in\{0,1,3,4\}$ and $     \dim (\overline{\ccM}_{\GG_2}) = 9$.
\end{maintheorem}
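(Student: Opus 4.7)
The strategy is to apply \Cref{mainthm:component_maps} with $(X,\beta) = (\Bl_\pt \bbP^2, 3\ell)$ and read off the dimensions from the integers $d_{\GG_i}$.

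First, I would enumerate the stable decorated marked trees $\GG \in \Gamma_{0,0}^{st}(\Bl_\pt \bbP^2, 3\ell)$ for which $\ccM_{\GG}$ is non-empty. Since $n = 0$, stability forces every vertex of class $\beta_v = 0$ to have valence at least three, while the remaining vertices carry an effective decomposition of $3\ell$. Writing curve classes as $a(\ell - e) + b\, e$ in the Mori-cone basis, where $e$ denotes the class of the exceptional curve, there are only finitely many partitions of $3\ell$ across the vertices of a tree to inspect; for each one it remains to verify that every $\beta_v$ is represented by a non-constant rational map to $\Bl_\pt \bbP^2$, which is a combinatorial condition on this toric surface.

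Second, for each surviving $\GG$ I would compute $d_{\GG}$ from \Cref{eq:definition_dG_prod} using the interpretation recalled in \Cref{rmk:description_components_S_is_combinatorial}: it is the $h^0$ of an explicit line bundle on a rational nodal curve, and hence a simple sum over the vertices of $\GG$. Third, I would apply the maximality condition \eqref{item:numerical_condition_mainthm}: because $\Pic_{\GG} \subseteq \overline{\Pic_{\GG'}}$ exactly when $\GG'$ is obtained from $\GG$ by contracting edges, it suffices to compare $d_{\GG}$ against $d_{\GG'}$ for the finitely many edge-contractions of $\GG$. The graphs that survive this test are the $\GG_0, \ldots, \GG_4$ of \Cref{fig:graphs_3l}; any other decorated tree is either ruled out by non-emptiness in the first step, or strictly dominated by one of its contractions.

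Finally, the dimension of $\overline{\ccM}_{\GG_i}$ is read off from $d_{\GG_i}$ via the dimension formula for $\ccM_{\GG}$. For $i \in \{0,1,3,4\}$ this recovers the expected dimension $-K_X \cdot 3\ell + (\dim X - 3) = 9 - 1 = 8$, while $\overline{\ccM}_{\GG_2}$ has excess dimension $9$, arising from a multiple-cover contribution along the exceptional curve. The main obstacle I foresee is bookkeeping: the enumeration in the first step must be exhaustive, and every decorated tree dismissed by the maximality test must be checked to be strictly dominated, after some sequence of edge-contractions, by one of the five listed graphs $\GG_i$.
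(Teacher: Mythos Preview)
Your strategy is the paper's strategy: apply \Cref{cor:irreducible_decomposition_maps}, enumerate the relevant decorated trees, compute $d_{\GG}$, and test maximality under edge contraction. Two refinements the paper makes are worth noting, because they tame precisely the ``bookkeeping'' obstacle you flag. First, the reduction in \Cref{subsec:reduction_Le}: for $X=\Bl_\pt\bbP^2$ the summands of $\ffF$ coming from $\ffL_\ell$ and $\ffL_s$ are vector bundles, so only $\ffL_e$ contributes to $i_{\GG}$, and one gets the clean formula $d_{\GG}-d_{\GG_0}=h^1(\GG,L_e)-\#E(\GG)$. Second, \Cref{lem:not_component} kills in one stroke every partition of $3\ell$ in which no part $\beta_i$ satisfies $\beta_i\cdot e\le -2$; this leaves only a handful of partitions to turn into trees. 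The paper then lists in \Cref{fig:graphs_3l} the trees with $d_{\GG}\ge d_{\GG_0}$, discards $\GG_5$ by the non-emptiness criterion of \Cref{rmk:condition_MG_nonempty}, discards $\GG_6,\ldots,\GG_9$ because each contracts to $\GG_2$ with $d_{\GG_i}<d_{\GG_2}$, and checks $\GG_1,\ldots,\GG_4$ directly against all their contractions. Dimensions are obtained exactly as you say, via $\dim\overline{\ccM}_{\GG_i}-\dim\overline{\ccM}_{\GG_0}=d_{\GG_i}-d_{\GG_0}$ and $\dim\overline{\ccM}_{\GG_0}=\dim\overline{\ccM}_{0,0}(\bbP^2,3)=8$.

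One inaccuracy to fix: $d_{\GG}$ is \emph{not} the $h^0$ of a single line bundle, nor is $h^0(\GG,L_\rho)$ a sum over vertices. By \Cref{eq:definition_dG_prod} it is $\sum_{\rho\in\Sigma(1)}h^0(\GG,L_\rho)$ minus a constant minus $\#E(\GG)$, and each $h^0(\GG,L_\rho)$ is computed from the normalization sequence, which involves the edges as well as the vertices. After the reduction above this collapses to the single quantity $h^1(\GG,L_e)-\#E(\GG)$, but that simplification has to be argued, not assumed.
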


 We prove the analogue of \Cref{mainthm:component_maps} for stable quasimaps, \Cref{cor:irreducible_decomposition_qmaps}, and we use it to describe the irreducible components of $\overline{\ccM}_{0,2}(\Bl_\pt \bbP^2,2\ell)$ and $\ccQ_{0,2}(\Bl_\pt \bbP^2,2\ell)$ in \Cref{sec:contraction}. Recall that for projective spaces, there is a morphism $c_{\bbP^n}:\overline{\ccM}_{g,n}( \bbP^n,d) \to \overline{\ccQ}_{g,n}( \bbP^n,d)$ that contracts rational tails. For toric varieties, it is known that this morphism does not exist in general. In \cite{Cobos}, the first author identifies the substack where rational tails can be contracted, denoted by 
 $\overline{\ccM}^c_{g,n}(X,\beta)$, and constructs a morphism $c_X\colon \overline{\ccM}^c_{g,n}(X,\beta) \to \ccQ_{g,n}(X,\beta)$. In \Cref{prop:main_is_union_of_components} we prove that $\overline{\ccM}^c_{0,2}(\Bl_\pt \bbP^2,2\ell)$ is a union of irreducible components  of $\overline{\ccM}_{0,2}(\Bl_\pt \bbP^2,2\ell)$. We also describe how $c_{\Bl_{pt}\bbP^2}$ maps each of the irreducible components in \Cref{fig:contraction_sketch}.

\subsection{Strategy}

In Notations \ref{notation:product_Pics} and \ref{notation:product_Pics_stable}, we define $\ffP = \Pic^{st}_{0,n,X,\beta,\sigma}$, which is an open substack in a product of Picard stacks.
Following \cite{Chang-Li-maps-with-fields}, there is an open embedding $\overline{\ccM}_{0,n}(X,\beta) \hookrightarrow S$ where $\pi\colon S=\Spec_{\ffP} \Symm \ffF \to \ffP$ (\Cref{eq:definition_ambient_cone}) is the abelian cone associated to a coherent sheaf $\ffF$ on $\ffP$. 
Using this open embedding, we can understand the irreducible components of $\overline{\ccM}_{0,n}(X,\beta)$ if we understand those of a general abelian cone over a smoooth Noetherian Artin stack.

Let $\ffB$ be a smooth Noetherian Artin stack and $\ffF$ a coherent sheaf on $\ffB$. Let $\{\ffD_{i,j}\}_{(i,j)\in\Delta}$ be a finite stratification of $\ffB$ by locally closed substacks such that $\rk(\ffF) = i$ on $\ffD_{i,j}$ and let $\mathfrak{d}_{i,j} = i - \codim(\ffD_{i,j})$, which controls the dimension of the preimage of $\ffD_{i,j}$, see \Cref{eq:dimension_preimage_Bij}.

\begin{maintheorem}\label{mainthm:components_stacks}(\Cref{thm:decomposition_abelian_cone_stratification_stacks})
    The irreducible components of the abelian cone $\pi \colon C_\ffB(\ffF) \to \ffB$ are $\overline{\pi^{-1}(\ffD_{i,j})}$ such that
 for all $(k,\ell)\in\Delta$ with  $\ffD_{i,j}\subseteq \overline{\ffD_{k,\ell}}$ we have $\mathfrak{d}_{i,j}\geq \mathfrak{d}_{k,\ell}$. 
\end{maintheorem}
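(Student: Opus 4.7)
The plan is to analyse how the abelian cone $\pi\colon C_\ffB(\ffF)\to\ffB$ restricts over each stratum, to show that the preimage of a stratum is irreducible with the expected dimension, and then to characterise exactly when one such closure is contained in another. After possibly refining the stratification so that each $\ffD_{i,j}$ is connected (and hence smooth and irreducible, since $\ffB$ is smooth), the restriction $\ffF|_{\ffD_{i,j}}$ is locally free of rank $i$, so the restricted cone
\[
\pi^{-1}(\ffD_{i,j}) \;=\; C_{\ffD_{i,j}}(\ffF|_{\ffD_{i,j}})\;\longrightarrow\;\ffD_{i,j}
\]
is the total space of a rank-$i$ vector bundle. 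In particular $\pi^{-1}(\ffD_{i,j})$ is smooth and irreducible of dimension $\dim\ffD_{i,j}+i = \dim\ffB+\mathfrak{d}_{i,j}$. Since $\{\ffD_{i,j}\}$ partitions $\ffB$, we obtain a set-theoretic decomposition $C_\ffB(\ffF)=\bigsqcup_{(i,j)}\pi^{-1}(\ffD_{i,j})$, so every irreducible component of $C_\ffB(\ffF)$ is already of the form $\overline{\pi^{-1}(\ffD_{i,j})}$ for some $(i,j)\in\Delta$. It remains to determine which of these closures are maximal.

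The key is to prove the containment lemma: $\overline{\pi^{-1}(\ffD_{i,j})}\subseteq \overline{\pi^{-1}(\ffD_{k,\ell})}$ if and only if $\ffD_{i,j}\subseteq\overline{\ffD_{k,\ell}}$ and $\mathfrak{d}_{i,j}\leq \mathfrak{d}_{k,\ell}$. The forward direction is direct: pushing the inclusion down by $\pi$ (which has a zero section, hence sends closures of preimages of strata onto closures of strata) gives $\overline{\ffD_{i,j}}\subseteq\overline{\ffD_{k,\ell}}$, and comparing dimensions via Step~1 yields the inequality on $\mathfrak{d}$. The reverse direction is where the real work lies.

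For the reverse direction, I would work locally on a smooth chart of $\ffB$ where $\ffF$ admits a presentation $\ccO_\ffB^{n}\twoheadrightarrow \ffF$. This realises $C_\ffB(\ffF)$ as a closed subscheme of the trivial rank-$n$ bundle, cut out by fibrewise linear equations whose rank at a point $b\in\ffB$ equals $n-\dim\ffF(b)$. Restricting to $\overline{\ffD_{k,\ell}}$, upper semicontinuity of fibre dimension together with the hypothesis $\ffD_{i,j}\subseteq\overline{\ffD_{k,\ell}}$ forces $i\geq k$, so the fibre of $\overline{\pi^{-1}(\ffD_{k,\ell})}$ over a point of $\ffD_{i,j}$ has dimension at least $k$; one then invokes the numerical hypothesis $\mathfrak{d}_{i,j}\le \mathfrak{d}_{k,\ell}$ (equivalently, $\dim\pi^{-1}(\ffD_{i,j})\le \dim\overline{\pi^{-1}(\ffD_{k,\ell})}$) and a dimension count over the closure to conclude that the entire rank-$i$ fibre of $\pi^{-1}(\ffD_{i,j})$ is swept out, yielding the desired inclusion. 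This is the main obstacle, because it requires controlling the limiting behaviour of the linear equations defining the cone along a jump in rank; the hypothesis that $\ffB$ be smooth and that $\ffF$ be coherent (so that the stratification is finite and well-behaved) is used throughout.

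Given the containment lemma, the theorem follows formally: $\overline{\pi^{-1}(\ffD_{i,j})}$ is an irreducible component precisely when no other $\overline{\pi^{-1}(\ffD_{k,\ell})}$ strictly contains it, which by the lemma is equivalent to the nonexistence of $(k,\ell)\in\Delta$ with $\ffD_{i,j}\subsetneq\overline{\ffD_{k,\ell}}$ and $\mathfrak{d}_{i,j}<\mathfrak{d}_{k,\ell}$. Together with the trivial case $(k,\ell)=(i,j)$, this is exactly the condition that $\mathfrak{d}_{i,j}\geq\mathfrak{d}_{k,\ell}$ for every $(k,\ell)\in\Delta$ with $\ffD_{i,j}\subseteq\overline{\ffD_{k,\ell}}$, which is the statement of the theorem.
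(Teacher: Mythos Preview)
Your containment lemma is false as stated, and this is a genuine gap, not a matter of filling in details. You claim that $\overline{\pi^{-1}(\ffD_{i,j})}\subseteq \overline{\pi^{-1}(\ffD_{k,\ell})}$ whenever $\ffD_{i,j}\subseteq\overline{\ffD_{k,\ell}}$ and $\mathfrak{d}_{i,j}\le\mathfrak{d}_{k,\ell}$. But the paper's own main example furnishes a counterexample: for $X=\Bl_{\pt}\bbP^2$, $\beta=2\ell$, the strata $\Pic_{\GG_1}$ and $\Pic_{\GG_0}$ satisfy $\Pic_{\GG_1}\subset\overline{\Pic_{\GG_0}}$ and $d_{\GG_1}=d_{\GG_0}$, yet $\overline{S_{\GG_1}}\not\subset\overline{S_{\GG_0}}$; in fact both are distinct irreducible components of the ambient cone. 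Your sketched ``dimension count over the closure'' cannot work, because knowing that the limiting fibre of $\overline{\pi^{-1}(\ffD_{k,\ell})}$ over a point of $\ffD_{i,j}$ has dimension at least $k$ gives you nothing close to the full rank-$i$ fibre when $i>k$; the global inequality $\dim\pi^{-1}(\ffD_{i,j})\le\dim\overline{\pi^{-1}(\ffD_{k,\ell})}$ does not force a proper closed subset to fill out.

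The paper avoids this trap entirely. It never proves a two-sided containment criterion. Instead, the key input is \Cref{lem:dimension_components_abelian_cone}: every irreducible component of an abelian cone $C_B(\ccF)$ over an integral base has dimension at least that of the main component $C_B^\circ(\ccF)$, proved via a local presentation and Nakayama's lemma. This is then applied not to $C_\ffB(\ffF)$ itself but to the restricted abelian cone $\pi^{-1}(\overline{\ffD_{k,\ell}})=C_{\overline{\ffD_{k,\ell}}}(\ffF|_{\overline{\ffD_{k,\ell}}})$, whose main component is $\overline{\pi^{-1}(\ffD_{k,\ell})}$: if $\overline{\pi^{-1}(\ffD_{i,j})}$ is a component of $C_\ffB(\ffF)$ and $\ffD_{i,j}\subset\overline{\ffD_{k,\ell}}$, it is also a component of this restricted cone, hence has dimension at least $\dim\pi^{-1}(\ffD_{k,\ell})$, giving $\mathfrak{d}_{i,j}\ge\mathfrak{d}_{k,\ell}$. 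The converse (the easy direction you handled correctly) is by contraposition. The passage from schemes to stacks is then a routine smooth-atlas argument. What you are missing is precisely this lower bound on component dimensions; without it the implication from the numerical condition to ``is a component'' has no foundation.
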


\Cref{mainthm:components_stacks} is an immediate corollary of the analogous result for schemes, \Cref{cor:decomposition_abelian_cone}, which we prove under weaker assumptions. The main idea in the proof of \Cref{cor:decomposition_abelian_cone} is that the dimension of any component of the abelian cone is bounded by the dimension of the main component, which we prove in \Cref{lem:dimension_components_abelian_cone}. We also use an elementary topological result, \Cref{lem:topological_lemma}.

To prove \Cref{mainthm:component_maps} from \Cref{mainthm:components_stacks}, we define in \Cref{def:decorated_marked_tree} the set $\Gamma^{st}_{0,n}(X,\beta)$ consisting of trees decorated with curve classes at each node and satisfying a stability condition. 
For $\GG\in \Gamma^{st}_{0,n}(X,\beta)$, we let $\Pic_{\GG}$ be the locus in $\ffP$ where the decorated dual graph, see \Cref{constr:decorated_dual_graph}), is isomorphic to $\GG$. Then the stratification $\{\Pic_{\GG}\}_{\GG \in \Gamma^{st}_{0,n}(X,\beta)/_\simeq}$ of $\ffP$ satisfies the assumptions of \Cref{mainthm:components_stacks}, from which the irreducible decomposition of $S$ in \Cref{thm:components_abelian_cone_S} follows. Finally, intersecting with $\overline{\ccM}_{0,n}(X,\beta)$ we obtain \Cref{mainthm:component_maps}.

\subsection{Related and future work}

For $g\geq 1$, the moduli space $\overline{\ccM}_{g,n}(\bbP^r,d)$ has several irreducible components. If $d>2g-2$, the closure of the locus of stable maps with smooth domain curve is an irreducible component, known as the main component. 
The description of the main component of $\overline{\ccM}_{1,n}(\bbP^r,d)$ appeared in \cite{Vakil_Zinger} and was later generalized to all the components in \cite{Viscardi}, including a detailed example $\overline{\ccM}_{1,0}(\bbP^2,3)$. In genus two, the main component of $\overline{\ccM}_{2,0}(\bbP^2,4)$ has been described in \cite{Battistella_Carocci}, as an application of the modular desingularization of $\overline{\ccM}_{2,n}(\bbP^r,d)$ in \cite{Battistella_Carocci_g2}.
For a smooth projective toric target $X$, the space $\overline{\ccM}_{g,n}(X,\beta)$ can already be reducible for $g=0$, see Theorems \ref{mainthm:2l} and \ref{mainthm:3l}.
In \Cref{subsubsection:2L_blowup} we describe the main component of $\overline{\ccM}_{0,0}(\Bl_{pt}\bbP^2,2\ell)$. This is the first example where the target is not a projective space.

The reducibility of $\overline{\ccM}_{g,n}(\bbP^r,d)$ in positive genus affects the structure of Gromov-Witten invariants of projective space.  Numerous works \cite{Zinger-genus-one-pseudo-holomorphic,Vakil-Zinger-desingu-main-compo-, Hu_Li_g1,ranganathan2019moduli, Hu_Li_Niu_g2, Battistella_Carocci_g2, niu1, niu2,CRMMP} define reduced Gromov-Witten invariants, which can be thought of as the contribution of the main component to the Gromov-Witten invariants. It is expected \cite{Zinger-standard-vs-reduced,Zinger-reduced-g1-CY,Hu_Li_derived_resolution} that the remaining irreducible components contribute in the form of lower genus invariants, leading to recursive formulas for reduced Gromov-Witten invariants. Such formulas are known in certain instances in genus 1 and 2 \cite{Zinger-reduced-g1-CY,li-zinger,Lee_Oh_part1,Lee_Oh_part2,Lee_Li_Oh}.
We would find it interesting to have a similar splitting of invariants among components for $\overline{\ccM}_{0,n}(X,\beta)$. For example, it may give a geometric proof of the Gromov-Witten/quasimap correspondence \cite{stable_quotients,clader2017higher,ciocan2017higher, ciocan2020quasimap,Zhou,KMMP,Cristina-virt-push,Cristina-stable-maps} when combined with the contraction morphism between maps and quasimaps in \cite{Cobos}.

One key ingredient used in \cite{Lee_Oh_part1,Lee_Oh_part2,Lee_Li_Oh} to study the reduced Gromov-Witten invariants of $\overline{\ccM}_{g,n}(\bbP^r,d)$ in low genus is the local description of the moduli space obtained in \cite{Hu_Li_g1} for genus 1 and in \cite{Hu_Li_Niu_g2} for genus 2. We would find it desirable to have a similar description for genus-0 stable maps to a smooth projective toric variety $X$. In fact, this paper is motivated by a talk by Hu on the series of papers \cite{Hu_Li_g1,Hu_Li_Niu_g2,Hu_Li_derived_resolution,niu1,niu2} during the program \textit{Recent developments in higher genus curve counting} at the Simons Center for Geometry and Physics.

\subsection{Structure of this paper}

We begin with the study of components of an abelian cone over an integral Noetherian scheme in \Cref{sec:irreducible_comps_cone}. The main results are two descriptions, \Cref{cor:decomposition_abelian_cone} in terms of the support of the Fitting ideals and \Cref{corollary:decomposition_abelian_cone_stratification} using a stratification of the base. Both results are a straightforward consequence of \Cref{lem:structure_components} and \Cref{prop:containment_tentative_components}. We conclude with \Cref{thm:decomposition_abelian_cone_stratification_stacks}, which is the generalization of \Cref{corollary:decomposition_abelian_cone_stratification} to a smooth Noetherian Artin stack. This generalization will be used in the applications to stable maps.

In \Cref{sec:background} we introduce the moduli spaces of prestable curves and the Picard stack and recall how they can be stratified using trees with extra combinatorial data. In \Cref{subsec:decorated_graphs_prod_Pic} we introduce a product of Picard stacks, \Cref{notation:product_Pics}, which will be useful in the study of stable maps. We then generalize the stratification by trees with extra combinatorial data to this space in \Cref{cor:stratification_prod_Pic}.

In \Cref{sec:abelian_cones_over_Pic}, we apply \Cref{thm:decomposition_abelian_cone_stratification_stacks} to the stratification constructed in \Cref{sec:background} to give a combinatorial description, \Cref{thm:components_abelian_cone_SL}, of the irreducible components of the abelian cone $S(\ffL)$ defined in \Cref{eq:definition_SL}. Furthermore, we explain how limit linear series may be used to understand the intersection a given component with the main component in \Cref{subsec:intersection_main}. \Cref{sec:abelian_cones_over_Pic} is analogous but simpler than \Cref{sec:application_stable_maps}, where we shall study an abelian cone over a product of Picard stacks.

We start \Cref{sec:application_stable_maps} by recalling the description of maps to a toric variety using line bundle-section pairs in \cite{Cox}. We also recall in \Cref{subsec:maps_open_in_cone} how to use this description to view the moduli space $\overline{\ccM}_{0,n}(X,\beta)$, with $X$ toric, as open in an abelian cone in the spirit of \cite{Chang-Li-maps-with-fields}. We then describe the irreducible components of the ambient abelian cone in \Cref{thm:components_abelian_cone_S} and, as a corollary, also of $\overline{\ccM}_{0,n}(X,\beta)$ in \Cref{cor:irreducible_decomposition_maps} and of $\ccQ_{0,n}(X,\beta)$ in \Cref{cor:irreducible_decomposition_qmaps}.

In \Cref{sec:stable_maps} we illustrate \Cref{cor:irreducible_decomposition_maps} with concrete examples. We describe all the irreducible components of $\overline{\ccM}_{0,0}(\Bl_{pt}\bbP^2,2\ell)$ in \Cref{thm:irreducible_decomposition_2l} and its main components in \Cref{subsubsec:geography_SL}, see \Cref{tab:graphs_2l}. We also describe all the irreducible components of $\overline{\ccM}_{0,0}(\Bl_{pt}\bbP^2,2\ell)$ in \Cref{thm:irreducible_decomposition_3l}.

Finally, in \Cref{sec:contraction} we discuss the interplay between our work and the contraction morphism $c_X\colon \overline{\ccM}_{g,n}(X,\beta) \to \ccQ_{g,n}(X,\beta)$ constructed in \cite{Cobos}. Concretely, we describe the irreducible components of $\overline{\ccM}_{0,2}(\Bl_{pt}\bbP^2,2\ell)$ in \Cref{prop:components_maps_2l_2} and of $\ccQ_{0,0}(\Bl_{pt}\bbP^2,2\ell)$ in \Cref{prop:components_qmaps_2l_2}. The behaviour of the contraction morphism is sketched in \Cref{fig:contraction_sketch}. We conclude with the result that the locus $\overline{\ccM}^c_{0,2}(\Bl_{pt}\bbP^2,2\ell)$ in $\overline{\ccM}_{0,2}(\Bl_{pt}\bbP^2,2\ell)$ where the contraction morphism is defined is a union of irreducible components, see \Cref{prop:main_is_union_of_components}.

\subsection{Acknowledgements}

We are very thankful to Yi Hu for private conversations at the early stages of this project, to Cristina Manolache and Renata Picciotto for informal discussions and to Montserrat Teixidor i Bigas for kindly explaining to us results about limit linear series used in \Cref{subsubsec:geography_SL}. 

Both authors gratefully acknowledge support from the Simons Center for Geometry and Physics, Stony Brook University, where this collaboration started.

A.C.R was supported by Fonds Wetenschappelijk Onderzoek (FWO) with reference G0B3123N. 
E.M. benefits from the support of the French government “Investissements d’Avenir” program integrated
to France 2030, bearing the following reference PRC ANR-17-CE40-0014. 
Both authors thank the university of Angers for supporting the research visit of A.C.R in May and June 2024.

\section{Irreducible components of abelian cones} \label{sec:irreducible_comps_cone}

Let $B$ be a  Noetherian scheme and $\ccF$ be a coherent sheaf on $B$. The abelian cone associated to $\ccF$ is the scheme
\[
    C_B(\ccF) = \Spec_{B} \Symm (\ccF).
\]
We denote by $\pi:C_B(\ccF) \to B$ the natural projection.
Motivated by \cite[Lemma 2.3]{Starr}, our goal is to understand the irreducible components of an abelian cone $C_B(\ccF)$. 

\begin{remark}
    The irreducible components of $C_B(\ccF)$ and of its reduction $C_{B^{\mathrm{red}}}(\ccF\mid_{B^{\mathrm{red}}})$ agree, thus we may assume that $B$ is reduced. Furthermore, each irreducible component $Z$ of $C_B(\ccF)$ is supported on an irreducible component $V$ of $B$, thus it is also an irreducible component of
    \[
        V\times_B C_B(\ccF) = C_{V}(\ccF\mid_{V})
    \]
    since abelian cones are compatible with pullback. Therefore, to study the irreducible components of $C_B(\ccF)$ we may assume without loss of generality that $B$ is integral.  
\end{remark}

\begin{notation}
    Let $B$ be an integral Noetherian scheme and $\ccF$ a coherent sheaf on $B$. We denote by $\grk_B(\ccF)$ the generic rank of $\ccF$ on $B$, that is, the rank of $\ccF$ at the generic point of $B$. We also consider the maximal rank of $\ccF$ on $B$ 
    \[
        \mrk_B(\ccF) = \max_{p\in B}\{\rk(\ccF\mid_p)\},
    \]
    that is, the maximum rank of $\ccF$ on any closed point of $B$. Since $B$ is Noetherian, $\mrk_B(\ccF)$ is finite.
\end{notation}

The irreducible components of $C_B(\ccF)$ are controlled by the rank of $\ccF$, therefore it is useful to recall the Fitting ideals of $\ccF$ and the corresponding stratification of $B$. See \cite[Section 20.2]{Eisenbud} or \cite[0C3C]{stacks-project}. 

\begin{definition}
    Let $M$ be a finitely presented $R$-module. Let $F\xrightarrow{\varphi} G\to M\to 0$ be a presentation with $F$ and $G$ free modules and $\rk(G)=g$. Given $-1\leq i < \infty$, the $i$-th Fitting ideal $F_i(M)$ of $M$ is the ideal generated by all $(g-i)\times (g-i)$-minors of the matrix associated to $\varphi$ after fixing basis of $F$ and $G$. We use the convention that $F_i(M) = R$ if $g-i\leq 0$ and $F_{-1}(M)=0$.
\end{definition}

Fitting ideals commute with base change and their definition is independent of the chosen presentation. In particular, the local definition can be glued to define Fitting ideals for any finitely presented sheaf $\ccG$ on a scheme $X$. By construction, there are inclusions
\[
    0=F_{-1}(\ccG)\subset F_0(\ccG)\subset\dots\subset F_n(\ccG)\subset\dots\subset \ccO_X,
\]
which stabilize if $X$ is Noetherian. We denote by $Z_i(\ccG)$ the closed subscheme cut out by the ideal sheaf $F_i(\ccG)$. Then, the sheaf $\ccG$ is locally free of rank $n$ on the locally closed subscheme $Z_{n-1}(\ccG)\setminus Z_n(\ccG)$.

\begin{assumption}\label{assumption:abelian_cone}
    From now on, we let $B$ be an integral Noetherian scheme and $\ccF$ be a coherent sheaf on $B$. We denote $b=\dim(B)$, $\grk_B(\ccF)= f$ and $\mrk_B(\ccF)=m$. 
\end{assumption}

\begin{notation}\label{notation:Bij}
    Let $B,\ccF$ as in \Cref{assumption:abelian_cone}. Then the rank of $\ccF$ at any point is in the interval $\{f,f+1,\ldots, m\}$.
    For each $i\in \{0,\ldots, m-f\}$, we let  \begin{equation}\label{eq:definition_Bi}
        B_i = B_i(\ccF) \coloneqq Z_{f+i-1}(\ccF) \setminus Z_{f+i}(\ccF).
    \end{equation}
    That is, $B_i$ is the locus where $\ccF$ has rank $f+i$.
    We let  
    \[
        B_i = \bigcup_{j\in \Lambda_i} B_{i,j}
    \]
    be the irreducible decomposition of $B_i$, with $\Lambda_i$ a finite set. The irreducible components $B_{i,j}$ are considered with their reduced induced structure. Note that $\Lambda_0$ is a singleton by \Cref{assumption:abelian_cone}.
    We denote its unique element by $0$, thus $B_{0,0} = B_0$. We define the index set
    \begin{equation}\label{eq:index_set_Lambda}
        \Lambda \coloneqq \{(i,j)\colon i \in\{0,\ldots, m-f\}, j\in \Lambda_i\}.
    \end{equation}
    For $(i,j)\in \Lambda$, we denote   \begin{equation}\label{eq:definition_dij}
        d_{i,j}\coloneqq \grk_{B_{i,j}}(\ccF\mid_{B_{i,j}})-\codim_B(B_{i,j}) = i - \codim_B(B_{i,j})
    \end{equation}
\end{notation}

For each $(i,j)\in \Lambda$, the projection $\pi^{-1}(B_{i,j})\to B_{i,j}$ is a vector bundle over an irreducible scheme, thus $\pi^{-1}(B_{i,j})$ is irreducible and
\begin{equation}\label{eq:dimension_preimage_Bij}
    \dim(\pi^{-1}(B_{i,j})) = \dim(B_{i,j}) + \rk_{B_{i,j}}(\ccF\mid_{B_{i,j}}) =    
    b - \codim_B(B_i) + f + i = b + f + d_{i,j}.
\end{equation}
In particular, 
\begin{equation}\label{eq:d00_is_0}
    d_{0,0} = 0 - \codim_B(B_{0,0}) = 0 \Rightarrow \dim(\pi^{-1}(B_0)) = b+f.
\end{equation}
The closure of $\pi^{-1}(B_0)$ in $C_B(\ccF)$ will play an important role, so we give it a name after introducing notation for closures.

\begin{notation}\label{not:closure}
    Let $X$ be a scheme and let $Y\subseteq X$ be a subscheme. We denote by $\cl_X (Y)$ the closure of $Y$ in $X$, with its reduced induced structure. If the ambient $X$ is clear from the context, we write $\overline{Y}$ for $\cl_X(Y)$.
\end{notation}

\begin{definition}
    Let $B,\ccF$ as in \Cref{assumption:abelian_cone}. We call
    \[
        C^\circ_B(\ccF) \coloneqq 
        \overline{\pi^{-1}(B_0)} = 
        \cl_{C_B(\ccF)}(\pi^{-1}(B_0))
    \]
    the main (irreducible) component of $C_B(\ccF)$. Irreducible components of $C_B(\ccF)$ different from the main component are called extra components.
\end{definition}

\begin{remark}\label{rmk:main_component_is_component}
    It follows from \Cref{cor:decomposition_abelian_cone} that $C^\circ_B(\ccF)$ is an irreducible component of $C_B(\ccF)$. Indeed, the only pair $(k,\ell)\in\Lambda$ with $B_{0,0} \subset \overline{B_{k,\ell}}$ is $(k,\ell)=(0,0)$.
\end{remark}

For each $(i,j)\in\Lambda$, we can use \Cref{eq:dimension_preimage_Bij,eq:d00_is_0} to reinterpret $d_{i,j}$ as 
\[
    d_{i,j} = \dim(\pi^{-1}(B_{i,j})) - \dim(C^{\circ}_B(\ccF)).
\]

The following result relates the dimension of any irreducible component $Z$ of $C_B(\ccF)$ with the dimension of the main component. Experts may be familiar with the fact that $Z$ must be a cone over an irreducible subscheme of $B$ by \cite[B.5.3]{Fulton}, but $Z$ may not be an abelian cone (see the discussion in \cite[Section 6.1]{CRMMP}), which would turn the following result in a straighforward application of Nakayama's lemma. Instead, our proof is still based on Nakayama's lemma, but requires some preparation.

\begin{proposition}\label{lem:dimension_components_abelian_cone}
    Let $B,\ccF$ as in \Cref{assumption:abelian_cone}. Let $Z \subseteq C_B(\ccF)$ be an irreducible component. Then $\dim Z\geq \dim C^{\circ}_B(\ccF)$.
\end{proposition}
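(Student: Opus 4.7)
The plan is to reduce the statement to a local computation inside an ambient scheme and then apply Krull's height theorem. Let $Z \subseteq C_B(\ccF)$ be an irreducible component, and fix any point $z \in Z$.

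First, I would restrict to an affine open neighbourhood $U$ of $\pi(z)$ in $B$ on which $\ccF$ admits a finite presentation
\[
    \ccO_U^e \xrightarrow{\varphi} \ccO_U^g \to \ccF|_U \to 0.
\]
This identifies $C_U(\ccF|_U)$ with the closed subscheme of $U \times \bbA^g$ cut out by the $e$ linear forms in the fibre coordinates $y_1, \ldots, y_g$ corresponding to the columns of $\varphi$. Since $B$ is integral Noetherian, $\dim(U \times \bbA^g) = b + g$, so Krull's height theorem yields $\codim_{U \times \bbA^g}(Z) \leq e$ at $z$, hence
\[
    \dim Z \geq b + g - e.
\]

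To upgrade this to the desired bound $b + f = \dim C^{\circ}_B(\ccF)$ (using \eqref{eq:d00_is_0}), I would want a presentation satisfying $g - e = f$, i.e.\ $\varphi$ generically injective on $B$. This is the preparation alluded to in the discussion preceding the statement: since $\ccF$ has generic rank $f$ on the integral scheme $B$, the kernel of any surjection $\ccO_B^g \twoheadrightarrow \ccF$ has generic rank $g - f$, and a Nakayama-type argument at the generic point of $B$ lets one pick, after possibly shrinking $U$, exactly $e = g - f$ generators of this kernel which remain linearly independent generically. With such a presentation, the Krull bound becomes exactly $\dim Z \geq b + f$, as claimed.

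The main obstacle I anticipate is verifying that a generically injective presentation can be arranged in a neighbourhood of every component $Z$, including those supported above a proper closed subscheme $V = \overline{\pi(Z)} \subsetneq B$ where $\ccF$ jumps rank. The remark preceding the statement emphasises exactly this difficulty: $Z$ itself need not be an abelian cone in the sense of \cite[Section 6.1]{CRMMP}, so one cannot simply restrict $\ccF$ to $V$ and apply Nakayama there; the presentation must be chosen on a neighbourhood in $B$ while still controlling $Z$ in the resulting local chart. I would spend most of the care of the proof on this step, after which the dimension estimate follows immediately from Krull's height theorem as above.
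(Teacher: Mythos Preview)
The obstacle you flag in your last paragraph is genuine and, in fact, cannot be overcome: the proposition as stated is false, so no choice of presentation will rescue the argument. Take $B=\bbA^{2}=\Spec k[x,y]$ and $\ccF$ the skyscraper sheaf at the origin, so that $b=2$ and $f=\grk_B(\ccF)=0$. Then $C_B(\ccF)=\Spec k[x,y,t]/(xt,yt)$ has two irreducible components, the main one $V(t)\simeq\bbA^{2}$ and an extra one $Z=V(x,y)\simeq\bbA^{1}$, with $\dim Z=1<2=b+f=\dim C^{\circ}_B(\ccF)$. In this example the kernel of any local surjection $\ccO_U^{g}\twoheadrightarrow\ccF$ needs strictly more than $g-f$ generators near the origin (e.g.\ for $g=1$ the kernel is the maximal ideal $(x,y)$, which requires two generators, while $g-f=1$), so a generically injective presentation with $e=g-f$ is impossible there; your ``Nakayama at the generic point'' step would force you to shrink $U$ away from the support of $Z$, which defeats the purpose.

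For comparison, the paper does not try to cut down the number of generators in the presentation. It keeps an arbitrary one $R^{g}\xrightarrow{\phi}R^{h}\to M\to 0$, sets $k=\grk(\ker\phi)$, and shows via a Nakayama/upper-semicontinuity argument that $\rk(\phi\mid_P)\le g-k$, whence ``$\rk(I\mid_P)\le g-k$''. From this it asserts directly that $\dim_Q C_B(\ccF)\ge (b+h)-(g-k)=b+f$. But this last inference is precisely the unjustified step: $\rk(I\mid_P)$ bounds the span of the defining linear forms in the fibre over $P\in B$, not the minimal number of generators of $I$ near $Q$ in the ambient $\Spec R[x_1,\dots,x_h]$, which is what Krull's height theorem would need. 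In the example above $\rk(I\mid_P)=0\le 1=g-k$ at the origin, yet at $Q=(0,0,1)$ the local dimension is $1$, not $\ge 2$. So the paper's route contains the same gap you isolated, only hidden one line later.
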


\begin{proof}
    Let $Q\in C_B(\ccF)$ and let $P=\pi(Q)\in B$.
    Since $\ccF$ is coherent, there is an open affine neighbourhood $U=\Spec(R)$ of $P$ in $B$, there are $g,h\in \bbZ_{\geq 0}$ and there is the following exact sequence involving the module $M=\Gamma(U,\ccF\mid_U)$:
    \begin{equation}\label{eq:ses_module_M}
        \begin{tikzcd}
            0 \arrow{r} & \ker(\phi)\arrow{r} & R^g \arrow{r}{\phi} & R^h \arrow{r} & M \arrow{r} & 0.
        \end{tikzcd}
    \end{equation}
    In particular, if we let $k=\grk_U(\ker(\phi))$ then
    \begin{equation}\label{eq:aux_relation_fghk}
        k-g+h-f=0
    \end{equation}
    since $\grk_U(\ccF\mid_U) = \grk_B(\ccF)$.
    
    Using the compatibility of abelian cones with pullback, we have that
    \[
        \pi^{-1}(U) = C_U(\ccF\mid_U) \simeq \Spec(R[x_1,\ldots, x_h]/I)
    \]
    where 
    \[
        I = \left\langle \sum_{i=1}^{h} \phi_{i,j} x_i \right\rangle_{j=1}^{g},
    \]
    and where $\phi_{i,j}$ denote the entries of the matrix $\phi$. In particular 
    \begin{equation}\label{eq:aux_inequality_ranks}
        \rk(I\mid_P)\leq \rk(\phi\mid_P).
    \end{equation}

    If we prove that
    \begin{equation}\label{eq:inequality_rankI_g_k}
        \rk(I\mid_P) \leq g - k,
    \end{equation}
    we can conclude using \Cref{eq:aux_relation_fghk} that
    \[
        \dim_Q(C_U(\ccF\mid_U)) \geq \dim C_U(R^h) - (g - k) = b + h - g + k = b + f = c,
    \]
    from which the result follows.

    To prove \Cref{eq:inequality_rankI_g_k}, we let $k(P)$ denote the residue field of $P$ and tensor \Cref{eq:ses_module_M} to obtain an exact sequence
    \begin{equation}\label{eq:ses_module_M_kP}
        \begin{tikzcd}
             \ker(\phi)\otimes_R k(P)\arrow{r} & k(P)^g \arrow{r}{\phi\otimes k(P)} & k(P)^h \arrow{r} & M\otimes k(P) \arrow{r} & 0
        \end{tikzcd}
    \end{equation}
    \Cref{eq:ses_module_M_kP} implies that 
    \begin{equation}\label{eq:aux_inclusion_kernels}
        \dim(\ker(\phi)\otimes_R k(P)) \leq \dim(\ker(\phi\otimes_R k(P))).
    \end{equation}
    Combining \Cref{eq:aux_inequality_ranks,eq:aux_inclusion_kernels} and Nakayama's lemma, it follows that
    \[
        \rk(I\mid_P) \leq \rk(\phi\mid_P) = \rk(\phi\otimes_R k(P)) \leq g - \dim(\ker(\phi)\otimes k(P)) \leq g-\grk_U(\ker(\phi)) = g-k.
    \]
    Thus \Cref{eq:inequality_rankI_g_k} holds and the proof is complete.
\end{proof}

Next we recall a topological lemma that will help us determine the structure of the irreducible components of $C_B(\ccF)$ in \Cref{lem:structure_components}.

\begin{lemma}\label{lem:topological_lemma}
    Let $X$ be a topological space and let $V_1,\ldots, V_n$ be a collection of mutually distinct irreducible closed subsets such that 
    \[
        X = \bigcup_{i = 1}^n V_i.
    \]
    Then the irreducible components of $X$ are
    \begin{equation}\label{eq:set_of_components}
        \{V_i\colon i\in\{1,\ldots, n\},   V_i\not\subseteq V_j \, \forall\,  j\neq i\}.
    \end{equation}
\end{lemma}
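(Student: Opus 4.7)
The plan is to prove the two standard inclusions: every $V_i$ in the set \eqref{eq:set_of_components} is an irreducible component of $X$, and conversely every irreducible component of $X$ appears in that set. The key topological input is that if $W$ is an irreducible closed subset covered by finitely many closed subsets, then $W$ is contained in one of them.

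First I would establish that auxiliary fact: if $W \subseteq X$ is irreducible and $W \subseteq \bigcup_{i=1}^n V_i$, then $W \subseteq V_i$ for some $i$. Indeed, $W = \bigcup_{i=1}^n (W \cap V_i)$ is a finite union of closed subsets of $W$, and irreducibility of $W$ forces $W = W \cap V_i$ for some $i$.

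Next, for the forward inclusion, let $V_i$ be maximal in the sense of \eqref{eq:set_of_components}, i.e., $V_i \not\subseteq V_j$ for every $j \neq i$. Since $V_i$ is irreducible, it is contained in some irreducible component $W$ of $X$. Applying the auxiliary fact to $W$ gives $W \subseteq V_j$ for some $j$, hence $V_i \subseteq V_j$; by maximality of $V_i$ this forces $j = i$, so $W \subseteq V_i \subseteq W$, i.e., $V_i = W$ is an irreducible component. For the reverse inclusion, if $W$ is any irreducible component of $X$, the same auxiliary fact gives $W \subseteq V_j$ for some $j$; since $V_j$ is irreducible and $W$ is maximal among irreducible closed subsets, $W = V_j$. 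If $V_j$ were contained in some $V_k$ with $k\neq j$, then $W = V_j \subsetneq V_k$ would contradict the maximality of $W$; hence $V_j$ lies in the set \eqref{eq:set_of_components}.

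The argument is essentially routine set-topology; the only subtlety to keep in mind is the finiteness of the cover, which is what allows the passage from ``$W$ is irreducible and contained in $\bigcup V_i$'' to ``$W$ lies in one $V_i$''. I do not anticipate any serious obstacle beyond being careful that the $V_i$ are assumed \emph{mutually distinct}, so that maximality under inclusion is meaningful and the component we produce is unambiguous.
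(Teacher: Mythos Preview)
Your proposal is correct and follows essentially the same approach as the paper: both arguments reduce to the fact that an irreducible closed subset contained in a finite union of closed sets must lie in one of them, and then match components with maximal $V_i$'s via the two inclusions. The only cosmetic differences are that the paper spells out the inductive step of the auxiliary fact inline rather than stating it separately, and proves the two inclusions in the opposite order.
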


\begin{proof}
    Let $Z$ be an irreducible component of $X$. We first prove that $Z=V_i$ for some $i$. We have that
    \[
        Z = \bigcup_{i = 1}^n (Z\cap V_i) = (Z\cap V_1) \cup \left(\bigcup_{i = 2}^n (Z\cap V_i)\right)
    \]
    with $Z\cap V_i$ closed for all $i$. Thus, either $Z\subseteq Z\cap V_1$ or  $Z \subseteq \cup_{i = 2}^n (Z\cap V_i)$. Repeating this argument, we can find $i\in\{1,\ldots, n\}$ such that $Z\subseteq Z\cap V_i$, that is, $Z\subseteq V_i$. This inclusion must be an equality because $Z$ is an irreducible component and $V_i$ is irreducible. Thus $Z=V_i$. Next, we prove that $V_i$ lies in the set in \Cref{eq:set_of_components}.    By contradiction, if $V_i\subseteq V_j$ for some $j\neq i$, then $V_i\subsetneqq V_j$ by the assumption that $V_i$ and $V_j$ are mutually distinct. This contradicts the fact that $V_i=Z$ is a component because $V_j$ is irreducible. 

    Conversely, let $V_i$ lie in the set in \Cref{eq:set_of_components}. Since $V_i$ is irreducible, it must be contained in an irreducible component $Z$ of $X$. By the previous part, $Z=V_j$ for some $j$. Thus $V_i\subseteq V_j$, which can only happen if $i=j$ since $V_i$ is assumed to be in the set in \Cref{eq:set_of_components}. Thus $V_i = Z$ is an irreducible component. 
\end{proof}

\begin{lemma}\label{lem:structure_components}
    Let $B,\ccF$ as in \Cref{assumption:abelian_cone}. The irreducible components of $C_B(\ccF)$ are 
    \begin{equation}\label{eq:components_CBF}
        \left\{\overline{\pi^{-1}(B_{i,j})}\colon (i,j)\in \Lambda, \overline{\pi^{-1}(B_{i,j})} \not\subseteq \overline{\pi^{-1}(B_{k,\ell})}\ \forall\, (k,\ell)\in\Lambda\setminus\{(i,j)\}\right\}
    \end{equation}
\end{lemma}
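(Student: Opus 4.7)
The plan is to exhibit $C_B(\ccF)$ as a finite union of irreducible closed subsets of the form $\overline{\pi^{-1}(B_{i,j})}$ and then invoke \Cref{lem:topological_lemma}.

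First, I would set up the cover. By \Cref{notation:Bij}, $B=\bigsqcup_{(i,j)\in\Lambda} B_{i,j}$ is a finite set-theoretic partition into irreducible locally closed subschemes; finiteness of $\Lambda$ uses that $\ccF$ takes only finitely many ranks ($f\leq \rk \leq m$) and that each rank-$i$ locus $B_i$ has finitely many irreducible components by Noetherianity. Taking preimages and then closures, one obtains
\[
    C_B(\ccF)=\bigcup_{(i,j)\in\Lambda}\overline{\pi^{-1}(B_{i,j})}.
\]
Each $\overline{\pi^{-1}(B_{i,j})}$ is irreducible because $\pi\colon \pi^{-1}(B_{i,j})\to B_{i,j}$ is a vector bundle of rank $f+i$ over the irreducible base $B_{i,j}$, as already observed in the paragraph containing \Cref{eq:dimension_preimage_Bij}.

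Next, I would verify the mutual distinctness hypothesis of \Cref{lem:topological_lemma}. Let $\eta_{i,j}$ denote the generic point of the irreducible scheme $\pi^{-1}(B_{i,j})$. The zero section $B\to C_B(\ccF)$ shows that $\pi$ is surjective, and in particular $\pi(\pi^{-1}(B_{i,j}))=B_{i,j}$. Using the continuity of $\pi$,
\[
    B_{i,j}=\pi\bigl(\overline{\{\eta_{i,j}\}}\bigr)\subseteq \overline{\{\pi(\eta_{i,j})\}},
\]
which forces $\pi(\eta_{i,j})$ to be the generic point of $B_{i,j}$. Since the strata $B_{i,j}$ are pairwise disjoint they have pairwise distinct generic points, so the points $\eta_{i,j}$ are pairwise distinct in $C_B(\ccF)$ and the closures $\overline{\pi^{-1}(B_{i,j})}$ are pairwise distinct irreducible closed subsets.

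Finally, I would apply \Cref{lem:topological_lemma} with $X=C_B(\ccF)$ and the family $\{\overline{\pi^{-1}(B_{i,j})}\}_{(i,j)\in\Lambda}$ to read off exactly \Cref{eq:components_CBF}. The only step that requires genuine thought is the mutual distinctness verification above; everything else is an immediate consequence of the setup of \Cref{notation:Bij} together with elementary point-set topology, so I do not anticipate a substantial obstacle.
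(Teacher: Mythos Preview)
Your proposal is correct and follows exactly the paper's approach: cover $C_B(\ccF)$ by the finitely many irreducible closed subsets $\overline{\pi^{-1}(B_{i,j})}$ and apply \Cref{lem:topological_lemma}. The paper's proof is a two-line version of yours that simply asserts mutual distinctness, whereas you supply the generic-point argument explicitly; this extra detail is fine and does not deviate from the paper's strategy.
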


\begin{proof}
    We have that 
    \[
        B=\bigcup_{(i,j)\in \Lambda} B_{i,j} \Rightarrow 
        C_B(\ccF) = \bigcup_{(i,j)\in \Lambda} \overline{\pi^{-1}(B_{i,j})}.
    \]
    with $\overline{\pi^{-1}(B_{i,j})}$ irreducible, mutually distinct and closed for all $(i,j)\in \Lambda$. Thus \Cref{lem:topological_lemma} applies.
\end{proof}

Next, we reinterpret the condition on the inclusions between the tentative components $\overline{\pi^{-1}(B_{i,j})}$ in \Cref{eq:components_CBF} in terms of inclusions between the loci $\overline{B_{i,j}}$ in $B$ and the numbers $d_{i,j}$.

\begin{proposition}\label{prop:containment_tentative_components}
    Let $B,\ccF$ as in \Cref{assumption:abelian_cone}. For $(i,j) \in \Lambda$, the following are equivalent:
    \begin{enumerate}
        \item \label{item:component} $\overline{\pi^{-1}(B_{i,j})}$ is an irreducible component,
        \item \label{item:maximal} 
        $d_{i,j} \geq d_{k,\ell}$ for every $(k,\ell) \in \Lambda$ such that $B_{i,j}\subset \overline{B_{k,\ell}}$.
    \end{enumerate}
\end{proposition}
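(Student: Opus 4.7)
The plan is to prove the two implications separately, using \Cref{lem:dimension_components_abelian_cone} for the direction (1) $\Rightarrow$ (2) and the topological description in \Cref{lem:structure_components} for (2) $\Rightarrow$ (1).

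For (1) $\Rightarrow$ (2), I would start by fixing $(k,\ell)\in\Lambda$ with $B_{i,j}\subseteq\overline{B_{k,\ell}}$ and observing that, by base change, $\pi^{-1}(\overline{B_{k,\ell}})=C_{\overline{B_{k,\ell}}}(\ccF|_{\overline{B_{k,\ell}}})$. This is a closed subscheme of $C_B(\ccF)$ containing $\overline{\pi^{-1}(B_{i,j})}$. The key observation is then that an irreducible component of $C_B(\ccF)$ which sits inside the closed subset $C_{\overline{B_{k,\ell}}}(\ccF|_{\overline{B_{k,\ell}}})$ is also an irreducible component there, since maximality of an irreducible closed subset is preserved when restricting to a closed subset containing it. The integral Noetherian scheme $\overline{B_{k,\ell}}$ together with $\ccF|_{\overline{B_{k,\ell}}}$ satisfies \Cref{assumption:abelian_cone} with generic rank $f+k$, so I may apply \Cref{lem:dimension_components_abelian_cone} to conclude
\[
    \dim \overline{\pi^{-1}(B_{i,j})} \;\geq\; \dim \overline{B_{k,\ell}} + (f+k) \;=\; b + f + d_{k,\ell}.
\]
Combining this with $\dim \overline{\pi^{-1}(B_{i,j})} = b + f + d_{i,j}$ from \Cref{eq:dimension_preimage_Bij} yields $d_{i,j}\geq d_{k,\ell}$.

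For (2) $\Rightarrow$ (1), I would argue by contrapositive. If $\overline{\pi^{-1}(B_{i,j})}$ is not an irreducible component, \Cref{lem:structure_components} supplies $(k,\ell)\neq (i,j)$ with $\overline{\pi^{-1}(B_{i,j})}\subseteq \overline{\pi^{-1}(B_{k,\ell})}$. Projecting via $\pi$ and using the zero section to embed $\overline{B_{i,j}}$ into $\overline{\pi^{-1}(B_{i,j})}$, together with continuity of $\pi$, gives $\pi(\overline{\pi^{-1}(B_{i,j})})=\overline{B_{i,j}}$ (and similarly on the right), so $B_{i,j}\subseteq \overline{B_{k,\ell}}$. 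A short argument using generic ranks shows that the closures $\overline{B_{i,j}}$ are pairwise distinct for distinct $(i,j)\in \Lambda$: the generic point of $\overline{B_{i,j}}$ lies in $B_{i,j}$ where $\ccF$ has rank exactly $f+i$, so $(i,j)$ is recovered from the generic point via its rank $i$ and its irreducible component index $j$ in $B_i$. Hence the inclusion $\overline{\pi^{-1}(B_{i,j})}\subseteq \overline{\pi^{-1}(B_{k,\ell})}$ is strict, forcing $\dim\overline{\pi^{-1}(B_{i,j})}<\dim\overline{\pi^{-1}(B_{k,\ell})}$, that is $d_{i,j}<d_{k,\ell}$, which contradicts (2).

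The hard part, I expect, is the transfer of irreducible components across base change in (1) $\Rightarrow$ (2): it is only a topological fact, but it is precisely what allows us to deploy \Cref{lem:dimension_components_abelian_cone} inside $\overline{B_{k,\ell}}$ rather than $B$, and the whole reduction hinges on it. The remainder is dimension bookkeeping via \Cref{eq:dimension_preimage_Bij} and the definition of $d_{i,j}$ in \Cref{eq:definition_dij}.
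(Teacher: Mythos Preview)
Your proposal is correct and follows essentially the same route as the paper: for (1)~$\Rightarrow$~(2) you base-change to $\overline{B_{k,\ell}}$ and invoke \Cref{lem:dimension_components_abelian_cone} on the resulting abelian cone, and for (2)~$\Rightarrow$~(1) you argue by contraposition via \Cref{lem:structure_components}, exactly as the paper does. Your extra justification that the $\overline{B_{i,j}}$ (and hence the $\overline{\pi^{-1}(B_{i,j})}$) are pairwise distinct is a welcome elaboration of a point the paper leaves implicit.
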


\begin{proof}
    First we prove that \Cref{item:maximal} implies \Cref{item:component} by contraposition.
    If \Cref{item:component} does not hold, by \Cref{lem:structure_components} there exist $(k,\ell) \in \Lambda \setminus \{(i,j)\}$ such that
    \[
       \overline{\pi^{-1}(B_{i,j})} \subset \overline{\pi^{-1}(B_{k,\ell})}.
    \] 
    This implies that $B_{i,j}\subseteq \overline{B_{k,\ell}}$ and $\dim (\pi^{-1}(B_{i,j})) < \dim(\pi^{-1}(B_{k\ell}))$. Therefore $d_{i,j} < d_{k,\ell}$ by \Cref{eq:dimension_preimage_Bij}, so \Cref{item:maximal} does not hold. 

    Now suppose that \Cref{item:component} holds and let $(k,\ell) \in \Lambda$ such that $B_{i,j}\subset \overline{B_{k,\ell}}$. Since abelian cones are compatible with base-change, we have that
    \[
        \pi^{-1}(\overline{B_{k,\ell}}) = \pi^{-1}(\cl_{B}(B_{k,\ell})) = C_{\cl_{B}(B_{k,\ell})} (\ccF\mid_{\cl_{B}(B_{k,\ell})})
    \]
    is an abelian cone, which we denote by $C_{k,\ell}$. Its main component is $C^\circ_{k,\ell} = \pi^{-1}(B_{k,\ell})$. By assumption, $\overline{\pi^{-1}(B_{i,j})} = \cl_{C_B(\ccF)}(\pi^{-1}(B_{i,j}))$ is also an irreducible component of $C_{k,\ell}$. Therefore, by \Cref{lem:dimension_components_abelian_cone} and \Cref{eq:dimension_preimage_Bij} we have that
    \[
        \dim (\pi^{-1}(B_{i,j})) \geq \dim (C^\circ_{k,\ell}) = \dim(\pi^{-1}(B_{k,\ell})) \Rightarrow d_{i,j}\geq d_{k,\ell}.
    \]
    Thus \Cref{item:maximal} holds.
\end{proof}

\begin{theorem}\label{cor:decomposition_abelian_cone}
    Let $B,\ccF$ as in \Cref{assumption:abelian_cone}. Then the irreducible components of $C_B(\ccF)$ are
    \[
        \left\{\overline{\pi^{-1}(B_{i,j})}\colon (i,j)\in \Lambda, d_{i,j}\geq d_{k,\ell} \, \forall\, (k,\ell)\in\Lambda \text{ with } B_{i,j}\subseteq \overline{B_{k,\ell}}\right\}
    \]
\end{theorem}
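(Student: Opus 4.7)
The plan is to obtain this theorem as an immediate corollary of the two preceding results, which together give both a topological and a numerical characterization of the components of $C_B(\ccF)$.

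First I would invoke \Cref{lem:structure_components} to reduce the problem to characterizing those $(i,j)\in\Lambda$ for which $\overline{\pi^{-1}(B_{i,j})}$ is not properly contained in any other $\overline{\pi^{-1}(B_{k,\ell})}$. The hypotheses of \Cref{lem:structure_components} are satisfied by the family $\{\overline{\pi^{-1}(B_{i,j})}\}_{(i,j)\in\Lambda}$: each set is closed and irreducible (as $\pi^{-1}(B_{i,j})$ is a vector bundle over the irreducible $B_{i,j}$, by the discussion around \Cref{eq:dimension_preimage_Bij}), the sets are mutually distinct since the $B_{i,j}$ stratify $B$, and their union is all of $C_B(\ccF)$.

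Then I would apply \Cref{prop:containment_tentative_components} to trade the containment condition ``$\overline{\pi^{-1}(B_{i,j})}\not\subseteq \overline{\pi^{-1}(B_{k,\ell})}$ for all $(k,\ell)\neq(i,j)$'' for the equivalent numerical condition ``$d_{i,j}\geq d_{k,\ell}$ for every $(k,\ell)\in\Lambda$ with $B_{i,j}\subseteq \overline{B_{k,\ell}}$''. Splicing these two characterizations together gives exactly the set described in the theorem.

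The only bookkeeping to note is that the numerical condition as stated in the theorem ranges over all $(k,\ell)$ with $B_{i,j}\subseteq \overline{B_{k,\ell}}$, which includes the tautological case $(k,\ell)=(i,j)$; but that case yields only the trivial inequality $d_{i,j}\geq d_{i,j}$, so it is harmless and the condition matches the one in \Cref{prop:containment_tentative_components}. Since the real mathematical content is already packaged into \Cref{lem:dimension_components_abelian_cone}, \Cref{lem:topological_lemma}, and \Cref{prop:containment_tentative_components}, I do not expect any substantive obstacle; the proof is essentially a one-line invocation of the preceding lemma and proposition.
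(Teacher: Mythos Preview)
Your proposal is correct and matches the paper's own proof essentially verbatim: the paper simply states that the result is an immediate corollary of \Cref{lem:structure_components} and \Cref{prop:containment_tentative_components}. Your added remarks verifying the hypotheses and handling the tautological case $(k,\ell)=(i,j)$ are harmless elaborations of the same argument.
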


\begin{proof}
    This is an immediate corollary of \Cref{lem:structure_components} and \Cref{prop:containment_tentative_components}.
\end{proof}

In practice, \Cref{corollary:decomposition_abelian_cone_stratification} is useful only if one can describe the loci $B_{i,j}$, which might be hard in general. For the applications of this paper to stable maps (see \Cref{sec:abelian_cones_over_Pic,sec:application_stable_maps}), we have a stratification of each $B_i$ as follows.

\begin{assumption}\label{assumption:stratification}
    Let $B,\ccF$ as in \Cref{assumption:abelian_cone}. For each $i\in \{0,\ldots, m-f\}$, let
    \[
        B_i = \bigsqcup_{j\in \Delta_i} D_{i,j}
    \]
    be a finite stratification with $D_{i,j}$ locally closed in $B$ and irreducible, considered with their reduced induced structure.  
\end{assumption}

In the situation of \Cref{assumption:stratification}, we extend \Cref{notation:Bij} as follows. We define
\[
    \Delta \coloneqq \{(i,j)\colon i \in\{0,\ldots, m-f\}, j\in \Delta_i\}
\]
following \Cref{eq:index_set_Lambda} and we let 
\begin{equation}\label{eq:definition_dij_stratification}
    d_{i,j}\coloneqq \grk_{D_{i,j}}(\ccF\mid_{D_{i,j}})-\codim_B(D_{i,j}) = i - \codim_B(D_{i,j})
\end{equation}
for each $(i,j) \in \Delta$, as in \Cref{eq:definition_dij}.

\begin{theorem}\label{corollary:decomposition_abelian_cone_stratification}
    Under \Cref{assumption:stratification}, the irreducible components of $C_B(\ccF)$ are
    \begin{equation}\label{eq:components_abelian_cone_stratification}
        \left\{\overline{\pi^{-1}(D_{i,j})}\colon (i,j)\in \Delta, d_{i,j}\geq d_{k,\ell} \, \forall\, (k,\ell)\in\Delta \text{ with } D_{i,j}\subseteq \overline{D_{k,\ell}}\right\}
    \end{equation}
\end{theorem}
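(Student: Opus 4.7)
The plan is to adapt the proofs of \Cref{lem:structure_components} and \Cref{prop:containment_tentative_components} to the stratification $\{D_{i,j}\}_{(i,j)\in\Delta}$ from \Cref{assumption:stratification}, which refines the irreducible decomposition of the Fitting ideal loci used in \Cref{cor:decomposition_abelian_cone}. Nothing new has to be invented; I just need to check that each input to the original arguments survives the weakening from ``irreducible decomposition of $B_i$'' to ``finite stratification of $B_i$ by irreducible locally closed subschemes''.

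First I would establish the analogue of \Cref{lem:structure_components}. Since $B = \bigsqcup_{(i,j)\in\Delta} D_{i,j}$, we have $C_B(\ccF) = \bigcup_{(i,j)\in\Delta} \overline{\pi^{-1}(D_{i,j})}$. Each $D_{i,j}$ is irreducible by assumption and, because $D_{i,j}\subseteq B_i$, the sheaf $\ccF$ is locally free of rank $f+i$ on $D_{i,j}$; hence $\pi^{-1}(D_{i,j})$ is a vector bundle over an irreducible scheme, so irreducible of dimension $b + f + d_{i,j}$ as in \Cref{eq:dimension_preimage_Bij}. To apply \Cref{lem:topological_lemma} I also need the closed sets $\overline{\pi^{-1}(D_{i,j})}$ to be mutually distinct. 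If two coincided, projection to $B$ would give $\overline{D_{i,j}} = \overline{D_{k,\ell}}$; but $D_{i,j}$ and $D_{k,\ell}$ are disjoint and each is open and dense in its own closure, a contradiction. \Cref{lem:topological_lemma} then identifies the components as those $\overline{\pi^{-1}(D_{i,j})}$ not contained in any other $\overline{\pi^{-1}(D_{k,\ell})}$.

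Next I would prove the analogue of \Cref{prop:containment_tentative_components}: the containment condition is equivalent to the numerical condition on $d_{i,j}$. For the direction \emph{component $\Rightarrow$ inequality}, assume $\overline{\pi^{-1}(D_{i,j})}$ is a component and fix $(k,\ell)\in\Delta$ with $D_{i,j}\subseteq \overline{D_{k,\ell}}$. Then $\overline{\pi^{-1}(D_{i,j})}\subseteq \pi^{-1}(\overline{D_{k,\ell}}) = C_{\overline{D_{k,\ell}}}(\ccF\mid_{\overline{D_{k,\ell}}})$ (abelian cones are compatible with base change) and remains a component there, since being maximal irreducible closed in $C_B(\ccF)$ makes it maximal irreducible closed in any closed subspace that contains it. The key observation is that $D_{k,\ell}$ is open and dense in $\overline{D_{k,\ell}}$ and $\ccF$ has rank $f+k$ on it, so the generic rank of $\ccF\mid_{\overline{D_{k,\ell}}}$ is $f+k$ and the main component of $C_{\overline{D_{k,\ell}}}(\ccF\mid_{\overline{D_{k,\ell}}})$ has dimension $\dim \overline{D_{k,\ell}} + f + k = b + f + d_{k,\ell}$. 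Applying \Cref{lem:dimension_components_abelian_cone} to this smaller abelian cone yields $\dim \overline{\pi^{-1}(D_{i,j})} \geq b + f + d_{k,\ell}$, hence $d_{i,j}\geq d_{k,\ell}$. The converse direction \emph{inequality $\Rightarrow$ component} is by contrapositive exactly as in \Cref{prop:containment_tentative_components}: if $\overline{\pi^{-1}(D_{i,j})}$ is not a component, the analogue of \Cref{lem:structure_components} gives $(k,\ell)\neq (i,j)$ with $\overline{\pi^{-1}(D_{i,j})}\subsetneq \overline{\pi^{-1}(D_{k,\ell})}$; projecting to $B$ yields $D_{i,j}\subseteq \overline{D_{k,\ell}}$ and the strict dimension inequality of \Cref{eq:dimension_preimage_Bij} forces $d_{i,j} < d_{k,\ell}$.

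Combining these two steps yields the description \eqref{eq:components_abelian_cone_stratification}. I do not foresee a genuine obstacle: the only delicate point is the identification of the generic rank of $\ccF\mid_{\overline{D_{k,\ell}}}$, which follows at once from the fact that $D_{k,\ell}$ is open dense in $\overline{D_{k,\ell}}$ and carries the constant rank $f+k$.
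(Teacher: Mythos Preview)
Your proposal is correct and follows exactly the paper's approach: the paper's own proof is the single sentence that \Cref{lem:structure_components} and \Cref{prop:containment_tentative_components} go through verbatim with $\{B_{i,j}\}$ replaced by $\{D_{i,j}\}$, and you have simply unpacked that sentence. Your extra check of mutual distinctness of the $\overline{\pi^{-1}(D_{i,j})}$ and of the generic rank of $\ccF\mid_{\overline{D_{k,\ell}}}$ are the only places where the refined stratification requires a word more than the original, and you handle both correctly.
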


\begin{proof}
    Both \Cref{lem:structure_components} and \Cref{prop:containment_tentative_components} (with the same proofs) apply when we replace $\{B_{i,j}\}$ by the stratification $\{D_{i,j}\}$.
\end{proof}

Finally, we generalize \Cref{corollary:decomposition_abelian_cone_stratification} to stacks.

\begin{assumption}\label{assumption:stratification_stacks}
    Let $\ffB$ be a smooth Noetherian Artin stack and $\ffF$ be a coherent sheaf on $\ffB$. Let $b=\dim(\ffB)$, $f=\grk_{\ffB}(\ffF)$ and $m=\mrk_{\ffB}(\ffF)$. For each $i\in \{0,\ldots, m-f\}$, let
    \[
        \ffB_i = \bigsqcup_{j\in \Delta_i} \ffD_{i,j}
    \]
    be a finite stratification with $\ffD_{i,j}$ irreducible locally closed substacks of $\ffB$, considered with their reduced induced structure.  
\end{assumption}

For $(i,j)\in\Delta$, we define $\mathfrak{d}_{i,j}$ as in \Cref{eq:definition_dij_stratification}. In the applications (\Cref{sec:stable_maps,sec:contraction}), we shall write $d$ instead of $\mathfrak{d}$.

\begin{theorem}\label{thm:decomposition_abelian_cone_stratification_stacks}
    Under \Cref{assumption:stratification_stacks}, the irreducible components of the abelian cone $\pi \colon C_\ffB(\ffF) \to \ffB$ are
    \begin{equation}\label{eq:components_abelian_cone_stratification_stacks}
        \left\{\overline{\pi^{-1}(\ffD_{i,j})}\colon (i,j)\in \Delta, \mathfrak{d}_{i,j}\geq \mathfrak{d}_{k,\ell} \, \forall\, (k,\ell)\in\Delta \text{ with } \ffD_{i,j}\subseteq \overline{\ffD_{k,\ell}}\right\}
    \end{equation}
\end{theorem}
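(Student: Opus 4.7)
The plan is to reduce to the scheme-theoretic version, \Cref{corollary:decomposition_abelian_cone_stratification}, via a smooth atlas of $\ffB$. First, I would choose a smooth surjection $p\colon U \to \ffB$ with $U$ a Noetherian scheme; since $\ffB$ is smooth, so is $U$, and its irreducible components coincide with its connected components. Working on each separately, we may assume $U$ is integral. The generic rank of $p^*\ffF$ on $U$ then equals $f=\grk_\ffB(\ffF)$, since smooth surjective morphisms are faithfully flat.

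Next, I would pull back the stratification and refine it. Set $U_{i,j}:=p^{-1}(\ffD_{i,j})$, which is locally closed in $U$ but not necessarily irreducible, and decompose $U_{i,j}=\bigcup_{j'} U_{i,j,j'}$ into irreducible components with the reduced induced structure. The resulting finite stratification $\{U_{i,j,j'}\}$ of $U$ satisfies \Cref{assumption:stratification} for $p^*\ffF$. Crucially, since $p$ is smooth, we have $\codim_U(U_{i,j,j'})=\codim_\ffB(\ffD_{i,j})$, so the scheme-theoretic quantity $d_{i,j,j'}$ of \Cref{eq:definition_dij_stratification} equals the stacky $\mathfrak{d}_{i,j}$ for every $j'$.

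The third step is to match the topology and irreducible components. Abelian cones commute with base change, so $C_U(p^*\ffF)=C_\ffB(\ffF)\times_\ffB U$ and the projection $C_U(p^*\ffF)\to C_\ffB(\ffF)$ is smooth and surjective. Standard descent for smooth covers then gives that $\overline{\pi^{-1}(\ffD_{i,j})}$ is an irreducible component of $C_\ffB(\ffF)$ if and only if $\overline{\pi_U^{-1}(U_{i,j,j'})}$ is an irreducible component of $C_U(p^*\ffF)$ for every $j'$. Similarly, because $p$ is flat and open, the containment $\ffD_{i,j}\subseteq \overline{\ffD_{k,\ell}}$ on $\ffB$ is equivalent to: for every $j'$ there exists $\ell'$ with $U_{i,j,j'}\subseteq \overline{U_{k,\ell,\ell'}}$ on $U$.

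Combining these translations with \Cref{corollary:decomposition_abelian_cone_stratification} applied to $U$ and $p^*\ffF$ yields the desired criterion on $\ffB$. The main obstacle will be the careful bookkeeping when a preimage $U_{i,j}$ breaks into several irreducible components $U_{i,j,j'}$: one must verify that the equivalences above genuinely translate the numerical condition on the $\mathfrak{d}_{i,j}$ into the scheme-theoretic numerical condition on the $d_{i,j,j'}$, and in particular that the latter holds for \emph{every} $j'$ simultaneously. Once this is checked, the rest is formal and follows from general properties of smooth covers of Artin stacks together with \Cref{corollary:decomposition_abelian_cone_stratification}.
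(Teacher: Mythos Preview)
Your proposal is correct and follows essentially the same route as the paper: reduce to \Cref{corollary:decomposition_abelian_cone_stratification} by pulling back along a smooth atlas $U\to\ffB$, use compatibility of abelian cones with base change, and check that the numerical condition on $\mathfrak{d}_{i,j}$ descends because smooth surjective morphisms have fixed relative dimension. The paper's proof is terser and simply declares that $D_{i,j}:=U\times_{\ffB}\ffD_{i,j}$ satisfies \Cref{assumption:stratification} (invoking \cite[Lemma 0DR5]{stacks-project} for the correspondence of components), whereas you are more careful in decomposing each $U_{i,j}$ into irreducible pieces $U_{i,j,j'}$ and tracking the bookkeeping; this extra care is justified but does not change the underlying argument.
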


\begin{proof}
    The result follows from \Cref{corollary:decomposition_abelian_cone_stratification} because the components of an Artin stack can be computed in a smooth atlas and the conditions in \Cref{eq:components_abelian_cone_stratification} are local on the source. We spell out the details below. 

    Let $f\colon U\to \ffB$ be a smooth surjective morphism with $U$ a smooth Noetherian scheme. Since abelian cones are compatible with pull-back, we have a Cartesian diagram
    \[
        \begin{tikzcd}
            C_U(f^\ast \ffF)\arrow{r}{g}\arrow{d}{\pi_U} & C_\ffB(\ffF) \arrow{d}{\pi}\\
            U\arrow{r}{f} & \ffB
        \end{tikzcd}
    \]
    with $g$ smooth and surjective. By \cite[Lemma 0DR5]{stacks-project}, there is a 1-to-1 correspondence between the irreducible components of $C_U(f^\ast \ffF)$ and of $C_{\ffB}(\ffF)$. Furthermore, we have that $U_i = \ffB_i\times_{\ffB} U$ for all $i$ because Fitting ideals commute with base-change. Thus, the collection of $D_{i,j}\coloneqq U\times_{\ffB} \ffD_{i,j}$ for $(i,j)\in\Delta$ produce stratifications of $U_{i}$ for all $i$ satisfying \Cref{assumption:stratification}. This proves that we can compute the irreducible components of $C_{\ffB}(\ffF)$ by pulling back to a smooth chart $U$ of $\ffB$ and applying \Cref{corollary:decomposition_abelian_cone_stratification}.

    To conclude, we need to prove that given $(i,j)\in\Delta$, the condition
    \[
        \mathfrak{d}_{i,j}\geq \mathfrak{d}_{k,\ell} \, \forall\, (k,\ell)\in\Delta \text{ with } \ffD_{i,j}\subseteq \overline{\ffD_{k,\ell}}
    \]
    on $\ffB$ pulls back to the condition
    \[
        d_{i,j}\geq d_{k,\ell} \, \forall\, (k,\ell)\in\Delta \text{ with } D_{i,j}\subseteq \overline{D_{k,\ell}}
    \]
    on $U$. This holds because, as in \Cref{eq:dimension_preimage_Bij},
    \[
        \mathfrak{d}_{i,j} - \mathfrak{d}_{k,\ell} = \dim(\pi_U^{-1}(\ffD_{i,j})) - \dim(\pi_U^{-1}(\ffD_{k,\ell}))
    \]
    is constant under pullback along smooth surjective morphisms, as these have a fixed relative dimension.
\end{proof}

\begin{remark}
    In \Cref{subsec:comps_ambient_cone}, we shall describe the irreducible components of the abelian cone $S_{0,n,X,\beta, \sigma}$ defined in \Cref{eq:definition_ambient_cone}. For that, we shall apply \Cref{corollary:decomposition_abelian_cone_stratification} to the stratification of $\Pic^{st}_{0,n,X,\beta, \sigma}$ by stable decorated dual trees in \Cref{cor:stratification_prod_Pic}. Interestingly, in that case the conditions in \Cref{eq:components_abelian_cone_stratification} are combinatorial, see \Cref{rmk:description_components_S_is_combinatorial}. Combining the information about the irreducible components of $S_{0,n,X,\beta,\sigma}$ with the open embedding in \Cref{eq:open_embedding_stable_maps}, we shall describe the irreducible components of genus-0 stable maps to toric varieties in \Cref{cor:irreducible_decomposition_maps}.
\end{remark}

\section{Stratifications by dual graphs}\label{sec:background}

We introduce various moduli spaces, such as prestable curves, the Picard stack and a product of Picard stacks. We also associate to them combinatorial data, in the form of trees with extra information, often called  dual graphs.

\subsection{Prestable curves and the Picard stack}

\begin{notation}\label{notation:Pic}
    Given $g,n \in \bbZ_{\geq 0}$ and $d \in \bbZ$, we denote by 
    \[
        \ffM_{g,n} 
    \]
    the stack of genus-$g$ prestable $n$-marked curves and by 
    \[
        \Pic_{g,n,d}
    \]
    the Picard stack parametrizing tuples $(C,p_1,\ldots, p_n, L)$  with 
    \begin{itemize}
        \item $(C,p_1,\ldots, p_n)$ a
    genus-$g$ prestable $n$-marked curve and
        \item $L$ a line bundle on $C$ of total degree $d$.
    \end{itemize}
    We denote by 
    $\pi\colon \ffC \to \Pic_{g,n,d}$ the universal curve and by $\ffL$ the universal line bundle on $\ffC$. There is a forgetful morphism $\Pic_{g,n,d}\to \ffM_{g,n}$.
    
    We shall also use the notations $\ffM_{g,M}$ and $\Pic_{g,M,d}$ to denote the analogous stacks where the marking set $\{1,\ldots, n\}$ is replaced by a finite set $M$.
\end{notation}

\begin{notation}\label{notation:Pic_stable}
    We denote by 
    \[
        \Pic_{g,n,d}^{sm}
    \]
    the open substack of $\Pic_{g,n,d}$ where the curve is smooth and by 
    \[
        \Pic_{g,n,d}^{st}
    \]
    the open substack of $\Pic_{g,n,d}$ parametrizing tuples $(C,p_1,\ldots, p_n,L)$ where
    \begin{equation}\label{eq:stability_Pic}
        L^{\otimes 3} \otimes \omega_\pi\left(\sum_{i=1}^n p_i\right)
    \end{equation}
    is ample. The stacks $\ffP_{g,M,d}^{sm}$ and $\ffP_{g,M,d}^{st}$ are defined analogously.
\end{notation}

The stack $\Pic_{g,n,d}$ is locally Noetherian, irreducible and smooth. Furthermore, $\Pic_{g,n,d}^{st}$ is Noetherian. 

\begin{remark}\label{rmk:Pic_stable_nonempty}
    Note that, since $\Pic_{0,n,d}$ is integral, we have that $\Pic^{st}_{0,n,d}$ is non-empty if and only if $\Pic^{sm}_{0,n,d}\subseteq \Pic^{st}_{0,n,d}$. This inclusion holds if and only if $3d+n-2 > 0$, see \Cref{eq:stability_weighted_trees}. 
\end{remark}

\begin{assumption}\label{assumption:Pic_stable_nonempty}
    We assume that $3d+n-2 > 0$. 
\end{assumption}

\subsection{Stratification by weighted dual graphs}\label{subsec:weighted_dual_trees}

In the particular case $g=0$, we attach combinatorial data, known as weighted dual graphs, to the elements in $\Pic_{0,n,d}$. The loci with fixed dual graph provide a natural stratification of $\Pic_{0,n,d}$.

Given a graph $G$, we denote the set of vertices (resp. edges) of $G$ by $V(G)$ (resp. $E(G)$). Given a vertex $v\in V(G)$, we write $\operatorname{val}(v)$ for the valency of $v$. Later on we shall consider tuples $\GG$ of a graph $G$ with extra information. By abuse of notation, we often write $E(\GG)$ for $E(G)$.

\begin{definition}
    An $n$-marked tree is a pair
    \[
         \GG = (G, m\colon \{1,\ldots, n\}\to V(G))
    \]
    with $G$ a tree. If $n$ is fixed, we simply say that $\GG$ is a marked tree. We denote by $\Gamma_{0,n}$ the set of $n$-marked trees. An isomorphism between two weighted $n$-marked trees $\GG_1,\GG_2$ is an isomorphism of the graphs $G_1,G_2$ which commutes with the morphisms $m_i$ and $\deg_i$.
\end{definition}

\begin{definition}
    A weighted $n$-marked tree is a triple 
    \[
         \GG = (G, m\colon \{1,\ldots, n\}\to V(G), \deg\colon V(G)\to \bbZ)
    \]
    where $(G,m)$ is an $n$-marked tree. We say $\GG$ is $d$-weighted if $\sum_{v\in V(G)} \deg(v) = d$. The set of all $d$-weighted $n$-marked trees is denoted by $\Gamma_{0,n,d}$.
    
    We say that a weighted marked tree $\GG \in \Gamma_{0,n,d}$ is stable if 
    \begin{equation}\label{eq:stability_weighted_trees}
        3 \deg(v) + \#m^{-1}(v) + \mathrm{val}(v) - 2 > 0 \ \text{ for every } v\in V(G). 
    \end{equation}
    We denote by $\Gamma_{0,n,d}^{st}$ the set of all stable $d$-weighted $n$-marked trees.
\end{definition}

Forgetting the degree map $\deg$ induces a morphism $\Gamma_{0,n,d}\to \Gamma_{0,n}$. 
Note that $\Gamma_{0,n,d}^{st}$ is finite, while $\Gamma_{0,n,d}$, in general, is not.

\begin{construction}[Dual graph]\label{constr:dual_graph}
    To an element $(C,p_1,\ldots, p_n)\in \ffM_{0,n}$ we attach the following $n$-marked tree $\GG = (G,m) \in \Gamma_{0,n}$, called its dual graph. The tree $G$ has a vertex $v\in V(G)$ for each irreducible component $C_v$ of $C$ and an there is an edge $(v,v')\in E(\GG)$ if and only if $C_v\cap C_{v'}\neq \emptyset$. The morphism $m\colon \{1,\ldots, n\}\to V(G)$ is determined by the condition $m(i) = v$ if and only if $p_i\in C_v$.
\end{construction}

\begin{construction}[Weighted dual graph]\label{constr:weighted_dual_graph}
    Similarly, to an element $(C,p_1,\ldots, p_n,L)\in \Pic_{0,n,d}$ we attach the following $d$-weighted $n$-marked tree $\GG = (G,m,\deg) \in \Gamma_{0,n,d}$, called its weighted dual graph. The pair $(G,m)$ is the dual graph of $(C,p_1,\ldots, p_n)$. The morphism $\deg\colon V(G)\to \bbZ$ is defined as $\deg(v) \coloneqq \deg(L\mid_{C_v})$.
\end{construction}

\begin{remark}\label{rmk:stability_is_combinatorial}
    Note that $(C,p_1,\ldots, p_n,L)\in \Pic_{0,n,d}$ is stable if and only if its weighted dual graph $\GG \in \Gamma_{0,n,d}$ is stable.
\end{remark}

\begin{definition}
    For each $d$-weighted $n$-marked tree $\GG \in \Gamma_{0,n,d}$, we let $\Pic_{\GG}$ denote the locally closed substack of $\Pic_{0,n,d}$ parametrizing $(C,p_1,\ldots, p_n,L)$ such that the weighted dual graph of $(C,p_1,\ldots, p_n)$ is isomorphic to $\GG$.
\end{definition}

\begin{remark}\label{rmk:codimension_is_number_of_edges}
    For each $\GG \in \Gamma_{0,n,d}$, it holds that
    \[
        \codim_{\Pic_{0,n,d}} (\Pic_{\GG}) = \#E(\GG).
    \]
    Furthermore, if $\GG \in \Gamma^{st}_{0,n,d}$ then
    \[
         \codim_{\Pic^{st}_{0,n,d}} (\Pic_{\GG}) = \codim_{\Pic_{0,n,d}} (\Pic_{\GG}) = \#E(\GG).
    \]
\end{remark}

\begin{lemma}\label{lem:irreducible_strata_Pic}
    For each $\GG \in \Gamma_{0,n,d}$, the stack $\Pic_{\GG}$ is irreducible.
\end{lemma}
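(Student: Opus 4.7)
The plan is to exhibit $\Pic_{\GG}$ as the image of a clutching morphism whose source is a product of irreducible stacks, together with a torus factor parametrising gluing data for line bundles. Since surjective images of irreducible stacks are irreducible, this will suffice.

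Write $\GG = (G, m, \deg)$. For each vertex $v \in V(G)$, let $S_v$ be the disjoint union of $m^{-1}(v) \subseteq \{1,\ldots,n\}$ with the set of half-edges of $G$ incident to $v$, and set $d_v \coloneqq \deg(v)$. First I would observe that the open substack $\Pic^{sm}_{0, S_v, d_v} \subseteq \Pic_{0, S_v, d_v}$ parametrising smooth rational curves with $|S_v|$ marked points and a line bundle of total degree $d_v$ is irreducible (it is the restriction of the irreducible smooth stack $\Pic_{0, S_v, d_v}$ to the smooth locus of the universal curve and is non-empty). Therefore the product
\[
    \prod_{v \in V(G)} \Pic^{sm}_{0, S_v, d_v}
\]
is irreducible.

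Next I would construct a clutching morphism. Given a point in the product above, one glues the two marked points corresponding to the half-edges of each edge $e \in E(G)$ to produce a prestable genus-0 curve whose dual graph is $\GG$. To glue the line bundles at each node requires one additional parameter in $\bbG_m$ per edge (an identification of the two fibres at the node). This produces a morphism
\[
    \Phi\colon \prod_{v \in V(G)} \Pic^{sm}_{0, S_v, d_v} \times \prod_{e \in E(G)} \bbG_m \longrightarrow \Pic_{\GG}
\]
which by construction lands in $\Pic_{\GG}$. The source is irreducible, so it remains to check that $\Phi$ is surjective.

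Surjectivity is the main content of the argument: every point $(C, p_1,\ldots, p_n, L) \in \Pic_{\GG}$ arises in this way after normalising $C$ at its nodes and restricting $L$ to each irreducible component. The slight subtlety I would need to address is that different choices of labelling the half-edges at each node produce the same closed point of $\Pic_{\GG}$, so $\Phi$ factors through the (finite) action of $\Aut(\GG)$; this does not affect irreducibility. Once surjectivity is established, irreducibility of $\Pic_{\GG}$ follows immediately from irreducibility of the source of $\Phi$. The only real obstacle I anticipate is the bookkeeping in defining $\Phi$ carefully as a morphism of stacks rather than just on isomorphism classes of closed points, but this is a standard construction (the same clutching used to define the boundary strata of $\ffM_{0,n}$ and $\Pic_{0,n,d}$).
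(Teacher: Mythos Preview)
Your approach is essentially the same as the paper's: exhibit $\Pic_{\GG}$ as the image of the clutching map from the irreducible product $\prod_{v}\Pic^{sm}_{0,M_v,\deg(v)}$ (modulo $\mathrm{Aut}(\GG)$), and conclude. The only difference is that you insert an extra $\bbG_m$ per edge for the gluing of fibres; the paper omits this because those scalings are already absorbed by the automorphisms of the line bundles on the components (so the map is already surjective without them), but including them is harmless for irreducibility.
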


\begin{proof}
    For each vertex $v\in V(G)$, let 
    \[
        M_v = m^{-1}(v) \bigsqcup \{e\in E(\GG)\colon v\in e\}.
    \]
    Then there is a surjection from the irreducible stack
    \[
        \left(\prod_{v\in V(G)} \Pic^{sm}_{0,M_v,\deg(v)}\right)/\mathrm{Aut}(\GG)
    \]
    to $\Pic_{\GG}$.
    In particular $\Pic_{\GG}$ is irreducible.
\end{proof}

\begin{corollary}\label{cor:stratification_Pic}
    The following are stratifications by locally closed irreducible substacks 
    \begin{align*}
        \Pic_{0,n,d} = \bigsqcup_{\GG \in \Gamma_{0,n,d}/_\simeq} \Pic_{\GG},& &  \Pic_{0,n,d}^{st} = \bigsqcup_{\GG \in \Gamma_{0,n,d}^{st}/_\simeq} \Pic_{\GG}.
    \end{align*}
\end{corollary}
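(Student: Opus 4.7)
The plan is to prove the statement in three steps, then deduce the stable version from the stratification of $\Pic_{0,n,d}$.

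First, I would establish the set-theoretic decomposition. By Construction \ref{constr:weighted_dual_graph}, every geometric point of $\Pic_{0,n,d}$ has a well-defined weighted dual graph in $\Gamma_{0,n,d}$, up to isomorphism of weighted marked trees. Two geometric points lie in the same $\Pic_{\GG}$ if and only if their weighted dual graphs are isomorphic, so the substacks $\{\Pic_{\GG}\}_{\GG/{\simeq}}$ partition the underlying set of $\Pic_{0,n,d}$. The sum of degrees across vertices equals $d$ for any object, so only $\GG \in \Gamma_{0,n,d}$ can appear.

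Second, I would verify local closedness of each $\Pic_{\GG}$. This rests on the standard stratification of $\ffM_{0,n}$ by topological type of the dual tree: for an $n$-marked tree $(G,m)$, the locus $\ffM_{(G,m)} \subset \ffM_{0,n}$ of curves with dual graph isomorphic to $(G,m)$ is locally closed of codimension $\#E(G)$, cut out inside the closed boundary stratum (where every node indexed by an edge of $G$ persists) by the open condition that no further nodes are present. Pulling back along the forgetful morphism $\Pic_{0,n,d} \to \ffM_{0,n}$ gives a locally closed substack on which the function $v \mapsto \deg(L|_{C_v})$ is locally constant, since the relative Picard functor of a family of prestable curves with constant topological type is étale-locally a disjoint union of components indexed by multidegree. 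Fixing the multidegrees to match $\deg$ then exhibits $\Pic_{\GG}$ as an open and closed substack of this pullback, hence locally closed in $\Pic_{0,n,d}$. Irreducibility of each $\Pic_{\GG}$ is exactly Lemma \ref{lem:irreducible_strata_Pic}, and the decomposition is locally finite because only finitely many weighted dual graphs can specialize to or from a given one within a bounded neighbourhood.

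For the stable version, I would invoke Remark \ref{rmk:stability_is_combinatorial}: an object of $\Pic_{0,n,d}$ lies in the open substack $\Pic^{st}_{0,n,d}$ if and only if its weighted dual graph belongs to $\Gamma^{st}_{0,n,d}$. Intersecting the stratification of $\Pic_{0,n,d}$ with $\Pic^{st}_{0,n,d}$ therefore yields
\[
\Pic^{st}_{0,n,d} \;=\; \bigsqcup_{\GG \in \Gamma^{st}_{0,n,d}/{\simeq}} \Pic_{\GG},
\]
with each stratum still locally closed and irreducible, exactly as required.

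The main obstacle is Step 2: rigorously establishing that fixing the combinatorial type of the dual graph together with the multidegree defines a locally closed substack. This relies on two standard but nontrivial inputs, namely the lower-semicontinuity of the topological type of a prestable curve under specialization (nodes can only appear, never disappear, in limits) and the étale-local triviality of the multidegree function on the Picard stack over a stratum of constant topological type; both are well known, and the bookkeeping of these two pieces is the technical heart of the proof.
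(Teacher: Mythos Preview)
Your proposal is correct and follows essentially the same route as the paper, invoking \Cref{constr:weighted_dual_graph} for the set-theoretic decomposition, \Cref{lem:irreducible_strata_Pic} for irreducibility, and \Cref{rmk:stability_is_combinatorial} for the stable case. The only difference is one of emphasis: the paper absorbs local closedness into the definition of $\Pic_{\GG}$ and states the corollary as ``immediate'', whereas you unpack why fixing the topological type and multidegree yields a locally closed substack; your Step~2 is a welcome expansion of what the paper treats as standard.
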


\begin{proof}
    Immediate by \Cref{constr:weighted_dual_graph}, \Cref{rmk:stability_is_combinatorial} and \Cref{lem:irreducible_strata_Pic}.
\end{proof}

\begin{lemma}\label{lem:h1_constant_Pic}
    The morphisms $\Pic_{0,n,d}\to \bbZ$ mapping $(C,p_1,\ldots, p_n,L)$ to $h^0(C,L)$ and to $h^1(C,L)$ are both constant on $\Pic_{\GG}$ for each $\GG \in \Gamma_{0,n,d}$.
\end{lemma}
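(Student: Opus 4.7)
The plan is to reduce $h^0(C,L)$ and $h^1(C,L)$ to the rank of a certain linear map and to show this rank depends only on $\GG$. Let $\nu\colon \tilde C \to C$ be the normalization; its connected components are indexed by $V(\GG)$, and on each $C_v \simeq \bbP^1$ the restriction $L|_{C_v}$ has degree $\deg(v)$. Consequently
\[
    h^0(\tilde C, \nu^*L) = \sum_{v\in V(\GG)} \max(\deg(v)+1, 0) \quad \text{and} \quad h^1(\tilde C, \nu^*L) = \sum_{v\in V(\GG)} \max(-\deg(v)-1, 0)
\]
are determined by $\GG$. The normalization sequence $0\to L \to \nu_* \nu^*L \to L|_{\text{nodes}} \to 0$ gives the long exact sequence
\[
    0 \to H^0(C,L) \to H^0(\tilde C, \nu^*L) \xrightarrow{\psi} \bigoplus_{e\in E(\GG)} L|_{q_e} \to H^1(C,L) \to H^1(\tilde C, \nu^*L) \to 0,
\]
so it suffices to show that $\rk(\psi)$ depends only on $\GG$.

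I would prove this by induction on $\#E(\GG)$. The base case $\#E(\GG) = 0$ is trivial since $\psi = 0$. For the inductive step, pick a leaf $v$ of $\GG$ with edge $e$ to its parent $p$, and let $\GG'$ be obtained from $\GG$ by removing $v$ and $e$. Splitting the source and target of $\psi$ according to whether they involve $v$ or $e$, one obtains a block decomposition
\[
    \psi = \begin{pmatrix} A & B \\ 0 & \psi' \end{pmatrix},
\]
where $A\colon H^0(C_v, L|_{C_v}) \to L|_{q_e}$ is the evaluation on the $v$-side, $B$ is minus the evaluation on the $p$-side (supported on the $H^0(C_p, L|_{C_p})$-summand), and $\psi'$ is the analogous map for $\GG'$. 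If $\deg(v) \geq 0$ then $A$ is surjective, hence $\rk(\psi) = \rk(\psi') + 1$. If $\deg(v) < 0$ then $A = 0$, and one must decide whether the functional $B$ vanishes identically on $\ker(\psi')$.

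The main technical obstacle is showing that this last condition is combinatorial. For this I would argue by a secondary induction on subtree size that for every vertex $w$ of $\GG'$ and every incident edge $f$, the image of the restriction $\ker(\psi_{\mathrm{subtree}}) \to L|_{q_f}$, being a subspace of a one-dimensional space, is either $0$ or all of $L|_{q_f}$, and which case occurs is determined by the multidegrees on the subtree. Propagating these constraints through $\GG'$ rooted at $p$, one finds that the image of $\ker(\psi') \to H^0(C_p, L|_{C_p})$ equals $H^0(C_p, L|_{C_p}(-D_p))$ for an effective combinatorial divisor $D_p$ supported on the nodes of $C_p$ other than $q_e^p$. Since $q_e^p \notin D_p$ by distinctness of nodes, evaluation at $q_e^p$ is surjective on this image precisely when $\deg(p) \geq |D_p|$, a purely combinatorial condition. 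This completes the induction and the proof.
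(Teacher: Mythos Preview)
Your proof is correct and follows the same route as the paper: both invoke the normalization exact sequence together with the explicit cohomology of line bundles on $\bbP^1$ (\Cref{lem:dim_cohomology}). The paper's proof is a one-line assertion that $h^0$ and $h^1$ are ``computed combinatorially from the dual graph'' via these two ingredients, whereas you actually carry out the induction showing that $\rk(\psi)$ depends only on $\GG$; your leaf-removal argument and the secondary induction (propagating ``blocked/open'' status through the tree to identify the image in $H^0(C_p,L|_{C_p})$ as $H^0(C_p,L|_{C_p}(-D_p))$) are a correct way to fill in the detail the paper omits.
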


\begin{proof}
    This is a direct consequence of the fact that $h^0(C,L)$ and $h^1(C,L)$ can be computed combinatorially from the dual graph by combining the normalization sequence with \Cref{lem:dim_cohomology}.
\end{proof}

\begin{notation}\label{notation:h1_weighted_graph}
    Given $\GG \in \Gamma_{0,n,d}$ we write $h^0(\GG)$ (resp. $h^1(\GG)$) for the value of $h^0(C,L_\rho)$ (resp. $h^1(C,L_\rho)$), for any $(C,p_1,\ldots, p_n,L)\in \Pic_{\GG}$.
\end{notation}

\begin{lemma}\label{lem:dim_cohomology}
    On $\bbP^1$ we have
    \begin{align}\label{eq:dim_h0}
        h^0(\bbP^1,\ccO(k))&=\begin{cases}
            k+1 & \mbox{if } k\geq 0\\
            0 & \mbox{if  }k\leq -1.
        \end{cases}\\
        \label{eq:dim_h1}
        h^1(\bbP^1,\ccO(k))&=\begin{cases}
            0 & \mbox{if } k\geq -1\\
            -k-1 & \mbox{if  }k\leq -2.
    \end{cases}
    \end{align}
\end{lemma}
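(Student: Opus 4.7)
The plan is to treat the two formulas separately, first computing $h^0$ directly from global sections of $\mathcal{O}(k)$ and then deducing $h^1$ from $h^0$ via Serre duality (or, alternatively, via the Euler characteristic computed by Riemann--Roch).

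For \Cref{eq:dim_h0}, I would identify $H^0(\bbP^1,\ccO(k))$ with the space of homogeneous polynomials of degree $k$ in the two homogeneous coordinates $[x_0:x_1]$ on $\bbP^1$. When $k\geq 0$, a basis is given by the monomials $x_0^i x_1^{k-i}$ for $i=0,\ldots,k$, giving dimension $k+1$. When $k\leq -1$, no nonzero homogeneous polynomial of negative degree exists, so $h^0(\bbP^1,\ccO(k))=0$. Concretely, a global section on the standard affine cover $\{U_0,U_1\}$ of $\bbP^1$ corresponds to a pair of regular functions on $\bbA^1$ agreeing after the change of coordinates $x_1/x_0 \leftrightarrow x_0/x_1$ with the appropriate twist by $k$, which forces this polynomial description.

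For \Cref{eq:dim_h1}, I would invoke Serre duality on $\bbP^1$: since $\omega_{\bbP^1}\simeq \ccO(-2)$, we have
\[
    h^1(\bbP^1,\ccO(k)) \;=\; h^0(\bbP^1,\ccO(-k-2)).
\]
Applying \Cref{eq:dim_h0} to the right-hand side: if $-k-2\geq 0$, i.e.\ $k\leq -2$, we get $-k-2+1 = -k-1$; if $-k-2\leq -1$, i.e.\ $k\geq -1$, we get $0$. This matches \Cref{eq:dim_h1}.

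There is no real obstacle here: both assertions are foundational facts about $\bbP^1$. If one wished to avoid Serre duality, a direct computation of $H^1$ via the \v{C}ech complex for the standard affine cover $\{U_0,U_1\}$ of $\bbP^1$ gives the same answer, since $H^1(\bbP^1,\ccO(k))$ is the cokernel of $\Gamma(U_0,\ccO(k))\oplus \Gamma(U_1,\ccO(k))\to \Gamma(U_0\cap U_1,\ccO(k))$, which one sees has dimension $\max(0,-k-1)$ by inspecting the Laurent monomials in $x_0/x_1$ that are not expressible as the difference of a regular function on $U_0$ and one on $U_1$.
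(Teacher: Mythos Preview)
Your proposal is correct and, in fact, more detailed than the paper's own proof, which simply says this is a well-known computation and refers to \cite[Theorem 5.1]{Hartshorne_AG}. Your direct computation of $h^0$ via homogeneous polynomials together with Serre duality for $h^1$ is exactly the standard argument underlying that reference, so the approaches agree.
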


\begin{proof}
    This is a well-known computation. See for example \cite[Theorem 5.1]{Hartshorne_AG}
\end{proof}

\subsection{Decorated dual graphs on products of Picard stacks}\label{subsec:decorated_graphs_prod_Pic}

We generalize \Cref{subsec:weighted_dual_trees} by associating decorated dual graphs to certain products of Picard stacks and proving that the loci with fixed decorated dual graphs give a stratification. This stratification will be useful in \Cref{sec:application_stable_maps} when we study stable maps to a toric variety.

\begin{assumption}\label{assumption:toric}
    We fix a smooth projective toric variety $X$ with fan $\Sigma$, a maximal cone $\sigma \in \Sigma$ and an effective non-zero curve class $\beta\in A_1(X)$. Here, $A_1(X)$ denote the Chow group of 1-dimensional cycles on $X$ modulo rational equivalence.
\end{assumption}

\begin{remark}\label{rmk:why_maximal_cone}
    We present a well-known fact in toric geometry. See for example \cite[Lemma 3.2.1]{Cobos}. Fix $X$ and $\sigma$ as in \Cref{assumption:toric}. By \cite[Theorem 4.1.3]{CLS} have a short exact sequence
    \[
        0 \to M \to \bigoplus_{\rho \in \Sigma(1)} \bbZ D_\rho \to \pic(X) \to 0
    \]
    with $M$ the character lattice of the torus in $X$. Then $\{[D_\rho]\colon \rho \notin \sigma(1)\}$ is a $\bbZ$-basis of $\pic(X)$. Furthermore, let $\sigma(1)=\{\tau_1,\ldots, \tau_n\}$, let $\{u_{\tau_1},\ldots, u_{\tau_n}\}$ be the associated ray generators, which form a basis of the co-character lattice $N$, and let $\{m_1,\ldots, m_n\}$ be the dual basis. Then, by construction, for each $\rho \notin \sigma$ we have that
    \begin{equation}\label{eq:linear_expression_urho}
        u_\rho = \sum_{i=1}^n \langle m_i,u_\rho \rangle u_{\tau_i}
    \end{equation}
    and 
    \[
        \div(\chi^{m_i}) = D_{\tau_i} + \sum_{\rho \notin \sigma(1)} \langle m_i, u_\rho \rangle D_\rho,
    \]
    thus we have the following relation in $\Pic(X)$:    \begin{equation}\label{eq:linear_equivalence_dual_basis}
        D_{\tau_i} \sim -\sum_{\rho \notin \sigma(1)} \langle m_i, u_\rho \rangle D_\rho.
    \end{equation}
\end{remark}

\begin{notation}\label{notation:product_Pics}
    Given $g,n\in \bbZ_{\geq 0}$,
    we consider the following fibre product of Picard stacks over $\ffM_{g,n}$
    \begin{equation}\label{eq:product_of_Pics}
        \Pic_{g,n,X,\beta,\sigma} \coloneqq 
        \fprodM_{\rho \notin \sigma(1)}\Pic_{g,n,\beta \cdot D_\rho}.
    \end{equation} We let $\ffC\to \Pic_{g,n,X,\beta,\sigma}$ denote the pullback of the universal curve on $\ffM_{g,n}$. On $\ffC$, we have universal line bundles $\ffL_\rho$ for $\rho \notin \sigma(1)$, each of them is the pullback of the universal line bundle on the $\rho$-th factor. We will denote $\Pic_{g,n,X,\beta,\sigma}$ simply by $\ffP$ when there is no risk of confusion. 
\end{notation}

In light of \Cref{rmk:why_maximal_cone}, we construct extra ``universal'' line bundles $\ffL_\tau$ on $\ffC$ for  $\tau \in \sigma(1)$ as follows.
Since $X$ is smooth, for each $\rho \notin \sigma$ there is a unique $\bbZ$-linear combination in the co-character lattice of $X$
\[
    u_\rho = \sum_{\tau \in \sigma(1)} \alpha_\tau^\rho u_\tau.
\]
In the notation of  \Cref{eq:linear_expression_urho}, we have that $\alpha_{\tau}^{\rho} = \langle m_i,u_\rho\rangle$. Then, motivated by \Cref{eq:linear_equivalence_dual_basis}, we define for each $\tau \notin \sigma(1)$ the line bundle
\begin{equation}\label{eq:universal_line_bundle}
   \ffL_\tau \coloneqq \otimes_{\rho\notin \sigma(1)} \ffL_\rho^{-\alpha_\tau^\rho}.
\end{equation}

\begin{notation}\label{notation:product_Pics_stable}
    Given $X,\sigma, \beta$ as in \Cref{assumption:toric}, choose a very ample line bundle $\otimes_{\rho \in \Sigma(1)} \ccO_X(D_\rho)^{\otimes \beta_\rho}$ on $X$ 
    We denote by 
    \begin{equation}\label{eq:stable_prod_Pic}
         \Pic_{g,n,X,\beta,\sigma}^{st}
    \end{equation}
    the open substack of $ \Pic_{g,n,X,\beta,\sigma}$ parametrizing $(C,p_1,\ldots, p_n,(L_\rho)_{\rho \in \Sigma(1)})$ with 
    \begin{equation}\label{eq:stability_stable_maps}
        \left(\bigotimes_{\rho\in \Sigma(1)} L_\rho^{\beta^\rho}\right)^{\otimes 3} \otimes \omega_C\left(\sum_{i=1}^n p_i\right)
    \end{equation}
    ample.
\end{notation}

As in \Cref{subsec:weighted_dual_trees}, we consider trees with extra information and explain how to attach them to elements in $\Pic_{0,n,X,\beta,\sigma}$, generalizing Constructions \ref{constr:dual_graph} and \ref{constr:weighted_dual_graph}.

\begin{definition}\label{def:decorated_marked_tree}
    A decorated $n$-marked tree is a triple 
    \[
        \GG = (G, m\colon \{1,\ldots, n\}\to V(G), c\colon V(G)\to A_1(X))
    \]
    with $(G,m)$ an $n$-marked tree. We say that $\GG$ is $\beta$-decorated or has class $\beta$ if $\sum_{v\in V(G)} c(v) = \beta$. The set of all $\beta$-decorated $n$-marked trees is denoted by $\Gamma_{0,n}(X,\beta)$.    
    An isomorphism between two decorated $n$-marked trees $\GG_1,\GG_2$ is an isomorphism of the graphs $G_1,G_2$ which commutes with the morphisms $m_i$ and $c_i$. 
    
    We say that $\GG \in \Gamma_{0,n}(X,\beta)$ is stable if
    \begin{equation}\label{eq:stability_decorated_trees}
        \#m^{-1}(v) + \mathrm{val}(v) \geq 3 \ \text{ for every } v\in V(G) \text{ with } c(v)=0.
    \end{equation}
    We denote by $\Gamma_{0,n}^{st}(X,\beta)$ the set of stable $\beta$-decorated $n$-marked trees.

    We say that $\GG \in \Gamma_{0,n}^{st}(X,\beta)$ is irreducible if, for every $v\in V(G)$, either $c(v)=0$ or the curve class $c(v)\in A_1(X)$ can be represented by an irreducible curve. We denote by $\Gamma_{0,n}^{irred}(X,\beta)$ the set of all irreducible stable $\beta$-decorated $n$-marked trees.
\end{definition}

Note that $\Gamma_{0,n}^{st}(X,\beta)$ is finite, while $\Gamma_{0,n}(X,\beta)$, in general, is not. Also, the assumption that $\beta$ is non-zero ensures that $\Gamma_{0,n}^{st}(X,\beta)$  is non-empty. We have a natural forgetful morphism to the following fibre product over $\Gamma_{0,n}$ 
\[
    \Gamma_{0,n}(X,\beta)\to \fprodGamma_{\rho \in \Sigma(1)} \Gamma_{0,n,\beta \cdot D_\rho}, 
\]
The $\rho$-th factor is given by
\begin{equation}\label{eq:combinatoria_projection_rho}
    \pi_\rho(G, m, c\colon V(G)\to A_1(X)) \coloneqq (G,m, \deg_\rho\colon V(G)\to \bbZ)
\end{equation}
with $\deg_\rho(v) \coloneqq c(v)\cdot D_\rho$.

\begin{construction}[Decorated dual graph]\label{constr:decorated_dual_graph}
    To an element $(C,p_1,\ldots, p_n,(L_\rho)_{\rho \notin \sigma})\in \Pic_{0,n,X,\beta,\sigma}$ we attach the following $\beta$-decorated $n$-marked tree $\GG = (G,m,c) \in \Gamma_{0,n}(X,\beta)$, called its decorated dual graph. The pair $(G,m)$ is the dual graph of $(C,p_1,\ldots, p_n)$. The morphism $c\colon V(G)\to A_1(X)$ is defined as follows: for each $v\in V(G)$, $c(v)$ is the unique element in $A_1(X)$ such that $c(v)\cdot D_\rho = \deg(\ffL_\rho\mid_{C_v})$ for each $\rho \notin \sigma$.

    Note that $\GG$ is stable if and only if $(C,p_1,\ldots, p_n,(L_\rho)_{\rho \notin \sigma})\in \Pic_{0,n,X,\beta,\sigma}^{st}$.
\end{construction}

\begin{definition}
    For each $\GG \in \Gamma_{0,n}(X,\beta)$, we let $\Pic_{\GG}$ denote the locally closed substack of $\Pic_{0,n,X,\beta,\sigma}$ parametrizing $(C,p_1,\ldots, p_n,(L_\rho)_{\rho \notin \sigma})$ whose decorated dual graph is isomorphic to $\GG$.
\end{definition}

\begin{remark}\label{rmk:codimension_prod_pics_is_number_of_edges}
    For each $\GG \in \Gamma_{0,n}(X,\beta)$, it holds that
    \[
        \codim_{\Pic_{0,n,X,\beta,\sigma}} (\Pic_{\GG}) = \#E(\GG).
    \]
    Furthermore, if $\GG \in \Gamma^{st}_{0,n}(X,\beta)$ then
    \[
         \codim_{\Pic^{st}_{0,n,X,\beta,\sigma}} (\Pic_{\GG}) = \codim_{\Pic_{0,n,X,\beta,\sigma}} (\Pic_{\GG}) = \#E(\GG).
    \]
\end{remark}

We have the following generalizations of \Cref{lem:irreducible_strata_Pic}, \Cref{cor:stratification_Pic} and \Cref{lem:h1_constant_Pic}.

\begin{lemma}\label{lem:irreducible_strata_prod_Pic}
    For each $\GG \in \Gamma_{0,n}(X,\beta)$, the stack $\Pic_{\GG}$ is irreducible.
\end{lemma}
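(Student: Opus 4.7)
The plan is to mimic the proof of \Cref{lem:irreducible_strata_Pic}, the key difference being that now the stratum $\Pic_{\GG}$ parametrises a tuple of line bundles $(L_\rho)_{\rho \notin \sigma(1)}$ rather than a single one. So instead of a product of Picard stacks, I would use a fibre product of Picard stacks over $\ffM_{0,n}$ indexed by the rays $\rho \notin \sigma(1)$, in the spirit of \Cref{notation:product_Pics}.

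Concretely, write $\GG = (G,m,c)$ and for each vertex $v \in V(G)$ set
\[
    M_v = m^{-1}(v) \sqcup \{e \in E(\GG) \colon v \in e\},
\]
which is the marking set on the normalisation of the component $C_v$ once nodes are replaced by markings. For each $v$, the $\rho$-th degree prescribed by $\GG$ on $C_v$ is $c(v)\cdot D_\rho$. Then consider the fibre product
\[
    \ffP_v \coloneqq \fprodM[M_v]_{\rho \notin \sigma(1)} \Pic^{sm}_{0,M_v,c(v)\cdot D_\rho},
\]
where the fibre product is taken over $\ffM^{sm}_{0,M_v}$. Each factor is smooth and irreducible over the smooth irreducible base $\ffM^{sm}_{0,M_v}$ (by the smoothness/irreducibility of $\Pic_{0,M,d}$ recalled after \Cref{notation:Pic_stable}), so $\ffP_v$ is itself smooth and irreducible. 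Taking the product over all vertices, the stack
\[
    \left(\prod_{v \in V(G)} \ffP_v \right) \Big/ \mathrm{Aut}(\GG)
\]
is irreducible.

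Next I would construct a surjective morphism from this stack onto $\Pic_{\GG}$ by the standard gluing construction: the data of a point in the product gives, for each vertex $v$, a smooth $M_v$-marked rational curve together with line bundles of the prescribed degrees, and these can be glued at the nodes (indexed by the edges of $\GG$) using the two copies of each edge marking to produce a prestable $n$-marked curve of genus $0$ with dual graph isomorphic to $G$, together with line bundles $L_\rho$ of total degrees $\beta \cdot D_\rho$. The quotient by $\mathrm{Aut}(\GG)$ accounts for the ambiguity in labelling the vertices and edges. Surjectivity on points is immediate from the definition of $\Pic_{\GG}$ in terms of the decorated dual graph (\Cref{constr:decorated_dual_graph}), and the image of an irreducible stack under a morphism is irreducible.

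There is no serious obstacle here beyond checking that the gluing map is well-defined as a morphism of stacks; this is standard and identical to the argument in \Cref{lem:irreducible_strata_Pic}, the only added bookkeeping being that one has to glue several line bundles simultaneously rather than just one, which is harmless because the gluing of each $L_\rho$ is independent at each node.
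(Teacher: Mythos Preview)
Your proposal is correct and takes essentially the same approach as the paper: the paper's proof simply reads ``Analogous to \Cref{lem:irreducible_strata_Pic}'', and you have spelled out precisely the intended analogue, replacing each factor $\Pic^{sm}_{0,M_v,\deg(v)}$ by the fibre product $\fprodM[M_v]_{\rho \notin \sigma(1)} \Pic^{sm}_{0,M_v,c(v)\cdot D_\rho}$ over $\ffM^{sm}_{0,M_v}$.
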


\begin{proof}
    Analogous to \Cref{lem:irreducible_strata_Pic}.
\end{proof}

\begin{corollary}\label{cor:stratification_prod_Pic}
    The following are stratifications by locally closed irreducible substacks 
    \begin{align*}
        \Pic_{0,n,X,\beta,\sigma} = \bigsqcup_{\GG \in \Gamma_{0,n}(X,\beta)/_\simeq} \Pic_{\GG}, & & \Pic_{0,n,X,\beta,\sigma}^{st} = \bigsqcup_{\GG \in \Gamma_{0,n}^{st}(X,\beta)/_\simeq} \Pic_{\GG}.
    \end{align*}
\end{corollary}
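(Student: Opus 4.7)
My plan is to imitate the proof of \Cref{cor:stratification_Pic} essentially verbatim, now that the decorated analogues of the required ingredients are already assembled. The three pieces are: (i) \Cref{constr:decorated_dual_graph}, which canonically assigns to every geometric point of $\Pic_{0,n,X,\beta,\sigma}$ an isomorphism class in $\Gamma_{0,n}(X,\beta)$; (ii) \Cref{lem:irreducible_strata_prod_Pic}, which gives irreducibility of each $\Pic_{\GG}$; and (iii) the final sentence of \Cref{constr:decorated_dual_graph}, which identifies the open substack $\Pic^{st}_{0,n,X,\beta,\sigma}$ with the union of the stable strata.

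For the first stratification, I would check that each $\Pic_{\GG}$ is locally closed in $\Pic_{0,n,X,\beta,\sigma}$. Fixing the underlying $n$-marked topological type $(G,m)$ is a locally closed condition on $\ffM_{0,n}$ by the standard description of its boundary. Over such a stratum, fixing the component-wise degrees $\deg(L_\rho\mid_{C_v})$ for each $\rho \notin \sigma(1)$ is an open-and-closed condition on each factor $\Pic_{0,n,\beta\cdot D_\rho}$ because degrees of line bundles are locally constant in flat families; pulling back to the fibre product $\Pic_{0,n,X,\beta,\sigma}$ over $\ffM_{0,n}$ keeps this property. Since $\{[D_\rho] : \rho \notin \sigma(1)\}$ is a $\bbZ$-basis of $\pic(X)$ by \Cref{rmk:why_maximal_cone}, these degrees pin down $c(v) \in A_1(X)$, so the locus is exactly $\Pic_{\GG}$. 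The strata are pairwise disjoint by construction and cover $\Pic_{0,n,X,\beta,\sigma}$ because every geometric point has a decorated dual graph. Combined with \Cref{lem:irreducible_strata_prod_Pic}, this gives the first displayed stratification.

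For the stable version, the last sentence of \Cref{constr:decorated_dual_graph} records that a tuple $(C,p_1,\ldots,p_n,(L_\rho)_{\rho\notin\sigma(1)})$ lies in $\Pic^{st}_{0,n,X,\beta,\sigma}$ if and only if its decorated dual graph is stable in the sense of \Cref{def:decorated_marked_tree}. Intersecting the first stratification with the open substack $\Pic^{st}_{0,n,X,\beta,\sigma}$ therefore retains exactly the strata indexed by $\Gamma^{st}_{0,n}(X,\beta)/_\simeq$, yielding the second displayed stratification. There is no real obstacle in the argument; the only point worth spelling out is the locally closed claim on a fibre product of Picard stacks, which reduces to the corresponding claim on each factor.
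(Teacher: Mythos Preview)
Your proof is correct and follows essentially the same approach as the paper: the paper's proof simply cites \Cref{constr:decorated_dual_graph}, \Cref{lem:irreducible_strata_prod_Pic}, and the analogue of \Cref{rmk:stability_is_combinatorial} (which is exactly the final sentence of \Cref{constr:decorated_dual_graph} that you invoke). Your additional justification of the locally-closed claim via the boundary stratification of $\ffM_{0,n}$ and the local constancy of degrees on each Picard factor is a reasonable elaboration of what the paper leaves implicit.
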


\begin{proof}
    Immediate by \Cref{constr:decorated_dual_graph} and \Cref{lem:irreducible_strata_prod_Pic} using the analogue of \Cref{rmk:stability_is_combinatorial}, that is, an element in $\Pic_{0,n,X,\beta,\sigma}$ is stable if and only if its decorated dual graph is stable.
\end{proof}

\begin{lemma}\label{lem:h1_constant_prod_Pic}
    For each $\rho \in \Sigma(1)$ and each $\GG \in \Gamma_{0,n}(X,\beta)$, the morphisms $\Pic_{0,n,X,\beta,\sigma}\to \bbZ$ mapping $(C,p_1,\ldots, p_n,(L_\rho)_{\rho \notin \sigma})$ to $h^0(C,L_\rho)$ and to $h^1(C,L_\rho)$ are constant on $\Pic_{\GG}$.
\end{lemma}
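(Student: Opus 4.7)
The plan is to split into two cases according to whether $\rho \in \sigma(1)$ or not, and in both cases to reduce to the analogous statement for a single Picard stack, \Cref{lem:h1_constant_Pic}.

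First, suppose $\rho \notin \sigma(1)$. Then $\ffL_\rho$ is by construction the pullback of the universal line bundle along the projection
\[
    p_\rho \colon \Pic_{0,n,X,\beta,\sigma} \longrightarrow \Pic_{0,n,\beta \cdot D_\rho}.
\]
The decorated marked tree $\GG$ projects via \Cref{eq:combinatoria_projection_rho} to a weighted marked tree $\pi_\rho(\GG) \in \Gamma_{0,n,\beta\cdot D_\rho}$, and by construction $p_\rho$ sends $\Pic_{\GG}$ into $\Pic_{\pi_\rho(\GG)}$. Since formation of $h^0$ and $h^1$ commutes with pullback along $p_\rho$, the result follows from \Cref{lem:h1_constant_Pic} applied to the $\rho$-th factor.

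Next, suppose $\tau \in \sigma(1)$. Here $\ffL_\tau$ is defined by the formula \eqref{eq:universal_line_bundle} in terms of the $\ffL_\rho$ with $\rho \notin \sigma(1)$, so it is no longer the pullback of a single universal line bundle. Nonetheless, for any geometric point $(C, p_1, \ldots, p_n, (L_\rho)_{\rho \notin \sigma(1)})$ of $\Pic_{\GG}$ and any vertex $v \in V(\GG)$ with corresponding component $C_v$, the degree
\[
    \deg(L_\tau\mid_{C_v}) = -\sum_{\rho \notin \sigma(1)} \alpha_\tau^\rho \deg(L_\rho\mid_{C_v}) = c(v) \cdot D_\tau
\]
is determined by the decoration $c\colon V(\GG) \to A_1(X)$ of $\GG$, where the second equality follows from \Cref{eq:linear_equivalence_dual_basis}. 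Thus $L_\tau$ behaves exactly like the line bundles considered in \Cref{lem:h1_constant_Pic}: its total degree and the degree on each irreducible component depend only on $\GG$, not on the point of $\Pic_{\GG}$.

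To conclude, I would run the same normalization-sequence argument as in the proof of \Cref{lem:h1_constant_Pic}. Writing $\nu\colon \widetilde{C} \to C$ for the normalization and using the short exact sequence
\[
    0 \to L_\tau \to \nu_\ast \nu^\ast L_\tau \to \bigoplus_{e \in E(\GG)} k(x_e) \to 0,
\]
the long exact sequence in cohomology combined with \Cref{lem:dim_cohomology} (applied component by component to $\widetilde{C} = \bigsqcup_{v} \bbP^1$ with $\nu^\ast L_\tau\mid_{C_v}$ of degree $c(v)\cdot D_\tau$) expresses $h^0(C,L_\tau)$ and $h^1(C,L_\tau)$ purely in combinatorial terms of $\GG$. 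There is no real obstacle here: the only slightly delicate point is verifying that the degree of $L_\tau$ on each component is combinatorial, and this is exactly the content of the computation above via \Cref{eq:linear_equivalence_dual_basis}.
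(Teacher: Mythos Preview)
Your proof is correct and follows the same approach as the paper, which simply says ``Analogous to \Cref{lem:h1_constant_Pic}.'' You have spelled out the details, including the case split $\rho \notin \sigma(1)$ versus $\tau \in \sigma(1)$; this split is not strictly necessary (the normalization-sequence argument of the second case works uniformly once you know the multidegree of $L_\rho$ is determined by $\GG$), but it is a reasonable way to make explicit that the derived line bundles $\ffL_\tau$ of \eqref{eq:universal_line_bundle} also have combinatorially determined multidegree.
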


\begin{proof}
    Analogous to \Cref{lem:h1_constant_Pic}.
\end{proof}

\begin{notation}\label{notation:h1_decorated_graph}
    Given $\GG \in \Gamma_{0,n}(X,\beta)$ we write $h^0(\GG,L_\rho)$ (resp. $h^1(\GG,L_\rho)$) for the value of $h^0(C,L_\rho)$ (resp. $h^1(C,L_\rho)$) on any $(C,p_1,\ldots, p_n,(L_\rho)_{\rho \notin \sigma})\in \Pic_{\GG}$. Equivalently, $h^0(C,L_\rho) = h^0(\pi_\rho(\GG))$ and $h^1(C,L_\rho) = h^1(\pi_\rho(\GG))$, where $\pi_\rho \colon \Gamma_{0,n}(X,\beta) \to \Gamma_{0,n,\beta \cdot D_\rho}$ is the morphism in \Cref{eq:combinatoria_projection_rho}.
\end{notation}

\section{Abelian cones on the Picard stack}\label{sec:abelian_cones_over_Pic}

Let $\ffP = \Pic_{0,n,d}$ denote the Picard stack introduced in \Cref{notation:Pic} and let $\ffP^{st} = \Pic_{0,n,d}^{st}$ be the open substack introduced in \Cref{notation:Pic_stable}.

By \cite{Chang-Li-maps-with-fields}, the abelian cone
\begin{equation}\label{eq:definition_SL}
    S(\ffL)\coloneqq \Spec_{\ffP^{st}} \Symm (R^1\pi_\ast (\ffL^\vee \otimes \omega_\pi)).
\end{equation}
is the moduli space parametrizing $(C,L,s)$ with $(C,L)$ in $\ffP$ and $s\in H^0(C,L)$.

In this section we explore the geometry of the abelian cone $S(\ffL)$ as a warm-up to the study of components of spaces of stable maps to toric varieties in \Cref{sec:application_stable_maps}. Informally speaking, the main difference is that in \Cref{sec:abelian_cones_over_Pic} we work with a single line bundle, while in \Cref{sec:application_stable_maps} we work with several line bundles. Although  \Cref{thm:components_abelian_cone_SL} is not used in our applications, we find that explaining first the case of a single line bundle improves the exposition and readability of \Cref{sec:application_stable_maps}.

\subsection{Irreducible components of $S(\ffL)$}\label{subsec:components_cone_Pic}

Our next goal is to describe the irreducible components of the abelian cone $S(\ffL)$. This can be done using \Cref{corollary:decomposition_abelian_cone_stratification} if we find a stratification of the base $\Pic_{0,n,d}^{st}$ satisfying \Cref{assumption:stratification}.

Let $\ccF=R^1\pi_\ast (\ffL^\vee \otimes \omega_\pi)$, which is a coherent sheaf on $\Pic_{0,n,d}^{st}$. We have already constructed a stratification of $\Pic_{0,n,d}^{st}$ in \Cref{cor:stratification_Pic}, indexed by stable weighted dual trees.
The first step is to understand the rank of $\ccF$ at points of $\Pic_{0,n,d}^{st}$, as we need it to be constant on each stratum. Given $P=(C,p_1,\ldots, p_n,L)\in \Pic_{0,n,d}^{st}$, combining cohomology and base-change with Serre duality, we have that
\begin{equation}\label{eq:rank_is_h0}
    \rk((R^1\pi_\ast (\ffL^\vee \otimes \omega_\pi))\mid_P) = h^1(C,L^\vee\otimes\omega_C) =  h^0(C,L).
\end{equation}
In particular, under \Cref{assumption:Pic_stable_nonempty},
\begin{equation}\label{eq:generic_rank_is_h0_P1}
    \grk_{\Pic_{0,n,d}^{st}}(R^1\pi_\ast (\ffL^\vee \otimes \omega_\pi)) = 
    \grk_{\Pic_{0,n,d}^{sm}}(R^1\pi_\ast (\ffL^\vee \otimes \omega_\pi)) = 
    h^0(\bbP^1,\ccO_{\bbP^1}(d)),
\end{equation}
which is computed in \Cref{lem:dim_cohomology}.

\begin{notation}
    For $\GG\in\Gamma_{0,n,d}^{st}$, let 
    \begin{equation}\label{eq:definition_iG}
        i_{\GG} = h^0(\GG) - h^0(\bbP^1,\ccO_{\bbP^1} (d)),
    \end{equation}
    with $h^0(\GG)$ as in \Cref{notation:h1_weighted_graph}, and
    \begin{equation}\label{eq:definition_dG}
        d_{\GG} \coloneqq i_{\GG} - \codim_{\Pic^{st}_{0,n,d}}(\Pic_{\GG}) = i_{\GG} - \#E(\GG).
    \end{equation}
    The last equality follows from \Cref{rmk:codimension_is_number_of_edges}.
\end{notation}

\begin{lemma}\label{lem:stratification_Pic_satisfies_assumptions}
    Let $n\in \bbZ_{\geq 0}$ and $d\in \bbZ$ satisfying \Cref{assumption:Pic_stable_nonempty}.
    Let $B=\Pic_{0,n,d}^{st}$ and $\ccF=R^1\pi_\ast (\ffL^\vee \otimes \omega_\pi)$. 
    The collection of stratifications
    \[
        B_i = \bigsqcup_{\substack{\GG \in \Gamma_{0,n,d}^{st}/_\simeq\\i_{\GG} = i}} \Pic_{\GG}
    \]
    for $i\in\{i_{\GG}\}_{\GG \in \Gamma^{st}_{0,n,d}}$
    satisfies \Cref{assumption:stratification}.
\end{lemma}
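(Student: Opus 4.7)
The plan is to verify each of the conditions in \Cref{assumption:stratification} (in the stack version of \Cref{assumption:stratification_stacks}): that the defined $B_i$ are indeed the rank-jumping loci of $\ccF$, that each $\Pic_{\GG}$ is a locally closed irreducible substack of $B = \Pic^{st}_{0,n,d}$, and that the decomposition is finite. Most of the relevant ingredients have already been established, so the argument is largely a matter of assembling them.

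First I would perform the rank computation. For any $P = (C,p_1,\ldots,p_n,L) \in B$, \Cref{eq:rank_is_h0} gives $\rk(\ccF|_P) = h^0(C,L)$, and by \Cref{lem:h1_constant_Pic} this value is constant on each stratum $\Pic_{\GG}$ and equal to $h^0(\GG)$ in the sense of \Cref{notation:h1_weighted_graph}. The generic rank is $f = h^0(\bbP^1,\ccO_{\bbP^1}(d))$ by \Cref{eq:generic_rank_is_h0_P1} (which uses \Cref{assumption:Pic_stable_nonempty} to ensure that $\Pic^{sm}_{0,n,d}$ is a non-empty open of $B$), so the rank at any point of $\Pic_{\GG}$ equals $f + i_{\GG}$ with $i_{\GG}$ as in \Cref{eq:definition_iG}. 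In particular the values $i_{\GG}$ range over $\{0,\ldots,m-f\}$, where $m = \max_{\GG} h^0(\GG)$ is the maximal rank.

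Combining this with the stratification of \Cref{cor:stratification_Pic}, grouping strata by the value of $i_{\GG}$ yields exactly the proposed decomposition of each rank-jumping locus $B_i$. Each $\Pic_{\GG}$ is locally closed in $B$ by \Cref{cor:stratification_Pic} and irreducible by \Cref{lem:irreducible_strata_Pic}, and the whole decomposition is finite because $\Gamma^{st}_{0,n,d}$ is finite. Thus every condition of \Cref{assumption:stratification} is met. No real obstacle is anticipated here: the content of the lemma is essentially that the combinatorial stratification by weighted dual graphs refines the rank-stratification of $\ccF$ finely enough to feed into \Cref{thm:decomposition_abelian_cone_stratification_stacks}, and the crucial input---that $h^0(C,L)$ is determined by the weighted dual graph---was already isolated in \Cref{lem:h1_constant_Pic}.
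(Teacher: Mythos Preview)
Your proposal is correct and follows essentially the same approach as the paper's own proof: you verify finiteness via finiteness of $\Gamma^{st}_{0,n,d}$, invoke \Cref{cor:stratification_Pic} and \Cref{lem:irreducible_strata_Pic} for the locally closed irreducible strata, and use \Cref{eq:rank_is_h0}, \Cref{eq:generic_rank_is_h0_P1}, and the definition of $i_{\GG}$ to identify the rank on each stratum. The paper's argument is just a terser version of what you wrote.
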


\begin{proof}
    Each stratification is finite because $\Gamma^{st}_{0,n,d}$ is finite. By \Cref{cor:stratification_Pic}, each stratum $\Pic_{\GG}$ is locally closed and irreducible and they stratify $\Pic_{0,n,d}^{st}$ as $\GG$ varies in $\Gamma^{st}_{0,n,d}$. Thus, it only remains to check that $\rk(\ccF)$ is constant on $\Pic_{\GG}$ with value $\grk_{B}(\ccF) + i_{\GG}$. This follows from the definition of $i_{\GG}$ in \Cref{eq:definition_iG} combined with \Cref{eq:generic_rank_is_h0_P1,eq:rank_is_h0}.
\end{proof}

To conclude, we apply \Cref{corollary:decomposition_abelian_cone_stratification} to the stratifications we have just constructed. Before that, we introduce some handy notation.

\begin{notation}
    Let $\pi$ denote the natural projection $S(\ffL)\to \ffP^{st}$. For $\GG\in \Gamma^{st}_{0,n,d}$, we write
    \[
        S(\ffL)_{\GG} \coloneqq \pi^{-1}(\Pic_{\GG}).
    \]
    Following \Cref{not:closure}, we write $\overline{S(\ffL)_{\GG}}$ for $\cl_{S(\ffL)}(S(\ffL)_{\GG})$.
\end{notation}

\begin{theorem}\label{thm:components_abelian_cone_SL}
    Let $n\in \bbZ_{\geq 0}$ and $d\in \bbZ$ satisfying \Cref{assumption:Pic_stable_nonempty}. 
    The irreducible components of $S(\ffL)$ are
    \begin{equation}\label{eq:components_abelian_cone_SL}
        \left\{\overline{S(\ffL)_{\GG}}\colon 
        \GG\in \Gamma_{0,n,d}^{st}/_\simeq, 
        d_{\GG}\geq d_{\GG'} \, \forall\, \GG' \in \Gamma_{0,n,d}^{st} \text{ with } \Pic_{\GG}\subseteq \overline{\Pic_{\GG'}}\right\}
    \end{equation}
\end{theorem}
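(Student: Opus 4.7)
The plan is to deduce the theorem as a direct corollary of \Cref{thm:decomposition_abelian_cone_stratification_stacks}, applied to the abelian cone $\pi\colon S(\ffL)\to \ffP^{st}$ equipped with the stratification constructed in \Cref{lem:stratification_Pic_satisfies_assumptions}. First I would verify that the hypotheses of \Cref{assumption:stratification_stacks} are met: the base $\ffP^{st} = \Pic_{0,n,d}^{st}$ is a smooth Noetherian Artin stack, and the sheaf $\ccF = R^1\pi_\ast(\ffL^\vee \otimes \omega_\pi)$ is coherent by cohomology and base change. By \Cref{lem:stratification_Pic_satisfies_assumptions}, grouping the strata $\Pic_{\GG}$ according to the common value of $i_{\GG}$ yields, for each integer $i$ in the (finite) range attained, a finite stratification of $B_i$ by irreducible locally closed substacks on which $\rk(\ccF)$ is constantly equal to $\grk_{\ffP^{st}}(\ccF) + i$.

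Next I would translate the combinatorial data $\mathfrak{d}_{i,j}$ of \Cref{eq:definition_dij_stratification} into the language of weighted dual trees. On the stratum $\Pic_{\GG}$, the rank of $\ccF$ is shifted from the generic value by $i_{\GG}$ by the choice of grouping, and \Cref{rmk:codimension_is_number_of_edges} gives $\codim_{\ffP^{st}}(\Pic_{\GG}) = \#E(\GG)$. Hence
\[
    \mathfrak{d}_{i,j} \;=\; i_{\GG} - \#E(\GG) \;=\; d_{\GG},
\]
matching the definition in \Cref{eq:definition_dG}. Under the same dictionary, the containment condition $\ffD_{i,j}\subseteq \overline{\ffD_{k,\ell}}$ becomes $\Pic_{\GG}\subseteq \overline{\Pic_{\GG'}}$, so substituting these translations into \Cref{eq:components_abelian_cone_stratification_stacks} produces exactly the collection displayed in \Cref{eq:components_abelian_cone_SL}.

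There is no genuine obstacle left: the dimension bound on extra components is supplied by \Cref{lem:dimension_components_abelian_cone}, the reduction to the distinct closures $\overline{\pi^{-1}(\ffD_{i,j})}$ is handled by \Cref{lem:topological_lemma}, and the constancy of $\rk(\ccF)$ along each $\Pic_{\GG}$ was verified in \Cref{lem:stratification_Pic_satisfies_assumptions}. The only mild bookkeeping point is that the irreducible components are indexed by the set $\Gamma^{st}_{0,n,d}/_\simeq$ rather than by the pairs $(i,j)$ coming from the abstract grouping; this is harmless because the maximality condition $d_{\GG}\geq d_{\GG'}$ is intrinsic to the stratum $\Pic_{\GG}$ and independent of how strata with the same $i_{\GG}$ are collected together.
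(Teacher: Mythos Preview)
Your proposal is correct and follows essentially the same route as the paper: apply the general result on irreducible components of abelian cones to the stratification of $\Pic_{0,n,d}^{st}$ by weighted dual trees produced in \Cref{lem:stratification_Pic_satisfies_assumptions}, then identify $\mathfrak{d}_{i,j}$ with $d_{\GG}$ via \Cref{rmk:codimension_is_number_of_edges}. The only cosmetic difference is that the paper cites \Cref{corollary:decomposition_abelian_cone_stratification} (the scheme version), whereas you invoke the stack version \Cref{thm:decomposition_abelian_cone_stratification_stacks}; since the base $\Pic_{0,n,d}^{st}$ is a stack, your choice is in fact the more precise citation.
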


\begin{proof}
    The result follows from applying \Cref{corollary:decomposition_abelian_cone_stratification} to the stratifications described in \Cref{lem:stratification_Pic_satisfies_assumptions}.
\end{proof}

\begin{remark}\label{rmk:description_components_SL_is_combinatorial}
    Given $\GG,\GG'\in \Gamma^{st}_{0,n,d}$, it holds that $\Pic_{\GG}\subseteq \overline{\Pic_{\GG'}}$ if and only if $\GG'$ can be obtained (up to isomorphism) from $\GG$ by a sequence of edge contractions. 
    Similarly,
    \[
        d_{\GG}\geq d_{\GG'} \iff h^0(\GG) - \#E(\GG) \geq h^0(\GG') - \#E(\GG')
    \]
    can be determined using \Cref{lem:dim_cohomology}.
    Thus, the conditions in \Cref{eq:components_abelian_cone_SL} are purely combinatorial.
\end{remark}

\subsection{Intersection with the main component}\label{subsec:intersection_main}

Let $\GG_0\in\Gamma^{st}_{0,n,d}$ be the only weighted graph with a single vertex and no edges. Then, under \Cref{assumption:Pic_stable_nonempty},  $S^{\circ}(\ffL) \coloneqq \overline{S(\ffL)_{\GG_0}} = \overline{\pi^{-1}(\Pic^{sm}_{0,n,d})}$ is the main component of $S(\ffL)$.
In this section we recall some basic facts of the theory of limit linear series \cite{EH_Limit_linear_series,Osserman_limit_series_moduli} which help us understand the intersection of the extra components of $S(\ffL)$ with $S^{\circ}(\ffL)$.

Let $\GG\in \Gamma^{st}_{0,n,d}$. Then $S(\ffL)_{\GG}\cap S^{\circ}(\ffL)$ parametrizes triples $(C,L,s)$ with $(C,L)\in\Pic^{st}_{0,n,d}$ and $s\in H^0(C,L)$ such that $(C,L)$ has dual weighted tree $\GG$ and $(C,L,s)$ can be smoothed, in the sense that there exists a one-parameter family $\ccC\to B$, a line bundle $\ccL$ on $\ccC$, a section $t\in H^0(\ccC,\ccL)$ and a closed point $0\in B$ such that $(\ccC_0,\ccL_0,t_0)\simeq (C,L,s)$. Thus, to describe $S(\ffL)\cap S^{\circ}(\ffL)$, we need to understand when a given $(C,L,s)\in S(\ffL)_{\GG}$ can be smoothed.

If $s=0$, the smoothing is automatic. Thus, we may assume that $s$ generates a 1-dimensional subspace of $H^0(C,L)$. A necessary condition for smoothability comes from limit linear series. Indeed, a smoothing $(\ccC,\ccL,t)$ of $(C,L,s)$ defines a limit $g^0_d$ on $C$. Twisting with the components of $C$, we obtain a 1-dimensional subspace of $H^0(C,L)$ (see \cite[Proposition 2.1]{Teixidor}) in which $s$ must lie in order to be smoothable. 

Conversely, a sufficient criterion for smoothability is \cite[Corollary 5.4]{Osserman_limit_series_moduli}. Let $\rho = g- (r+1)(g-d+r)$. Then Osserman constructs a moduli scheme $G^r_d$ of limit $g^r_d$'s and proves that if the fibre over our singular curve $C$ is at most $\rho$, then limit $g^r_d$'s on $C$ can be smoothed. 

We provide some examples in the particular case that $g=d=r=0$, as these will be useful to understand the moduli space $\overline{\ccM}_{0,0}(\Bl_{pt}\bbP^2,2\ell)$ in \Cref{subsubsection:2L_blowup}. In that case, $\rho=0$ and the assumptions of \cite[Corollary 5.4]{Osserman_limit_series_moduli} are satisfied. Indeed, giving a limit $g^0_0$ on a genus 0 curve $C$ is the same as giving, for each component $C_v$ of $C$, a line bundle $L_v$ of degree $0$ and a 1-dimensional space of sections $V_v\subseteq H^0(C_v,L_v)$. Since $C$ has genus 0, then $C_v\simeq \bbP^1$ for all $v$ and the only option is $L_v\simeq \ccO_{C_v}$ and $V_v = H^0(C_v,L_v)$. Thus, we get that if $g=0$ and $d=0$ then $(C,L,s)\in S(\ffL)_{\GG}$ lies in $S^{\circ}(\ffL)$ if and only if $s$ comes from a limit $g^0_0$ on $C$.

\begin{example}\label{ex:smoothing_d_-d}
    Let $d > 0$. Consider the decorated tree $\GG$ in \Cref{fig:graph_d_-d}. We add $n$ marks at $v_2$, with $n > 3d+1$, to ensure that $\GG\in\Gamma_{0,n,0}$ is stable. In that case, \Cref{assumption:Pic_stable_nonempty} holds. Consider a point $(C,L,s) \in S(\ffL)_{\GG}$. Let $C_{1}$ and $C_{2}$ be the irreducible components of $C$, corresponding to the vertices $v_1$ and $v_2$ of $\GG$, respectively. Let $p\in C$ denote the node corresponding to $e \in E(\GG)$. A limit $g^0_0$ on $C$ must equal $H^0(C_1,\ccO_{C_1})$ and $H^0(C_2,\ccO_{C_2})$. It follows that $L\mid_{C_{1}} \simeq \ccO_{C_1}(dp)$ and $L\mid_{C_{2}} \simeq \ccO_{C_2}(-dp)$. 
    In conclusion, $(C,L,s) \in S(\ffL)_{\GG}$ is smoothable if and only if $s\mid_{C_1}$ vanishes at $p$ with order $d$.
\end{example}

\begin{figure}
        \begin{center}
            \begin{tikzpicture}
                \draw (0,0) node[] {\textbullet};
                \draw (0,0) node[anchor= south] {$d$};
                \draw (0,0) node[anchor= north] {$v_1$};
                \draw (1,0) node[] {\textbullet};
                \draw (1,0) node[anchor= south] {$-d$};
                \draw (1,0) node[anchor= north] {$v_2$};
                \draw (0.5,0) node[anchor= north] {$e$};
    
                \draw (0,0) -- (1,0);
            \end{tikzpicture}
            \caption{A decorated tree $\GG \in \Gamma_{0,0,0}$}
            \label{fig:graph_d_-d}
        \end{center}
\end{figure}%

\begin{example}\label{ex:smoothing_1_-2_1}
    Consider the decorated tree $\GG$ in \Cref{fig:graph_1_-2_1}. 
    We add $n$ marked points at $v_2$ with $n>6$. This ensures that \Cref{assumption:Pic_stable_nonempty} holds and that $\GG \in \Gamma_{0,n,0}$ is stable (see \Cref{eq:stability_weighted_trees}).    
    Consider a point $(C,L,s) \in S(\ffL)_{\GG}$. We label the components and nodes of $C$ by $C_i$ and $p_i$ following the labeling in \Cref{fig:graph_1_-2_1}. A limit $g^0_0$ on $C$ must equal $H^0(C_1,\ccO_{C_1})$, $H^0(C_2,\ccO_{C_2})$ and $H^0(C_3,\ccO_{C_3})$. It follows that $L\mid_{C_{1}} \simeq \ccO_{C_1}(p_1)$ , $L\mid_{C_{2}} \simeq \ccO_{C_2}(-p_1 - p_2)$ and $L\mid_{C_{3}} \simeq \ccO_{C_3}(p_2)$. 
    We conclude that $S(\ffL)_{\GG}\subseteq S^{\circ}(\ffL)$.
\end{example}

\begin{figure}
        \begin{center}
            \begin{tikzpicture}
                \draw (0,0) node[] {\textbullet};
                \draw (0,0) node[anchor= south] {$1$};
                \draw (1,0) node[] {\textbullet};
                \draw (1,0) node[anchor= south] {$-2$};
                \draw (2,0) node[] {\textbullet};
                \draw (2,0) node[anchor= south] {$1$};
                \draw (0,0) node[anchor= north] {$v_1$};
                \draw (1,0) node[anchor= north] {$v_2$};
                \draw (2,0) node[anchor= north] {$v_3$};
                \draw (0.5,0) node[anchor= north] {$e_1$};
                \draw (1.5,0) node[anchor= north] {$e_2$};
                
                \draw (0,0) -- (2,0);
            \end{tikzpicture}
            \caption{A decorated tree $\GG \in \Gamma_{0,0,0}$}
            \label{fig:graph_1_-2_1}
        \end{center}
\end{figure}

\section{Stable maps to toric varieties and abelian cones}\label{sec:application_stable_maps}

We recall how maps to a toric variety can be described in terms of line bundle-section pairs by \cite[Theorem 1.1]{Cox}. This description can be used to view the moduli space of stable maps as an open inside an abelian cone \Cref{eq:open_embedding_stable_maps}, generalizing the analogous statement for projective space in \cite[Section 2]{Chang-Li-maps-with-fields}. We describe components of the ambient abelian cone in \Cref{thm:components_abelian_cone_S} using \Cref{corollary:decomposition_abelian_cone_stratification} and deduce the irreducible components of stable maps in \Cref{cor:irreducible_decomposition_maps}.

\subsection{Maps to a toric variety}\label{subsec:functor_toric_variety}

Let $X$ be a smooth toric variety with fan $\Sigma$ in a lattice $N$. Let $M$ be the dual of $N$, that is, the character lattice of the torus in $X$.
A $\Sigma$-collection on a scheme $S$ is a triplet
\begin{equation}\label{eq:sigma_collection}
        \left((L_\rho)_{\rho\in \Sigma(1)}, (s_\rho)_{\rho\in \Sigma(1)}, (c_m)_{m\in M}\right)
\end{equation} 
of line bundles $L_\rho$ on $S$, sections $s_\rho \in H^0(S,L_\rho)$ and isomorphisms
    \[
        c_m\colon \otimes_{\rho\in\Sigma(1)} L_\rho^{\otimes \langle m,u_\rho\rangle} \simeq \ccO_S 
    \]
satisfying the following conditions:
    \begin{enumerate}
        \item compatibility: $c_m\otimes c_{m'} = c_{m+m'}$ for all $m,m'\in M$,
        \item\label{item:non-degneracy} non-degeneracy: for each $x\in S$ there is a maximal cone $\sigma\in\Sigma$ such that $s_\rho(x)\neq 0$ for all $\rho\not\subset \sigma$.
    \end{enumerate}
An equivalence between two $\Sigma$-collections on $S$ is a collection of isomorphisms among  the line bundles that preserve the sections and trivializations. Let $F_\Sigma$ be the functor associating to a scheme $S$ over $\bbC$ the set of isomorphism classes of $\Sigma$-collections on $S$. 

\begin{proposition}(\cite[Theorem 1.1]{Cox})\label{prop:funtor_points_toric}
    The functor $F_\Sigma$ is represented by the smooth toric variety $X$.
\end{proposition}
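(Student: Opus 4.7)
The plan is to exhibit a natural bijection $\Hom(S,X) \leftrightarrow F_\Sigma(S)$ for every $\bbC$-scheme $S$, natural in $S$, which by Yoneda yields that $X$ represents $F_\Sigma$. The starting point is a tautological $\Sigma$-collection on $X$ itself: the line bundles $\ccO_X(D_\rho)$ with their canonical sections $s_\rho$ vanishing exactly along $D_\rho$, together with the isomorphisms $c_m \colon \bigotimes_\rho \ccO_X(D_\rho)^{\otimes \langle m, u_\rho\rangle} \xrightarrow{\sim} \ccO_X$ given by multiplication by $\chi^m$, which is well-defined since $\div(\chi^m) = \sum_\rho \langle m, u_\rho\rangle D_\rho$. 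Compatibility $c_m\otimes c_{m'} = c_{m+m'}$ is obvious, and non-degeneracy holds on each affine chart $U_\sigma$ because $s_\rho$ is invertible there precisely when $\rho \notin \sigma(1)$. Pullback of this tautological $\Sigma$-collection defines the natural transformation $\Hom(-,X) \to F_\Sigma$.

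The bulk of the argument goes in the opposite direction: given a $\Sigma$-collection on $S$, construct $S \to X$. The non-degeneracy condition \eqref{item:non-degneracy} provides an open cover $\{U_\sigma^S\}_{\sigma \in \Sigma_{\max}}$ of $S$, where $U_\sigma^S \coloneqq \{x\in S \colon s_\rho(x)\neq 0\, \forall \rho \notin \sigma(1)\}$. On $U_\sigma^S$, the sections $s_\rho$ for $\rho \notin \sigma(1)$ are nowhere-vanishing, hence trivialize $L_\rho\mid_{U_\sigma^S}$. The plan is to construct a morphism $U_\sigma^S \to U_\sigma$ by giving a monoid homomorphism $\sigma^\vee\cap M \to \Gamma(U_\sigma^S,\ccO_S)$. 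For $m\in \sigma^\vee \cap M$, the image of $\chi^m$ is defined as the composition
\[
    \ccO_S \xrightarrow{c_m^{-1}} \bigotimes_{\rho\in\Sigma(1)} L_\rho^{\otimes \langle m, u_\rho\rangle} \xrightarrow{\prod_{\rho\notin\sigma(1)} s_\rho^{\langle m, u_\rho\rangle}} \bigotimes_{\rho \in \sigma(1)} L_\rho^{\otimes \langle m, u_\rho\rangle},
\]
where smoothness of $X$ at the $T$-fixed point of $U_\sigma$ (equivalently, $u_\tau$ for $\tau \in \sigma(1)$ being part of a $\bbZ$-basis of $N$) guarantees that $\sigma^\vee \cap M$ is freely generated and that the rightmost sheaf is trivialized canonically via the dual basis (as in \Cref{rmk:why_maximal_cone}). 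Compatibility of the $c_m$ ensures that this defines a monoid homomorphism, and hence a morphism to $U_\sigma = \Spec(\bbC[\sigma^\vee\cap M])$.

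The main obstacle is the gluing step: the local morphisms $U_\sigma^S \to U_\sigma \hookrightarrow X$ must agree on overlaps $U_\sigma^S \cap U_\tau^S$ to descend to a global morphism $S \to X$. This reduces, on the intersection $U_\sigma \cap U_\tau$ of affine charts, to comparing the recipe above for $\sigma$ and for $\tau$; both express the same function $\chi^m$ by writing it through different subsets of invertible $s_\rho$, and the compatibility $c_m\otimes c_{m'}=c_{m+m'}$ together with the tensor identity $s_\rho \cdot s_\rho^{-1} = 1$ makes the two agree. Finally, one checks that the two natural transformations are mutually inverse: starting from a morphism $f\colon S\to X$, pulling back and then applying the local construction recovers $f$ because on each $f^{-1}(U_\sigma)$ both procedures are dictated by the tautological collection; in the reverse order, the construction recovers the original $\Sigma$-collection up to the unique equivalence, because the trivializations $c_m$ and sections $s_\rho$ are precisely what the recipe encodes. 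Naturality in $S$ is automatic from the local nature of both constructions.
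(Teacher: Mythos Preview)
The paper does not supply its own proof of this proposition; it merely cites \cite[Theorem 1.1]{Cox}. Your sketch is essentially the standard argument from that reference, so there is nothing to compare against and your write-up is more than what the paper itself provides.
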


\begin{remark}\label{rmk:sigma_collection_equivalent_without_cm}
    If $X$ is smooth and projective and $\sigma \in \Sigma$ is a maximal cone, one can use \Cref{rmk:why_maximal_cone} to show that giving a $\Sigma$-collection on $S$ is equivalent to giving a pair 
    \[
        \left((L_\rho)_{\rho\notin \sigma}, (s_\rho)_{\rho\in \Sigma(1)}\right)
    \]
    satisfying the non-degeneracy condition. 
\end{remark}

\begin{example}
    For example, let $X=\bbP^2$ with the fan in \Cref{fig:fan_P2}. By \Cref{prop:funtor_points_toric}, a morphism $S\to \bbP^2$ is determined by the data of line bundle section pairs $(L_i,s_i)_{i=0}^2$, satisfying the non-degeneracy condition, and isomorphisms
    \begin{align*}
        c_{(1,0)}\colon L_1\otimes L_0^\vee \simeq \ccO_S,    & & c_{(0,1)}\colon L_2\otimes L_0^\vee \simeq \ccO_S.
    \end{align*}
    Thus, using the isomorphisms $c_{(1,0)}$ and $c_{(0,1)}$, this data is equivalent to the data of a single line bundle $L$ with three non-degenerate sections $(s_i)_{i=0}^2$. This recovers the usual description of morphisms to $\bbP^2$.
\end{example}

\begin{figure}
    \centering
			\begin{tikzpicture}[scale = .8]
				\fill[fill=gray!40]  (0,0) -- (2,0) -- (2,2) -- (0,2);
				\fill[fill=gray!40]  (0,0) -- (0,2) -- (-2,2) -- (-2,-2);
				\fill[fill=gray!40]  (0,0) -- (2,0) -- (2,-2) -- (-2,-2);
				\draw[thick] (0,0)--(0,2);
				\draw[thick] (0,0)--(2,0);
				\draw[thick] (0,0)--(-2,-2);
    		\draw (2.5,0) node[align=center] {$\rho_{1}$};
    		\draw (0,2.5) node[align=center] {$\rho_{2}$};
    		\draw (-2.5,-2.5) node[align=center] {$\rho_{0}$};
            \draw (1,1) node[align=center] {$\sigma_{1,2}$};
    		\draw (-1,1) node[align=center] {$\sigma_{0,2}$};
    		\draw (1,-1) node[align=center] {$\sigma_{0,1}$};    
    		\end{tikzpicture}
    		\caption{The fan of $\bbP^{2}$.}
        \label{fig:fan_P2}
\end{figure}

\subsection{Stable maps as open in an abelian cone}\label{subsec:maps_open_in_cone}

We explain how to embed the moduli space of stable maps to a toric variety as open in an abelian cone, following \cite[Section 2]{Chang-Li-maps-with-fields}.

As in \Cref{assumption:toric}, we let $X$ be a smooth projective toric variety with fan $\Sigma$, let $\sigma\in \Sigma$ be a maximal cone and $\beta \in A_1(X)$ be an effective non-zero curve class. We also fix $g,n\in \bbZ_{\geq 0}$, and consider the stack $\Pic^{st}_{g,n,X,\beta,\sigma}$ introduced in \Cref{notation:product_Pics_stable}. Recall that on the universal curve $\ffC$ over $\Pic^{st}_{g,n,X,\beta,\sigma}$ we have line bundles $\ffL_{\rho}$ for $\rho \in \Sigma(1)$ (see \Cref{notation:product_Pics} and \Cref{eq:universal_line_bundle}).

We consider the abelian cone
\begin{equation}\label{eq:definition_ambient_cone}
    S_{g,n,X,\beta,\sigma} \coloneqq \Spec_{\ffP^{st}_{g,n,X,\beta,\sigma}} \Symm\left(\oplus_{\rho \in \Sigma(1)} R^1\pi_\ast (\ffL_\rho^\vee \otimes \omega_\pi)\right),
\end{equation}
which we denote simply by $S$ if there is no risk of confusion. Then there is an open embedding
\begin{equation}\label{eq:open_embedding_stable_maps}
    \overline{\ccM}_{g,n}(X,\beta)\hookrightarrow S_{g,n,X,\beta,\sigma}
\end{equation}
cut out by the non-degeneracy condition.

\begin{remark}\label{rmk:quasimap_stability}
    There is an analogous open embedding for quasimaps. Firstly, following \Cref{notation:product_Pics_stable}, we take the open substack $\Pic_{g,n,X,\beta,\sigma}^{st,\ccQ}$ of $\Pic_{g,n,X,\beta,\sigma}$ parametrizing tuples $(C,p_1,\ldots, p_n,(L_\rho)_{\rho\in\Sigma(1)})$ with 
    \begin{equation}\label{eq:stability_stable_qmaps}
        \left(\bigotimes_{\rho\in \Sigma(1)} L_\rho^{\beta^\rho}\right)^{\otimes \epsilon} \otimes \omega_C\left(\sum_{i=1}^n p_i\right)
    \end{equation}
    ample for all $\epsilon\in \ccQ_{>0}$. Note that this is a stronger stability condition than the one in \Cref{eq:stability_stable_maps}. 
    
    The analogue of the ambient abelian cone \Cref{eq:definition_ambient_cone} is
    \begin{equation}\label{eq:definition_ambient_cone_qmaps}
     S_{g,n,X,\beta,\sigma}^{\ccQ} \coloneqq \Spec_{\ffP^{st,\ccQ}_{g,n,X,\beta,\sigma}} \Symm\left(\oplus_{\rho \in \Sigma(1)} R^1\pi_\ast (\ffL_\rho^\vee \otimes \omega_\pi)\right).
    \end{equation}
    Finally, there is an open embedding 
    \begin{equation}\label{eq:open_embedding_stable_qmaps}
        \overline{\ccQ}_{g,n}(X,\beta)\hookrightarrow S^{\ccQ}_{g,n,X,\beta,\sigma}
    \end{equation}
    cut out by the generic non-degeneracy condition. 
\end{remark}

\subsection{Components of the ambient abelian cone $S_{0,n,X,\beta,\sigma}$}\label{subsec:comps_ambient_cone}

We fix $g=0$. Our next goal is to compute the irreducible components of the abelian cone $S=S_{0,n,X,\beta,\sigma}$ using \Cref{cor:decomposition_abelian_cone}, as we did in \Cref{thm:components_abelian_cone_SL}.

Let $\ccF=\oplus_{\rho \in \Sigma(1)} R^1\pi_\ast (\ffL_\rho^\vee \otimes \omega_\pi)$, which is a coherent sheaf on $B=\Pic_{0,n,X,\beta,\sigma}^{st}$. We have already constructed a stratification of $\Pic_{0,n,X,\beta,\sigma}^{st}$ in \Cref{cor:stratification_prod_Pic}, indexed by stable decorated dual trees.

As in \Cref{eq:rank_is_h0}, we have that for $P=(C,p_1,\ldots, p_n,(L_\rho)_{\rho\notin\sigma})\in \Pic^{st}_{0,n,X,\beta,\sigma}$,
\begin{equation}\label{eq:rank_is_sum_h0}
    \rk(\ccF\mid_P) = \sum_{\rho\in\Sigma(1)}h^0(C,L_{\rho}).
\end{equation}
As in \Cref{eq:generic_rank_is_h0_P1}, we have that
\begin{equation}\label{eq:generic_rank_is_sum_h0_P1}
    \grk_{\Pic^{st}_{0,n,X,\beta,\sigma}}(\ccF)) = 
    \grk_{\Pic^{sm}_{0,n,X,\beta,\sigma}}(\ccF) = 
    \sum_{\rho \in \Sigma(1)} h^0(\bbP^1,\ccO_{\bbP^1}(\beta \cdot D_\rho)),
\end{equation}
which can be computed using \Cref{lem:dim_cohomology}.

\begin{notation}\label{notation:def_iG_dG}
    For $\GG\in\Gamma_{0,n}^{st}(X,\beta)$, let 
    \begin{equation}\label{eq:definition_iG_prod}
        i_{\GG} = \sum_{\rho \in \Sigma(1)} h^0(\GG,L_\rho) - \sum_{\rho \in \Sigma(1)} h^0(\bbP^1,\ccO_{\bbP^1} (\beta \cdot D_{\rho})),
    \end{equation}
    where $h^0(\GG,L_\rho)$ was defined in \Cref{notation:h1_decorated_graph}, and
    \begin{equation}\label{eq:definition_dG_prod}
        d_{\GG} \coloneqq i_{\GG} - \codim_{\Pic^{st}_{0,n,X,\beta,\sigma}}(\Pic_{\GG}) = i_{\GG} - \#E(\GG).
    \end{equation}
\end{notation}

\begin{lemma}\label{lem:stratification_prod_Pic_satisfies_assumptions}
    Let $X$, $\sigma\in\Sigma$ and $\beta \in A_1(X)$ satisfying \Cref{assumption:toric}.
    Let $\ccF=\oplus_{\rho \in \Sigma(1)} R^1\pi_\ast (\ffL_\rho^\vee \otimes \omega_\pi)$ and $B=\Pic_{0,n,X,\beta,\sigma}^{st}$. 
    The collection of stratifications
    \[
        B_i = \bigsqcup_{\substack{\GG \in \Gamma_{0,n}^{st}(X,\beta)/_\simeq\\i_{\GG} = i}} \Pic_{\GG}
    \]
    for $i\in\{i_{\GG}\}_{\GG \in \Gamma^{st}_{0,n}(X,\beta)}$
    satisfies \Cref{assumption:stratification}.
\end{lemma}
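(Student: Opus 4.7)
The plan is to follow the same strategy as in Lemma \ref{lem:stratification_Pic_satisfies_assumptions}, adapted to a product of Picard stacks; the heart of the argument is that both the rank of $\ccF$ at a closed point and the generic rank of $\ccF$ over a stratum can be read off combinatorially from the decorated dual graph.

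First, I would check the three ``structural'' conditions of Assumption \ref{assumption:stratification} (in its stacky incarnation from Assumption \ref{assumption:stratification_stacks}). Finiteness of each stratification is immediate because $\Gamma^{st}_{0,n}(X,\beta)$ is finite, as noted after Definition \ref{def:decorated_marked_tree}. That the strata $\Pic_\GG$ are locally closed irreducible substacks of $B=\Pic^{st}_{0,n,X,\beta,\sigma}$ and cover it is precisely the content of Corollary \ref{cor:stratification_prod_Pic}, combined with the irreducibility in Lemma \ref{lem:irreducible_strata_prod_Pic}.

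The main step is then to verify that $\rk(\ccF)$ is constant on each $\Pic_\GG$ with value $\grk_B(\ccF)+i_\GG$, which is what guarantees that the proposed stratification of $B_i$ has the correct index $i$. For a point $P=(C,p_1,\ldots,p_n,(L_\rho)_{\rho\notin\sigma})\in\Pic_\GG$, Equation \eqref{eq:rank_is_sum_h0} (cohomology and base change together with Serre duality on each rational component) gives
\[
    \rk(\ccF\mid_P)=\sum_{\rho\in\Sigma(1)} h^0(C,L_\rho).
\]
Lemma \ref{lem:h1_constant_prod_Pic} ensures that each summand $h^0(C,L_\rho)$ depends only on the decorated dual graph and equals $h^0(\GG,L_\rho)$ in the notation of \ref{notation:h1_decorated_graph}. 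Combining this with the generic rank computation \eqref{eq:generic_rank_is_sum_h0_P1} and the definition of $i_\GG$ in \eqref{eq:definition_iG_prod} yields $\rk(\ccF\mid_P)=\grk_B(\ccF)+i_\GG$, which shows $\Pic_\GG\subseteq B_{i_\GG}$ and, ranging over $\GG$, gives the claimed stratification.

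There is no genuine obstacle: every required input — irreducibility and local-closedness of the strata, constancy of $h^0$ on each stratum, agreement of the generic rank of $\ccF$ on $B$ with its value on the smooth locus, and the translation into the combinatorial invariant $i_\GG$ — is already in place in Sections \ref{sec:background} and \ref{subsec:comps_ambient_cone}. The proof will therefore amount to citing Corollary \ref{cor:stratification_prod_Pic}, Lemma \ref{lem:h1_constant_prod_Pic} and Equations \eqref{eq:rank_is_sum_h0}--\eqref{eq:generic_rank_is_sum_h0_P1}, and assembling them as above.
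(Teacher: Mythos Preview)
Your proposal is correct and follows essentially the same approach as the paper: the paper's proof simply reads ``Same argument as in \Cref{lem:stratification_Pic_satisfies_assumptions}'', and your write-up is exactly that argument transported to the product-of-Picards setting, citing the analogous ingredients (\Cref{cor:stratification_prod_Pic}, \Cref{lem:h1_constant_prod_Pic}, and Equations \eqref{eq:rank_is_sum_h0}--\eqref{eq:generic_rank_is_sum_h0_P1}).
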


\begin{proof}
    Same argument as in \Cref{lem:stratification_Pic_satisfies_assumptions}.
\end{proof}

To conclude, we apply \Cref{corollary:decomposition_abelian_cone_stratification} to the stratifications we have just constructed. Before that, we introduce some handy notation.

\begin{notation}
    Let $\pi$ denote the natural projection $S_{0,n,X,\beta,\sigma} \to \ffP^{st}_{g,n,X,\beta,\sigma}$, we write
    \[
        S_{\GG} \coloneqq \pi^{-1}(\Pic_{\GG}).
    \]
    Following \Cref{not:closure}, we write $\overline{S_{\GG}}$ for $\cl_{S_{0,n,X,\beta,\sigma}}(S_{\GG})$.
\end{notation}

\begin{theorem}\label{thm:components_abelian_cone_S}
    Let $X$, $\sigma\in\Sigma$ and $\beta \in A_1(X)$ satisfying \Cref{assumption:toric}.
    The irreducible components of $S_{0,n,X,\beta,\sigma}$ are
    \begin{equation}\label{eq:components_abelian_cone_S}
        \left\{\overline{S_{\GG}}\colon 
        \GG\in \Gamma_{0,n}^{st}(X,\beta)/_\simeq, 
        d_{\GG}\geq d_{\GG'} \, \forall\, \GG' \in \Gamma_{0,n}^{st}(X,\beta) \text{ with } \Pic_{\GG}\subseteq \overline{\Pic_{\GG'}}\right\}
    \end{equation}
\end{theorem}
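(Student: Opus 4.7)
The proof will be a direct application of Theorem \ref{thm:decomposition_abelian_cone_stratification_stacks} (the stack version of the irreducible decomposition of an abelian cone) to the setup constructed in this section. Since all the heavy lifting has already been done, the plan is essentially bookkeeping: verify the hypotheses, match up the notation, and quote the earlier result.

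First, I would observe that the base $\ffB = \Pic^{st}_{0,n,X,\beta,\sigma}$ is a smooth Noetherian Artin stack (being an open substack of a product of Picard stacks over $\ffM_{0,n}$, each of which is smooth), and that the coherent sheaf $\ccF = \oplus_{\rho \in \Sigma(1)} R^1\pi_\ast(\ffL_\rho^\vee \otimes \omega_\pi)$ is coherent because $\pi$ is proper and flat. Thus the setup of Theorem \ref{thm:decomposition_abelian_cone_stratification_stacks} applies. The index set $\Delta$ from \Cref{assumption:stratification_stacks} corresponds, by \Cref{lem:stratification_prod_Pic_satisfies_assumptions}, to the set $\Gamma^{st}_{0,n}(X,\beta)/_\simeq$, with strata $\ffD_{i,j} = \Pic_{\GG}$ for $\GG$ with $i_{\GG} = i$.

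Next, I would match numerical invariants. Under \Cref{lem:stratification_prod_Pic_satisfies_assumptions}, the generic rank of $\ccF$ on the stratum $\Pic_\GG$ exceeds the generic rank of $\ccF$ on $\ffB$ exactly by $i_\GG$, so the index $i$ in \Cref{assumption:stratification_stacks} coincides with $i_\GG$. By \Cref{rmk:codimension_prod_pics_is_number_of_edges} we have $\codim_{\ffB}(\Pic_\GG) = \#E(\GG)$, so the number $\mathfrak{d}_{i,j}$ from \Cref{thm:decomposition_abelian_cone_stratification_stacks} equals
\[
    \mathfrak{d}_{i,j} = i - \codim(\ffD_{i,j}) = i_\GG - \#E(\GG) = d_\GG,
\]
matching \Cref{eq:definition_dG_prod} exactly.

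Then I would translate the combinatorial containment condition. Given $\GG, \GG' \in \Gamma^{st}_{0,n}(X,\beta)$, the inclusion $\Pic_{\GG} \subseteq \overline{\Pic_{\GG'}}$ of strata in $\ffB$ corresponds to the closure relation between $\ffD_{i,j}$ and $\ffD_{k,\ell}$ used in \Cref{thm:decomposition_abelian_cone_stratification_stacks}. Note that the preimage $\pi^{-1}(\Pic_\GG)$ in $S_{0,n,X,\beta,\sigma}$ is what we denote by $S_\GG$, and its closure in $S_{0,n,X,\beta,\sigma}$ is $\overline{S_\GG}$. Plugging these translations into \Cref{eq:components_abelian_cone_stratification_stacks} yields exactly the set in \Cref{eq:components_abelian_cone_S}, completing the proof.

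I do not expect any real obstacle: all the input lemmas have already been proved, and the only thing to watch is that the bookkeeping between the abstract index set $\Delta$ of \Cref{assumption:stratification_stacks} and the combinatorial index set $\Gamma^{st}_{0,n}(X,\beta)/_\simeq$ is done correctly, in particular that $d_\GG$ defined in \Cref{eq:definition_dG_prod} is literally the $\mathfrak{d}_{i,j}$ appearing in the general theorem. The only subtle point worth double-checking is that the map $\GG \mapsto (i_\GG,[\GG])$ gives a well-defined stratification with locally closed irreducible strata — but this is \Cref{cor:stratification_prod_Pic} combined with \Cref{lem:stratification_prod_Pic_satisfies_assumptions}, so no new argument is needed.
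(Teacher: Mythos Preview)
Your proposal is correct and follows essentially the same approach as the paper: apply the general decomposition theorem for abelian cones to the stratification of $\Pic^{st}_{0,n,X,\beta,\sigma}$ by decorated dual trees supplied by \Cref{lem:stratification_prod_Pic_satisfies_assumptions}. The only cosmetic difference is that the paper invokes \Cref{corollary:decomposition_abelian_cone_stratification} directly, whereas you (arguably more precisely, since the base is a stack) invoke its stack version \Cref{thm:decomposition_abelian_cone_stratification_stacks}; either way the bookkeeping identifying $\mathfrak{d}_{i,j}$ with $d_{\GG}$ is exactly as you describe.
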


\begin{proof}
    The result follows from applying \Cref{corollary:decomposition_abelian_cone_stratification} to the stratifications described in \Cref{lem:stratification_prod_Pic_satisfies_assumptions}.
\end{proof}

\begin{remark}\label{rmk:description_components_S_is_combinatorial}
    As in \Cref{rmk:description_components_S_is_combinatorial}, the conditions in \Cref{eq:components_abelian_cone_S} are combinatorial. More concretely, the following are equivalent for $\GG\in \Gamma_{0,n}^{st}(X,\beta)$:
    \begin{enumerate}
        \item $\overline{S_{\GG}}$ is an irreducible component of $S$,
        \item for every $\GG'\in \Gamma_{0,n}^{st}(X,\beta)$ obtained from $\GG$ by a sequence of edge contractions, it holds that
        \[
            d_{\GG} - d_{\GG'} = \sum_{\rho \in \Sigma(1)} h^0(\GG,L_\rho) - \#E(\GG) - (\sum_{\rho \in \Sigma(1)} h^0(\GG',L_\rho) - \#E(\GG')) \geq 0.
        \]
    \end{enumerate}
\end{remark}

\subsection{Components of genus-0 stable maps}\label{subsec:comps_maps}

Let $X$, $\sigma$ and $\beta$ as in \Cref{assumption:toric}. We can describe the irreducible components of $\overline{\ccM}_{0,n}(X,\beta)$ combining \Cref{thm:components_abelian_cone_S} and the open embedding in \Cref{eq:open_embedding_stable_maps}. Before that, we introduce some notations.

\begin{notation}\label{notation:MG_and_MG_bar}
    Let $\pi$ denote the composition 
    \[
        \overline{\ccM}_{0,n}(X,\beta)\hookrightarrow S_{0,n,X,\beta,\sigma} \to \Pic^{st}_{0,n,X,\beta,\sigma}.
    \]
    For $\GG\in \Gamma^{st}_{0,n}(X,\beta)$, we write
    \[
        \ccM_{\GG} \coloneqq \pi^{-1}(\Pic_{\GG}) = S_{\GG} \cap \overline{\ccM}_{0,n}(X,\beta).
    \] 
    We write
    \[
        \overline{\ccM}_{\GG} = \cl_{\overline{\ccM}_{0,n}(X,\beta)} (\ccM_{\GG}).
    \]
\end{notation}

\begin{theorem}\label{cor:irreducible_decomposition_maps}
    Let $X$, $\sigma\in\Sigma$ and $\beta \in A_1(X)$ satisfying \Cref{assumption:toric}.
    The irreducible components of $\overline{\ccM}_{0,n}(X,\beta)$ are
    \begin{equation}\label{eq:components_stable_maps}
        \left\{\overline{\ccM}_{\GG}\colon 
        \GG\in \Gamma_{0,n}^{st}(X,\beta)/_\simeq, \ccM_{\GG}\neq \emptyset,
        d_{\GG}\geq d_{\GG'} \, \forall\, \GG' \in \Gamma_{0,n}^{st}(X,\beta) \text{ with } \Pic_{\GG}\subseteq \overline{\Pic_{\GG'}}\right\}
    \end{equation}
\end{theorem}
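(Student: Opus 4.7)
The plan is to deduce this statement from \Cref{thm:components_abelian_cone_S} by intersecting irreducible components of the ambient abelian cone $S=S_{0,n,X,\beta,\sigma}$ with the open substack $\overline{\ccM}_{0,n}(X,\beta)\hookrightarrow S$ given by \Cref{eq:open_embedding_stable_maps}. The general topological principle I will use is elementary: if $U$ is an open subspace of a Noetherian (Artin) space $Y$, then the irreducible components of $U$ are exactly the nonempty intersections $Z\cap U$ as $Z$ varies over the irreducible components of $Y$. Indeed, each nonempty $Z\cap U$ is open and dense in the irreducible set $Z$, hence irreducible; and if $Z_1\cap U\subseteq Z_2\cap U$ were a proper containment among components, density in $Z_1$ would force $Z_1\subseteq Z_2$, contradicting maximality.

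Applying this to $U=\overline{\ccM}_{0,n}(X,\beta)$ inside $Y=S$, and invoking \Cref{thm:components_abelian_cone_S}, the irreducible components of $\overline{\ccM}_{0,n}(X,\beta)$ are the nonempty intersections $\overline{S_\GG}\cap\overline{\ccM}_{0,n}(X,\beta)$ for those $\GG\in\Gamma_{0,n}^{st}(X,\beta)/_\simeq$ satisfying the numerical maximality condition $d_\GG\geq d_{\GG'}$ for all $\GG'$ with $\Pic_\GG\subseteq\overline{\Pic_{\GG'}}$.

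It remains to identify $\overline{S_\GG}\cap\overline{\ccM}_{0,n}(X,\beta)$ with $\overline{\ccM}_\GG$ and to translate the nonemptiness condition. For the identification, recall that $\ccM_\GG=S_\GG\cap\overline{\ccM}_{0,n}(X,\beta)$ by \Cref{notation:MG_and_MG_bar}; since $\overline{\ccM}_{0,n}(X,\beta)$ is open in $S$, forming closures inside $\overline{\ccM}_{0,n}(X,\beta)$ is compatible with forming closures in $S$ and intersecting back, giving
\[
\overline{\ccM}_\GG=\cl_{\overline{\ccM}_{0,n}(X,\beta)}(\ccM_\GG)=\overline{S_\GG}\cap\overline{\ccM}_{0,n}(X,\beta).
\]
For the nonemptiness, if $\ccM_\GG=\emptyset$ then $S_\GG$ is contained in the closed complement $S\setminus\overline{\ccM}_{0,n}(X,\beta)$, hence so is its closure $\overline{S_\GG}$, and therefore $\overline{S_\GG}\cap\overline{\ccM}_{0,n}(X,\beta)=\emptyset$; conversely, $\ccM_\GG\subseteq\overline{S_\GG}\cap\overline{\ccM}_{0,n}(X,\beta)$ shows that $\ccM_\GG\neq\emptyset$ suffices for the intersection to be nonempty.

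Putting these together yields \Cref{eq:components_stable_maps}. The main obstacle—such as it is—is conceptual rather than technical: unlike on the ambient cone $S$, where every numerically maximal stratum contributes a component, for $\overline{\ccM}_{0,n}(X,\beta)$ we must separately detect when the non-degeneracy condition cutting out stable maps inside $S$ admits a section over $\Pic_\GG$, i.e.\ when $\ccM_\GG$ is actually nonempty. This is precisely the extra hypothesis in the statement and has no analogue in \Cref{thm:components_abelian_cone_S}. Once this is recorded, the corollary follows.
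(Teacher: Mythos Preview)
Your proof is correct and follows the same approach as the paper: use the open embedding \eqref{eq:open_embedding_stable_maps} together with \Cref{thm:components_abelian_cone_S}, and invoke the standard fact that the irreducible components of an open $U\subseteq Y$ are precisely the nonempty $Z\cap U$ as $Z$ ranges over the irreducible components of $Y$. The paper's proof is a two-sentence version of exactly this; you have simply unpacked the topological principle and the identification $\overline{\ccM}_{\GG}=\overline{S_{\GG}}\cap\overline{\ccM}_{0,n}(X,\beta)$ more explicitly (the latter using that $\ccM_{\GG}$, when nonempty, is open and hence dense in the irreducible $S_{\GG}$).
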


\begin{proof}
    By the open embedding in \Cref{eq:open_embedding_stable_maps}, the irreducible components of $\overline{\ccM}_{0,n}(X,\beta)$ are exactly the irreducible components $S_{\GG}$ of $S_{0,n,X,\beta}$ such that
    \[
        \ccM_{\GG} = S_{\GG} \cap \overline{\ccM}_{0,n}(X,\beta) \neq \emptyset.
    \]
    The result follows from \Cref{thm:components_abelian_cone_S}.
\end{proof}

\begin{remark}
    As in \Cref{rmk:description_components_S_is_combinatorial}, the condition ``$d_{\GG}\geq d_{\GG'}$ for all $\GG\in\Gamma_{0,n}^{st}(X,\beta)$ with $\Pic_{\GG}\subseteq \overline{\Pic_{\GG'}}$'' is purely combinatorial. On the other hand, the condition $\overline{\ccM}_{\GG}\neq \emptyset$ is more subtle, see \Cref{rmk:condition_MG_nonempty}
\end{remark}

\begin{remark}\label{rmk:condition_MG_nonempty}\label{rmk:only_irreducible_classes_intersect_maps}
    The condition $\ccM_{\GG}\neq \emptyset$ in \Cref{cor:irreducible_decomposition_maps} is necessary, see \Cref{ex:3l_no_maps}. 
    Given $\GG\in\Gamma_{0,n}^{st}(X,\beta)$, there are at least two simple necessary conditions for $\ccM_{\GG}$ to be non-empty. 

    Firstly, note that if $(C,f)\in\overline{\ccM}_{0,n}(X,\beta)$ and $C'\subseteq C$ is an irreducible component, then $f(C')$ is either a point or an irreducible curve in $X$. Thus if $\GG \in \Gamma_{0,n}^{st}(X,\beta)$ and $\ccM_{\GG}\neq \emptyset$, it follows that $\GG \in \Gamma_{0,n}^{irred}(X,\beta)$. This condition does not hold in \Cref{ex:3l_no_maps}.

    Secondly, if $(C,f)\in\overline{\ccM}_{0,n}(X,\beta)$ and $C_1, C_2\subseteq C$ are two irreducible components with $C_1\cap C_2 = p$, then $f(p)\in f(C_1)\cap f(C_2)$. Thus if $\GG \in \Gamma_{0,n}^{irred}(X,\beta)$ and $\ccM_{\GG}\neq \emptyset$, it follows that for each edge $(v_1,v_2) \in E(\GG)$, there must exist irreducible curves $C_1$ and $C_2$ on $X$ with $[C_1] = c(v_1)$ and $[C_2] = c(v_2)$ in $A_1(X)$ and $C_1\cap C_2\neq \emptyset$. This condition does not hold in \Cref{ex:3l_adjacent_vertices_do_not_meet}.

    This last condition can be generalized as follows. If $\GG \in \Gamma^{irred}_{0,n}(X,\beta)$ with $\ccM_{\GG}\neq \emptyset$ and there is a path $v_1,\ldots, v_n$ in $\GG$ such that $c(v_2) = \ldots = c(v_{n-1})=0$ then there must exist curves $C_1, C_n$ on $X$ such that $[C_1] = c(v_1)$, $[C_n] = c(v_n)$ and $C_1\cap C_n\neq 0$. This condition does not hold for $\GG_{14}$ in \Cref{tab:graphs_2l}, see \Cref{ex:empty_maps}.
\end{remark}

\begin{remark}\label{rmk:G0_is_main_component}
    In the following, we denote by $\GG_0\in\Gamma^{st}_{0,n}(X,\beta)$ the decorated marked tree with a single vertex. If $\ccM_{\GG_0} \neq \emptyset$, then $\overline{\ccM}_{\GG_0}$ is an irreducible component of $\overline{\ccM}_{0,n}(X,\beta)$, known as the main irreducible component, by \Cref{cor:irreducible_decomposition_maps}. This fact follows from the argument used in \Cref{rmk:main_component_is_component}.
\end{remark}

\subsection{Components of genus-0 stable quasimaps}\label{sec:components_qmaps}

As in \Cref{subsec:comps_maps}, components of $\overline{\ccQ}_{0,n}(X,\beta)$ can be determined in a completely analogous way, thanks to \Cref{rmk:quasimap_stability}. Note that, since we change the stability in the Picard stack from \Cref{eq:stability_Pic} to \Cref{eq:stability_stable_qmaps}, we should also change the stability notion on decorated marked trees. Concretely,  $\GG=(G,m,c)\in \Gamma_{0,n}(X,\beta)$ is stable for quasimaps if and only if it satisfies both the stability in \Cref{eq:stability_decorated_trees} and also that 
\begin{equation}\label{eq:quasimap_stability_extra_condition}
    \#m^{-1}(v) + \mathrm{val}(v) \geq 2 \ \text{ for all } v \in V(G).
\end{equation}
We denote by $\Gamma^{st,\ccQ}_{0,n}(X,\beta)$ the set of all stable (in the sense of quasimaps) $\beta$-decorated $n$-marked trees.
In particular, when we work with quasimaps we assume that $n\geq 2$. Otherwise, the space is empty.

Finally, following \Cref{notation:MG_and_MG_bar}, we let $\pi$ denote the projection $\ccQ_{0,n}(X,\beta) \to \Pic_{0,n,X,\beta,\sigma}$ and, for $\GG\in \Gamma^{st,\ccQ}_{0,n}(X,\beta)$, we write
    \[
        \ccQ_{\GG} \coloneqq \pi^{-1}(\Pic_{\GG}) = S^{\ccQ}_{\GG} \cap \overline{\ccM}_{0,n}(X,\beta)
    \] 
and
    \[
        \overline{\ccQ}_{\GG} = \cl_{\ccQ_{0,n}(X,\beta)} (\ccQ_{\GG}).
    \]

\begin{theorem}\label{cor:irreducible_decomposition_qmaps}
    Let $X$, $\sigma\in\Sigma$ and $\beta \in A_1(X)$ satisfying \Cref{assumption:toric} and $n\geq 2$.
    The irreducible components of $\ccQ_{0,n}(X,\beta)$ are
    \begin{equation}\label{eq:components_stable_qmaps}
        \left\{\overline{\ccQ}_{\GG}\colon 
        \GG\in \Gamma_{0,n}^{st,\ccQ}(X,\beta)/_\simeq, \ccQ_{\GG}\neq \emptyset,
        d_{\GG}\geq d_{\GG'} \, \forall\, \GG' \in \Gamma_{0,n}^{st,\ccQ}(X,\beta) \text{ with } \Pic_{\GG}\subseteq \overline{\Pic_{\GG'}}\right\}
    \end{equation}
\end{theorem}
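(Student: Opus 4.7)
The plan is to mirror the proof of Theorem \ref{cor:irreducible_decomposition_maps}, replacing the ambient abelian cone $S_{0,n,X,\beta,\sigma}$ by its quasimap analogue $S^{\ccQ}_{0,n,X,\beta,\sigma}$ defined in \eqref{eq:definition_ambient_cone_qmaps} and replacing the stability on decorated marked trees accordingly. Since $\Pic^{st,\ccQ}_{0,n,X,\beta,\sigma}$ is an open substack of $\Pic_{0,n,X,\beta,\sigma}$, the stratification by decorated dual graphs from \Cref{cor:stratification_prod_Pic} restricts to a stratification
\[
    \Pic^{st,\ccQ}_{0,n,X,\beta,\sigma} = \bigsqcup_{\GG \in \Gamma^{st,\ccQ}_{0,n}(X,\beta)/_\simeq} \Pic_{\GG}
\]
by locally closed irreducible substacks, where the indexing set $\Gamma^{st,\ccQ}_{0,n}(X,\beta)$ accounts for both the stability condition \eqref{eq:stability_decorated_trees} and the extra condition \eqref{eq:quasimap_stability_extra_condition} coming from \eqref{eq:stability_stable_qmaps}. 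This relies on observing, as in \Cref{constr:decorated_dual_graph}, that the quasimap stability of $(C,p_1,\ldots,p_n,(L_\rho)_{\rho\notin\sigma})$ is detected by its decorated dual graph.

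The first main step is to prove the quasimap analogue of \Cref{thm:components_abelian_cone_S}: the irreducible components of $S^{\ccQ}_{0,n,X,\beta,\sigma}$ are the closures $\overline{S^{\ccQ}_{\GG}}$ of $\pi^{-1}(\Pic_{\GG})$ indexed by those $\GG \in \Gamma^{st,\ccQ}_{0,n}(X,\beta)$ satisfying $d_{\GG}\geq d_{\GG'}$ for all $\GG' \in \Gamma^{st,\ccQ}_{0,n}(X,\beta)$ with $\Pic_{\GG}\subseteq \overline{\Pic_{\GG'}}$. This follows by applying \Cref{thm:decomposition_abelian_cone_stratification_stacks} to the base $\ffB = \Pic^{st,\ccQ}_{0,n,X,\beta,\sigma}$ and the coherent sheaf $\ffF = \oplus_{\rho \in \Sigma(1)} R^1\pi_\ast(\ffL_\rho^\vee \otimes \omega_\pi)$, after verifying that the stratification above satisfies \Cref{assumption:stratification_stacks}. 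The verification proceeds exactly as in \Cref{lem:stratification_prod_Pic_satisfies_assumptions}: the generic rank and pointwise rank of $\ffF$ are given by \eqref{eq:generic_rank_is_sum_h0_P1} and \eqref{eq:rank_is_sum_h0}, so constancy of the rank on each stratum $\Pic_{\GG}$ is \Cref{lem:h1_constant_prod_Pic}, and the definition of $i_{\GG}$ and $d_{\GG}$ in \Cref{notation:def_iG_dG} is unchanged.

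The second main step is to intersect with $\ccQ_{0,n}(X,\beta)$ using the open embedding \eqref{eq:open_embedding_stable_qmaps}. Since $\ccQ_{0,n}(X,\beta)\hookrightarrow S^{\ccQ}_{0,n,X,\beta,\sigma}$ is open, an irreducible component of $S^{\ccQ}_{0,n,X,\beta,\sigma}$ restricts to an irreducible component of $\ccQ_{0,n}(X,\beta)$ precisely when it meets $\ccQ_{0,n}(X,\beta)$ non-trivially, i.e.\ when $\ccQ_{\GG}=S^{\ccQ}_{\GG}\cap \ccQ_{0,n}(X,\beta) \neq \emptyset$, and every irreducible component of $\ccQ_{0,n}(X,\beta)$ arises this way. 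Combining these two steps yields \eqref{eq:components_stable_qmaps}.

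No step here is genuinely hard since each is a direct transcription of the argument for stable maps. The only point requiring care is the replacement of $\Gamma^{st}_{0,n}(X,\beta)$ by $\Gamma^{st,\ccQ}_{0,n}(X,\beta)$; one must check that the additional valency condition \eqref{eq:quasimap_stability_extra_condition} on decorated trees is exactly the one cut out on dual graphs by the stronger quasimap stability \eqref{eq:stability_stable_qmaps}, which is a standard unwinding of the ampleness condition for all $\epsilon\in\ccQ_{>0}$ on a rational nodal curve.
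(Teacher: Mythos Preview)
Your proposal is correct and follows exactly the approach indicated in the paper, which simply says the proof is analogous to \Cref{cor:irreducible_decomposition_maps}. You have spelled out the details the paper leaves implicit—restricting the stratification to $\Pic^{st,\ccQ}_{0,n,X,\beta,\sigma}$, applying \Cref{thm:decomposition_abelian_cone_stratification_stacks}, and then using the open embedding \eqref{eq:open_embedding_stable_qmaps}—and there is nothing to add.
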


\begin{proof}
    Analogous to \Cref{cor:irreducible_decomposition_maps}. 
\end{proof}

\section{Components of stable maps to $\Bl_{pt}\bbP^2$ in low degree}\label{sec:stable_maps}

After reviewing some background on the geometry of $\Bl_{pt}\bbP^2$, we apply \Cref{cor:irreducible_decomposition_maps} in two examples. 
In \Cref{subsubsection:2L_blowup}, we compute the irreducible components of $\overline{\ccM}_{0,0}(\Bl_{pt}\bbP^2,2\ell)$ and describe its main component. Finally, in \Cref{subsection:3L_blowup} we describe the irreducible components of $\overline{\ccM}_{0,0}(\Bl_{pt}\bbP^2,3\ell)$.

\subsection{Background: the geometry of $\Bl_{pt}\bbP^2$}\label{subsec:geometry_Bl}

We consider the toric variety $X=\Bl_\pt \bbP^2$ with the fan in \Cref{fig:fan_BlP2}. The ray generators are
\begin{align*}
    u_0=(-1,-1),\    u_1=(1,0),\     u_2= (0,1),\     u_3= (1,1).
\end{align*}

\begin{figure}
    \centering
			\begin{tikzpicture}[scale = .8]
				\fill[fill=gray!40]  (0,0) -- (2,0) -- (2,2);
                \fill[fill=gray!40]  (0,0) -- (2,2) -- (0,2);
				\fill[fill=gray!40]  (0,0) -- (0,2) -- (-2,2) -- (-2,-2);
				\fill[fill=gray!40]  (0,0) -- (2,0) -- (2,-2) -- (-2,-2);
				\draw[thick] (0,0)--(0,2);
				\draw[thick] (0,0)--(2,0);
				\draw[thick] (0,0)--(-2,-2);
                \draw[thick] (0,0)--(2,2);
    		\draw (2.5,0) node[align=center] {$\rho_{1}$};
    		\draw (0,2.5) node[align=center] {$\rho_{2}$};
    		\draw (-2.5,-2.5) node[align=center] {$\rho_{0}$};
            \draw (2.5,2.5) node[align=center] {$\rho_{3}$};
            \draw (1.5,0.75) node[align=center] {$\sigma_{1,3}$};
            \draw (0.75,1.5) node[align=center] {$\sigma_{2,3}$};
    		\draw (-1,1) node[align=center] {$\sigma_{0,2}$};
    		\draw (1,-1) node[align=center] {$\sigma_{0,1}$};    
    		\end{tikzpicture}
    		\caption{The fan of $\Bl_{\pt}\bbP^{2}$.}
        \label{fig:fan_BlP2}
\end{figure}
 
We let $\ell\in A_1(X)$ denote the total transform of the class of a line in $\bbP^2$ and $e\in A_1(X)$ denote the class of the exceptional divisor. We let $s \coloneqq \ell-e$, which is the class of the strict transform of a line. The intersection numbers of the classes $s, e, \ell$ are collected in \Cref{table:multiplication}.
\begin{table}
    \begin{tabular}{c|c|c|c}
      &s &e&$\ell$  \\
     \hline s    & 0 &1&1  \\
     \hline e   & 1&-1&0 \\
     \hline $\ell$ &1&0&1
    \end{tabular}
    \caption{Multiplication table in $A_1(X)$}
    \label{table:multiplication}
\end{table}
In particular, 
\[
    A_1(X) = \bbZ s \oplus \bbZ e.
\]

The Mori cone of $X$ is
\[
    \mori(X) = \bbR_{\geq 0}\, s + \bbR_{\geq 0}\, e.
\]
Motivated by \Cref{rmk:only_irreducible_classes_intersect_maps}, we define 
\begin{align*}
    A_1^{irred}(X) & \coloneqq \{\beta \in A_1(X) \colon \beta = 0 \text{ or } \beta \text{ can be represented by an irreducible curve}\} \\
    & = \bbZ_{\geq 0}\, e \bigcup (\bbZ_{\geq 0}\, s + \bbZ_{\geq 0}\, \ell),
\end{align*}
see \Cref{fig:Mori_cone}. In particular, the main component of $\overline{\ccM}_{0,0}(\Bl_\pt \bbP^2,\beta)$ is non-empty if and only if $\beta \in A_1^{irred}(X)$.

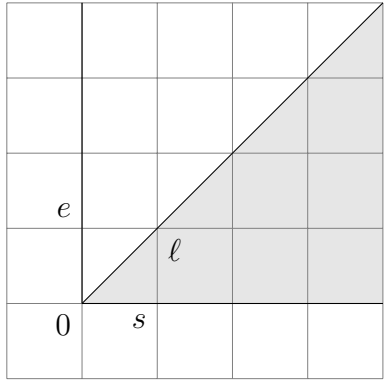
\begin{figure}
    \centering
\begin{tikzpicture}
\fill[fill=gray!20] (0,0)--(4,0)--(4,4);
\draw[step=1cm,gray,very thin] (-1,-1) grid (4,4);
\draw (0,0) --(4,0);
\draw (0,0)--(0,4);
\draw   (0,0) -- (4,4);
\draw  (1,0) node[anchor=north east] {$s$};
\draw  (0,1) node[anchor=south east] {$e$};
\draw  (1,1) node[anchor=north west] {$\ell$};
\draw  (0,0) node[anchor=north east] {$0$};
\end{tikzpicture}
    \caption{The group $A_1(X) \simeq \bbZ s\oplus \bbZ e$. The set $A_1^{irred}(X)$ is the union of the vertical line $\bbZ_{\geq 0}\, e$ and the gray cone $\bbZ_{\geq 0}\, s + \bbZ_{\geq 0}\, \ell$.}
    \label{fig:Mori_cone}
\end{figure}

By \Cref{subsec:functor_toric_variety} and \Cref{rmk:sigma_collection_equivalent_without_cm}, with the choice $\sigma=\sigma_{0,2}$, giving a morphism $f\colon S\to X$ is equivalent to giving two line bundles $L_s=f^\ast \ccO_X(D_1)$ and $L_e = f^\ast \ccO_X(D_3)$ and four sections 
\[
    s_0\in H^0(C, L_s\otimes L_e),\ s_1, s_2\in H^0(C, L_s),\ s_3\in H^0(C, L_e).
\]
satisfying the non-degeneracy condition: the sections in each of the following two sets $\{s_0,s_3\}$, $\{s_1,s_2\}$ cannot vanish simultaneously.

\subsection{Reduction to the line bundle $\ffL_e$}\label{subsec:reduction_Le}

To study the moduli space $\overline{\ccM}_{0,n}(\Bl_{pt}\bbP^2,\beta)$, we use the open embedding in \Cref{eq:open_embedding_stable_maps}. Concretely, 
we let $\sigma = \sigma_{0,2}$
and we let $\ffP^{st} = \Pic^{st}_{0,n,X,\beta,\sigma_{0,2}}$, in the notation of \ref{notation:product_Pics_stable}. We have an open embedding
\begin{equation}\label{eq:embedding_maps_blowup}
    \overline{\ccM}_{0,n}(\Bl_{pt}\bbP^2,\beta)
    \hookrightarrow 
    \Spec_{\ffP^{st}} \Symm\left( 
    (R^1\pi_\ast ((\ffL_s\otimes \ffL_e)^\vee \otimes \omega_\pi)) \oplus 
    (R^1\pi_\ast (\ffL_s^\vee \otimes \omega_\pi))^{\oplus 2} \oplus 
    (R^1\pi_\ast (\ffL_e^\vee \otimes \omega_\pi))
    \right)
\end{equation}
as in \Cref{eq:open_embedding_stable_maps}.

The first three summands in the right hand side of \Cref{eq:embedding_maps_blowup} are vector bundles. Indeed, we saw in \Cref{eq:rank_is_h0} that the rank of $R^1\pi_\ast(\ffL_s^\vee \otimes \omega_\pi)$ at $(C,p_1,\ldots, p_n,L_e,L_s)\in \ffP^{st}$ is $h^0(C,L_s)$. Since $s$ is nef, we have that $h^1(C,L_s)=0$ by \Cref{lem:dim_cohomology}, thus $\rk(R^1\pi_\ast (\ffL_s^\vee\otimes \omega_\pi)$ is constant on $\ffP^{st}$, so it is a vector bundle. The same argument applies if we replace $\ffL_s$ by $\ffL_s \otimes \ffL_e$, as $s+e=\ell$ is nef too.
Therefore, the extra irreducible components of the ambient abelian cone in \Cref{eq:embedding_maps_blowup} are controlled by $\ffL_e$.

\subsection{Example: $\overline{\ccM}_{0,0}(\Bl_\pt \bbP^2,2\ell)$} \label{subsubsection:2L_blowup}

We describe all the irreducible components of $\overline{\ccM}_{0,0}(\Bl_\pt \bbP^2,2\ell)$ using \Cref{cor:irreducible_decomposition_maps}. 

\begin{theorem}\label{thm:irreducible_decomposition_2l}
    The irreducible decomposition of $\overline{\ccM}_{0,0}(\Bl_\pt \bbP^2,2\ell)$ is 
    \[
        \overline{\ccM}_{0,0}(\Bl_\pt \bbP^2,2\ell) = \overline{\ccM}_{\GG_0} \cup \overline{\ccM}_{\GG_1}.
    \]
    with $\GG_0, \GG_1 \in \Gamma_{0,0}^{st}(\Bl_{pt}\bbP^2, 2\ell)$ described in \Cref{tab:graphs_2l}. Furthermore, 
    \[
        \dim (\overline{\ccM}_{\GG_0}) = \dim (\overline{\ccM}_{\GG_1}) = 7.
    \]
\end{theorem}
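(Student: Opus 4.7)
The plan is to apply \Cref{cor:irreducible_decomposition_maps} with $X = \Bl_\pt \bbP^2$ and $\beta = 2\ell$. By \Cref{subsec:reduction_Le}, the line bundles $\ffL_0$, $\ffL_1$, $\ffL_2$ have strictly positive degree on every component of every prestable $C$ appearing in $\ffP^{st}$, so $h^1$ of each vanishes by \Cref{lem:dim_cohomology}, and hence their $h^0$ is constant across strata. Consequently only $h^0(\GG, L_e)$ can jump, and
\[
i_\GG \;=\; h^1(\GG, L_e), \qquad d_\GG \;=\; h^1(\GG, L_e) - \#E(\GG).
\]
Since $d_{\GG_0} = 0$ for the single-vertex tree of class $2\ell$, any $\GG$ giving a component must satisfy $d_\GG \geq 0$.

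Next I will enumerate the finitely many isomorphism classes $\GG \in \Gamma_{0,0}^{st}(X,2\ell)$ with $\ccM_\GG \neq \emptyset$. Stability forces every zero-class vertex to have valency at least three (as $n=0$), which sharply restricts the trees. By \Cref{rmk:condition_MG_nonempty}, each non-zero vertex class must be of the form $k\alpha$ with $\alpha$ an irreducible class on $X$; by the genus formula (cf.\ \Cref{subsec:geometry_Bl}), the irreducible classes bounded by $2\ell$ are $\{0, e, s, \ell, 2s+e, 2\ell\}$, so the multi-cover classes I must allow are $\{0, e, 2e, s, 2s, \ell, 2s+e, 2\ell\}$. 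Adjacency further restricts: for example, any tree with adjacent vertices of classes $\ell$ and $e$ (or connected through a chain of $0$-vertices) has $\ccM_\GG = \emptyset$ because lines not through $\pt$ are disjoint from $E$.

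For each remaining $\GG$ I compute $h^1(\GG, L_e)$ from the normalisation sequence
\[
0 \to H^0(C, L_e) \to \bigoplus_{v} H^0(C_v, L_e|_{C_v}) \to \bigoplus_{e\in E(\GG)} \bbC \to H^1(C, L_e) \to \bigoplus_v H^1(C_v, L_e|_{C_v}) \to 0.
\]
The degrees $d_v = c(v)\cdot e$ take values in $\{-2,-1,0,1,2\}$, and among our classes only $2e$ (giving $d_v = -2$) yields $h^1(C_v, L_e|_{C_v}) = 1$ by \Cref{lem:dim_cohomology}; all others contribute $0$. Running through the finite list, only two graphs pass $d_\GG \geq 0$: the trivial $\GG_0$ (one vertex of class $2\ell$, so $h^1 = 0$ and $\#E = 0$) and the graph $\GG_1$ with two vertices of classes $2s$ and $2e$ joined by one edge (where $h^1(\GG_1, L_e) = 1$, contributed by the $2e$-component, and $\#E(\GG_1) = 1$, giving $d_{\GG_1} = 0$). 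Non-emptiness of $\ccM_{\GG_1}$ is witnessed by any stable map whose two $\bbP^1$ components are degree-$2$ covers of a line through $\pt$ and of $E$ respectively, glued at the intersection point. Since $d_{\GG_0} = d_{\GG_1}$, the maximality condition is automatically satisfied under the unique contraction $\GG_1 \to \GG_0$; every other candidate either has empty $\ccM_\GG$, or has $d_\GG < 0$, or contracts to $\GG_1$ (giving a strictly larger $d$), and I will exhibit each such case.

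The equality $\dim \overline{\ccM}_{\GG_0} = \dim \overline{\ccM}_{\GG_1}$ then follows from \Cref{eq:dimension_preimage_Bij} together with $d_{\GG_0} = d_{\GG_1} = 0$, and the explicit value $7$ is obtained by computing $\dim \Pic_{\GG_0} + \mathrm{rk}\,\ccF|_{\Pic_{\GG_0}}$ via the Cox parametrisation of maps $\bbP^1 \to X$ of class $2\ell$. The principal obstacle lies in the completeness of the combinatorial case analysis in the middle two paragraphs, particularly in remembering to include the non-irreducible multi-cover classes $2s$ and $2e$, which are precisely the ones that make $\GG_1$ appear despite failing the genus-formula test for irreducibility.
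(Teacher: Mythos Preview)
Your approach is essentially the same as the paper's: apply \Cref{cor:irreducible_decomposition_maps}, reduce to studying $h^1(\GG,L_e)$ via \Cref{subsec:reduction_Le}, enumerate the relevant stable decorated trees, and compute $d_\GG$ for each. The paper, however, organizes the case analysis more efficiently by first proving a dedicated lemma (\Cref{lem:not_component}): if every vertex class satisfies $c(v)\cdot e \geq -1$ and $\#E(\GG)>0$, then automatically $d_\GG < d_{\GG_0}$. This immediately discards all partitions of $2\ell$ except $(2s,2e)$ and $(s,s,2e)$, leaving only the handful of graphs $\GG_1,\dots,\GG_4$ in \Cref{tab:graphs_2l} to check by hand. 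Your brute-force enumeration would of course reach the same conclusion, but the lemma is a reusable shortcut that scales better (the paper reuses it verbatim for the $3\ell$ case).

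Two small corrections. First, the line bundles $\ffL_0,\ffL_1,\ffL_2$ do \emph{not} have strictly positive degree on every component; they have non-negative degree on components of classes lying in $A_1^{irred}(X)$ because $\ell$ and $s$ are nef, and that is what forces $h^1=0$ on the relevant strata. Second, for the dimension the paper does not compute $\dim\Pic_{\GG_0}+\rk\ccF$ directly; instead it observes that $\overline{\ccM}_{\GG_0}$ shares a non-empty open with the unobstructed space $\overline{\ccM}_{0,0}(\bbP^2,2)$ and reads off the virtual dimension of the latter. Your Cox-parametrisation count is a legitimate alternative, but you should actually carry it out rather than assert the answer.
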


Before proving the theorem, we do some preparations. The first step would be to describe the set $\Gamma^{irred}_{0,0}(\Bl_{pt}\bbP^2,\beta)/_\simeq$ for $\beta = 2\ell$. The cardinality of this set grows quickly with the size of $\beta$. For example, it already contains 24 elements when $\beta = 2\ell$, see \Cref{tab:graphs_2l}. Thus, it is a good idea to be able to select a smaller subset that already contains all possible irreducible components of $\overline{\ccM}_{0,0}(\Bl_\pt \bbP^2,2\ell)$.

To do so, we note that each $\GG\in \Gamma^{irred}_{0,0}(\Bl_{pt}\bbP^2,\beta)$ induces a partition
\[
    \beta = \sum_{\substack{v \in V(G)\\ c(v)\neq 0}} c(v).
\]
The list of all partitions of $\beta$ is in general much smaller than $\Gamma^{irred}_{0,0}(\Bl_{pt}\bbP^2,\beta)/_\simeq$. The following lemma allows us to disregard some partitions directly. Note that the assumption $\beta \cdot e \geq -1$ in \Cref{lem:not_component} holds for all $\beta \in A_1^{irred}(\Bl_{pt}\bbP^2)$ except $\beta = d \ell$ with $d\geq 2$.
Recall that we denote by $\GG_0\in \Gamma^{irred}_{0,n}(\Bl_{pt}\bbP^2,\beta)$ the unique decorated tree with no edges and a single vertex. It corresponds to the main component of $\overline{\ccM}_{0,n}(\Bl_{pt}\bbP^2,\beta)$.

\begin{lemma}\label{lem:not_component}
    Let $\beta \in A_1(\Bl_{pt}\bbP^2)$ be an effective curve class with $\beta \cdot e \geq -1$. Let $\GG \in \Gamma^{irred}_{0,n}(\Bl_{pt}\bbP^2,\beta)$ such that $\#E(\GG)> 0$ and $c(v)\cdot e \geq -1$ for all $v\in V(\GG)$. Then $d_{\GG} < d_{\GG_0}$. In particular, $\overline{\ccM}_{\GG}$ is not an irreducible component of $\overline{\ccM}_{0,n}(\Bl_{pt}\bbP^2,\beta)$.  
\end{lemma}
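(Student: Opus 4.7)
The plan is to prove $d_{\GG} < 0 = d_{\GG_0}$; since $\Pic_{\GG_0}$ is the dense open smooth-curve stratum in $\ffP^{st}$, we have $\Pic_\GG \subseteq \overline{\Pic_{\GG_0}}$ for every $\GG$, so \Cref{cor:irreducible_decomposition_maps} will then immediately rule out $\overline{\ccM}_{\GG}$ as an irreducible component. Note that $d_{\GG_0} = 0$ because $\GG_0$ has no edges and $i_{\GG_0} = 0$. Hence the entire argument reduces to establishing $i_{\GG} < \#E(\GG)$.

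The first step is to simplify $i_\GG$ using Riemann--Roch. On any genus-$0$ (possibly nodal) curve $C$ and line bundle $L$ one has $h^0(C,L) - h^1(C,L) = \deg L + 1$, and the same identity holds on $\bbP^1$; subtracting gives
\[
    h^0(\GG, L_\rho) - h^0(\bbP^1, \ccO(\beta \cdot D_\rho)) = h^1(\GG, L_\rho) - h^1(\bbP^1, \ccO(\beta \cdot D_\rho)).
\]
Taking $\sigma = \sigma_{0,2}$ as in \Cref{subsec:reduction_Le}, the classes of $D_{\rho_0}, D_{\rho_1}, D_{\rho_2}$ are $\ell, s, s$ (all nef) and $\beta \cdot D_{\rho_3} = \beta\cdot e \geq -1$ by hypothesis, so every $\beta \cdot D_\rho \geq -1$ and \Cref{lem:dim_cohomology} gives $h^1(\bbP^1, \ccO(\beta \cdot D_\rho)) = 0$. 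For $\rho \in \{\rho_0,\rho_1,\rho_2\}$, the same nef-ness together with $c(v) \in A_1^{irred}(\Bl_{pt}\bbP^2)$ force $\deg L_\rho|_{C_v} \geq 0$ on every component, and a standard induction on $\#V(\GG)$ (peel off a leaf $v_0$ and apply Mayer--Vietoris, using that evaluation $H^0(C_{v_0}, L_\rho|_{C_{v_0}})\twoheadrightarrow L_\rho|_{p_{e_0}}$ is surjective) yields $h^1(C, L_\rho) = 0$. Hence $i_\GG = h^1(C, L_e)$.

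The second step is to bound $h^1(C, L_e)$ by at most $\#E(\GG) - 1$. The normalization exact sequence
\[
    0 \to L_e \to \pi_\ast \pi^\ast L_e \to \bigoplus_{\varepsilon \in E(\GG)} (L_e)_{p_\varepsilon} \to 0,
\]
combined with $h^1(C_v, L_e|_{C_v}) = 0$ (using $c(v) \cdot e \geq -1$), produces a long exact sequence that collapses to $h^1(C, L_e) = \#E(\GG) - \mathrm{rank}(\alpha)$, where $\alpha\colon \bigoplus_v H^0(C_v, L_e|_{C_v}) \to \bigoplus_\varepsilon (L_e)_{p_\varepsilon}$ is the gluing map. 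It thus suffices to exhibit a vertex $v \in V(\GG)$ with $c(v)\cdot e \geq 0$, for then $L_e|_{C_v}$ is globally generated; every vertex of a tree with $\#E(\GG) \geq 1$ has valency $\geq 1$, so we can choose an edge $\varepsilon_0 \ni v$, pick a section of $L_e|_{C_v}$ nonvanishing at $p_{\varepsilon_0}$, and extend by zero on all other components to obtain a nonzero element in the image of $\alpha$.

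The existence of such a vertex is the main numerical input. The only class $c \in A_1^{irred}(\Bl_{pt}\bbP^2)$ with $-1 \leq c \cdot e < 0$ is $c = e$ itself, since all classes $a s + b\ell$ with $a,b \geq 0$ satisfy $c \cdot e = a \geq 0$. If every vertex of $\GG$ had $c(v) = e$, then $\beta = \#V(\GG)\cdot e$ and $\beta \cdot e = -\#V(\GG) \leq -2$ (using $\#V(\GG) \geq 2$, which holds because a tree with at least one edge has at least two vertices), contradicting $\beta \cdot e \geq -1$. Hence some vertex satisfies $c(v) \cdot e \geq 0$, $\mathrm{rank}(\alpha) \geq 1$, and $d_\GG = i_\GG - \#E(\GG) \leq -1 < 0 = d_{\GG_0}$, as required. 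The only delicate step is this final numerical argument ruling out the degenerate case where every vertex lies in $\{v : c(v)\cdot e = -1\}$; the cohomological reductions are otherwise routine once $i_\GG$ is identified with $h^1(C, L_e)$.
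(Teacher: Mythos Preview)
Your proof is correct and follows essentially the same route as the paper: reduce $i_{\GG}$ to $h^1(C,L_e)$ via the nef-ness of $D_{\rho_0},D_{\rho_1},D_{\rho_2}$, then use the normalization sequence to bound $h^1(C,L_e)$ by $\#E(\GG)$ minus the rank of the evaluation/gluing map, and finally rule out the degenerate case by a degree count forcing $\beta\cdot e\leq -2$. The only cosmetic difference is in the endgame: the paper argues directly that $ev=0$ forces $\deg(L_e|_{C_v})<0$ for every $v$, hence $\beta\cdot e\leq -\#V(\GG)\leq -2$, without invoking the classification of classes in $A_1^{irred}$ with $c\cdot e=-1$; your identification of $c(v)=e$ is a valid alternative under the irreducibility hypothesis, but the paper's version shows that this hypothesis is not actually needed for the inequality $d_\GG<d_{\GG_0}$.
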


\begin{proof}
    By \Cref{cor:irreducible_decomposition_maps}, it is enough to prove that $d_{\GG} < d_{\GG_0}$.
    
    By \Cref{subsec:geometry_Bl}, a point in $S_{\GG}$ is represented by a tuple $(C,L_s,L_e,s_0,s_1,s_2,s_3)$. We consider the triple $(C,L_e,s_3)$ and denote it by $(C,L,s)$. Note that $s\in H^0(C,L)$ and $\deg(L\mid_{C_v}) = c(v)\cdot e$ for each $v \in V(\GG)$. Thus, our assumptions are $\deg(L) = \beta \cdot e \geq -1$ and  $H^1(C_v,L\mid_{C_v}) = 0$ for all $v \in V(\GG)$ by \Cref{lem:dim_cohomology}.
    
    By \Cref{notation:def_iG_dG}, using that $h^0(\GG,L_{\rho}) = h^0(\GG_0,L_\rho)$ for all $\rho \neq \rho_3$ by the reduction in \Cref{subsec:reduction_Le}, we have that 
    \begin{equation}\label{eq:inequality_main}
        d_{\GG} - d_{\GG_0} = h^0(C,L) - h^0(\bbP^1,\ccO(d)) - \#E(\GG) = h^1(C,L) - \#E(\GG).
    \end{equation}
    
    On the other hand, the normalization sequence on $C$ gives us
    \begin{equation}\label{eq:normalization_example}
        \begin{tikzcd}
            0\arrow{r} & H^0(C,L) \arrow{r} & \oplus_{v\in V(G)} H^0(C_v,L\mid_{C_v}) \arrow{r}{ev} & \oplus_{e \in E(\GG)} \bbC \cdot p_e \arrow{r} & H^1(C,L) \arrow{r} & 0.
        \end{tikzcd}
    \end{equation}
    In particular, $h^1(C,L) \leq \#E(\GG)$.
    To describe the morphism $ev$, we choose an orientation for every edge. Then for $s_v \in H^0(C_v,L\mid_{C_v})$ and $e=(v_1,v_2) \in E(\GG)$,
    \[
        ev(s_v) = \begin{cases}
            s_v(p_e) & \text{ if } v=v_1,\\
            -s_v(p_e) & \text{ if } v=v_2,\\
            0 & \text{ if } v\neq v_1, v_2.
        \end{cases}
    \]

    It follows from \Cref{eq:inequality_main,eq:normalization_example} that    
    \[
        d_{\GG} - d_{\GG_0} \geq 0 \iff h^1(C,L) = \#E(\GG) \iff ev=0.
    \]
    Finally, if $ev = 0$ then $\deg(L\mid_{C_v}) < 0$ for every $v \in V(\GG)$, thus $\beta \cdot e = \deg(L) \leq -2$ since $\#V(\GG) \geq 2$ by the assumption that $\#E(\GG) \geq 1$. This fact is a contradiction. 
\end{proof}

\begin{table}
    \centering
    \begin{tabular}{|c|c|c|c|}
        \hline
        label & graph & $d_{\GG_i}-d_{\GG_0}$ & \\ \hline
        $\GG_0$ & $
            \begin{tikzpicture}[baseline = 0]
                \draw (0,0) node[] {\textbullet};
                \draw (0,0) node[anchor= west] {$2\ell$};
            \end{tikzpicture}$
         & 0  & main component \\ \hline
         
        $\GG_1$ & \begin{tikzpicture}[baseline=0]
                \draw (0,0) node[] {\textbullet};
                \draw (0,0) node[anchor= east] {$2s$};
                \draw (1,0) node[] {\textbullet};
                \draw (1,0) node[anchor= west] {$2e$};
    
                \draw (0,0) -- (1,0);
            \end{tikzpicture} & 0 & extra component \\ \hline
        $\GG_2$ & \begin{tikzpicture}[baseline=0]
                \draw (0,0) node[] {\textbullet};
                \draw (0,0) node[anchor= south] {$s$};
                \draw (1,0) node[] {\textbullet};
                \draw (1,0) node[anchor= south] {$s$};
                \draw (2,0) node[] {\textbullet};
                \draw (2,0) node[anchor= south] {$2e$};
                
                \draw (0,0) -- (2,0);
            \end{tikzpicture} & -1 & $\ccM_{\GG_2}\subset \overline{\ccM}_{\GG_1} \setminus \overline{\ccM}_{\GG_0}$\\ \hline
        $\GG_3$ & \begin{tikzpicture}[baseline=0]
                \draw (0,0) node[] {\textbullet};
                \draw (0,0) node[anchor= south] {$s$};
                \draw (1,0) node[] {\textbullet};
                \draw (1,0) node[anchor= south] {$2e$};
                \draw (2,0) node[] {\textbullet};
                \draw (2,0) node[anchor= south] {$s$};
                
                \draw (0,0) -- (2,0);
            \end{tikzpicture} & -1 & $\ccM_{\GG_3}\subset \overline{\ccM}_{\GG_0}$\\ \hline
        $\GG_4$ & \begin{tikzpicture}[baseline=0]
                \draw (0,0) node[] {\textbullet};
                \draw (0,0) node[anchor= south] {$s$};
                \draw (1,0) node[] {\textbullet};
                \draw (1,0) node[anchor= south] {$0$};
                \draw (2,0) node[] {\textbullet};
                \draw (2,0) node[anchor= south] {$2e$};
                \draw (1,-0.25) node[] {\textbullet};
                \draw (1,-0.25) node[anchor= east] {$s$};
                
                \draw (0,0) -- (2,0);
                \draw (1,0) -- (1,-0.25);
            \end{tikzpicture} & -2 & $\ccM_{\GG_4}\subset \overline{\ccM}_{\GG_0}$\\ \hline
        $\GG_5$ & \begin{tikzpicture}[baseline=0]
                \draw (0,0) node[] {\textbullet};
                \draw (0,0) node[anchor= east] {$s$};
                \draw (1,0) node[] {\textbullet};
                \draw (1,0) node[anchor= west] {$s+2e$};
    
                \draw (0,0) -- (1,0);
            \end{tikzpicture} & $< 0$  & $\ccM_{\GG_5}\subset \overline{\ccM}_{\GG_0}$ \\ \hline
        $\GG_6$ & \begin{tikzpicture}[baseline=0]
                \draw (0,0) node[] {\textbullet};
                \draw (0,0) node[anchor= east] {$e$};
                \draw (1,0) node[] {\textbullet};
                \draw (1,0) node[anchor= west] {$2s+e$};
    
                \draw (0,0) -- (1,0);
            \end{tikzpicture} & $< 0$  & $\ccM_{\GG_6}\subset \overline{\ccM}_{\GG_0}$\\ \hline
        $\GG_7$ & \begin{tikzpicture}[baseline=0]
                \draw (0,0) node[] {\textbullet};
                \draw (0,0) node[anchor= east] {$\ell$};
                \draw (1,0) node[] {\textbullet};
                \draw (1,0) node[anchor= west] {$\ell$};
    
                \draw (0,0) -- (1,0);
            \end{tikzpicture} & $< 0$  & $\ccM_{\GG_7}\subset \overline{\ccM}_{\GG_0}$\\ \hline
        $\GG_8$ & \begin{tikzpicture}[baseline=0]
                \draw (0,0) node[] {\textbullet};
                \draw (0,0) node[anchor= south] {$e$};
                \draw (1,0) node[] {\textbullet};
                \draw (1,0) node[anchor= south] {$e$};
                \draw (2,0) node[] {\textbullet};
                \draw (2,0) node[anchor= south] {$2s$};
                
                \draw (0,0) -- (2,0);
            \end{tikzpicture} & $< 0$  & $\ccM_{\GG_8}\not\subseteq \overline{\ccM}_{\GG_0}$\\ \hline
        $\GG_9$ & \begin{tikzpicture}[baseline=0]
                \draw (0,0) node[] {\textbullet};
                \draw (0,0) node[anchor= south] {$e$};
                \draw (1,0) node[] {\textbullet};
                \draw (1,0) node[anchor= south] {$2s$};
                \draw (2,0) node[] {\textbullet};
                \draw (2,0) node[anchor= south] {$e$};
                
                \draw (0,0) -- (2,0);
            \end{tikzpicture} & $< 0$  & $\ccM_{\GG_9}\subset \overline{\ccM}_{\GG_0}$\\ \hline
        $\GG_{10}$ & \begin{tikzpicture}[baseline=0]
                \draw (0,0) node[] {\textbullet};
                \draw (0,0) node[anchor= south] {$e$};
                \draw (1,0) node[] {\textbullet};
                \draw (1,0) node[anchor= south] {$0$};
                \draw (2,0) node[] {\textbullet};
                \draw (2,0) node[anchor= south] {$e$};
                \draw (1,-0.25) node[] {\textbullet};
                \draw (1,-0.25) node[anchor= east] {$2s$};
                
                \draw (0,0) -- (2,0);
                \draw (1,0) -- (1,-0.25);
            \end{tikzpicture} & $< 0$  & $\ccM_{\GG_{10}}\not\subseteq \overline{\ccM}_{\GG_0}$\\ \hline
        $\GG_{11}$ & \begin{tikzpicture}[baseline=0]
                \draw (0,0) node[] {\textbullet};
                \draw (0,0) node[anchor= south] {$s$};
                \draw (1,0) node[] {\textbullet};
                \draw (1,0) node[anchor= south] {$e$};
                \draw (2,0) node[] {\textbullet};
                \draw (2,0) node[anchor= south] {$\ell$};
                
                \draw (0,0) -- (2,0);
            \end{tikzpicture} & $<0$  & $\ccM_{\GG_{11}} = \emptyset$\\ \hline
        $\GG_{12}$ & \begin{tikzpicture}[baseline=0]
                \draw (0,0) node[] {\textbullet};
                \draw (0,0) node[anchor= south] {$s$};
                \draw (1,0) node[] {\textbullet};
                \draw (1,0) node[anchor= south] {$\ell$};
                \draw (2,0) node[] {\textbullet};
                \draw (2,0) node[anchor= south] {$e$};
                
                \draw (0,0) -- (2,0);
            \end{tikzpicture} & $<0$ & $\ccM_{\GG_{12}} = \emptyset$\\ \hline
        $\GG_{13}$ & \begin{tikzpicture}[baseline=0]
                \draw (0,0) node[] {\textbullet};
                \draw (0,0) node[anchor= south] {$\ell$};
                \draw (1,0) node[] {\textbullet};
                \draw (1,0) node[anchor= south] {$s$};
                \draw (2,0) node[] {\textbullet};
                \draw (2,0) node[anchor= south] {$e$};
                
                \draw (0,0) -- (2,0);
            \end{tikzpicture} & $< 0$  & $\ccM_{\GG_{13}}\subset \overline{\ccM}_{\GG_0}$\\ \hline
        $\GG_{14}$ & \begin{tikzpicture}[baseline=0]
                \draw (0,0) node[] {\textbullet};
                \draw (0,0) node[anchor= south] {$s$};
                \draw (1,0) node[] {\textbullet};
                \draw (1,0) node[anchor= south] {$0$};
                \draw (2,0) node[] {\textbullet};
                \draw (2,0) node[anchor= south] {$\ell$};
                \draw (1,-0.25) node[] {\textbullet};
                \draw (1,-0.25) node[anchor= east] {$e$};
                
                \draw (0,0) -- (2,0);
                \draw (1,0) -- (1,-0.25);
            \end{tikzpicture} & $<0$ & $\ccM_{\GG_{14}}= \emptyset$\\ \hline
        $\GG_{15}$ & \begin{tikzpicture}[baseline=0]
                \draw (0,0) node[] {\textbullet};
                \draw (0,0) node[anchor= south] {$s$};
                \draw (1,0) node[] {\textbullet};
                \draw (1,0) node[anchor= south] {$s$};
                \draw (2,0) node[] {\textbullet};
                \draw (2,0) node[anchor= south] {$e$};
                \draw (3,0) node[] {\textbullet};
                \draw (3,0) node[anchor= south] {$e$};
                
                \draw (0,0) -- (3,0);
            \end{tikzpicture} & $< 0$  & $\ccM_{\GG_{15}}\subset \overline{\ccM}_{\GG_1} \setminus \overline{\ccM}_{\GG_0}$\\ \hline
        $\GG_{16}$ & \begin{tikzpicture}[baseline=0]
                \draw (0,0) node[] {\textbullet};
                \draw (0,0) node[anchor= south] {$s$};
                \draw (1,0) node[] {\textbullet};
                \draw (1,0) node[anchor= south] {$e$};
                \draw (2,0) node[] {\textbullet};
                \draw (2,0) node[anchor= south] {$s$};
                \draw (3,0) node[] {\textbullet};
                \draw (3,0) node[anchor= south] {$e$};
                
                \draw (0,0) -- (3,0);
            \end{tikzpicture} & $< 0$  & $\ccM_{16} = \emptyset$\\ \hline
        $\GG_{17}$ & \begin{tikzpicture}[baseline=0]
                \draw (0,0) node[] {\textbullet};
                \draw (0,0) node[anchor= south] {$s$};
                \draw (1,0) node[] {\textbullet};
                \draw (1,0) node[anchor= south] {$e$};
                \draw (2,0) node[] {\textbullet};
                \draw (2,0) node[anchor= south] {$e$};
                \draw (3,0) node[] {\textbullet};
                \draw (3,0) node[anchor= south] {$s$};
                
                \draw (0,0) -- (3,0);
            \end{tikzpicture} & $< 0$  & $\ccM_{\GG_{17}}\subset \overline{\ccM}_{\GG_0}$\\ \hline
        $\GG_{18}$ & \begin{tikzpicture}[baseline=0]
                \draw (0,0) node[] {\textbullet};
                \draw (0,0) node[anchor= south] {$s$};
                \draw (1,0) node[] {\textbullet};
                \draw (1,0) node[anchor= south] {$s$};
                \draw (2,0) node[] {\textbullet};
                \draw (2,0) node[anchor= south] {$0$};
                \draw (3,0.25) node[] {\textbullet};
                \draw (3,0.25) node[anchor= west] {$e$};
                \draw (3,-0.25) node[] {\textbullet};
                \draw (3,-0.25) node[anchor= west] {$e$};
                
                \draw (0,0) -- (2,0);
                \draw (2,0) -- (3,0.25);
                \draw (2,0) -- (3,-0.25);
            \end{tikzpicture} & $< 0$  & $\ccM_{\GG_{18}}\subset \overline{\ccM}_{\GG_1} \setminus \overline{\ccM}_{\GG_0}$\\ \hline
        $\GG_{19}$& \begin{tikzpicture}[baseline=0]
                \draw (0,0) node[] {\textbullet};
                \draw (0,0) node[anchor= south] {$s$};
                \draw (1,0) node[] {\textbullet};
                \draw (1,0) node[anchor= south] {$e$};
                \draw (2,0) node[] {\textbullet};
                \draw (2,0) node[anchor= south] {$0$};
                \draw (3,0.25) node[] {\textbullet};
                \draw (3,0.25) node[anchor= west] {$s$};
                \draw (3,-0.25) node[] {\textbullet};
                \draw (3,-0.25) node[anchor= west] {$e$};
                
                \draw (0,0) -- (2,0);
                \draw (2,0) -- (3,0.25);
                \draw (2,0) -- (3,-0.25);
            \end{tikzpicture} & $< 0$  & $\ccM_{\GG_{19}}\subset \overline{\ccM}_{\GG_0}$\\ \hline
        $\GG_{20}$ & \begin{tikzpicture}[baseline=0]
                \draw (0,0) node[] {\textbullet};
                \draw (0,0) node[anchor= south] {$e$};
                \draw (1,0) node[] {\textbullet};
                \draw (1,0) node[anchor= south] {$s$};
                \draw (2,0) node[] {\textbullet};
                \draw (2,0) node[anchor= south] {$0$};
                \draw (3,0.25) node[] {\textbullet};
                \draw (3,0.25) node[anchor= west] {$s$};
                \draw (3,-0.25) node[] {\textbullet};
                \draw (3,-0.25) node[anchor= west] {$e$};
                
                \draw (0,0) -- (2,0);
                \draw (2,0) -- (3,0.25);
                \draw (2,0) -- (3,-0.25);
            \end{tikzpicture} & $< 0$  & $\ccM_{\GG_{20}} = \emptyset$\\ \hline
        $\GG_{21}$ & \begin{tikzpicture}[baseline=0]
                \draw (0,0) node[] {\textbullet};
                \draw (0,0) node[anchor= south] {$e$};
                \draw (1,0) node[] {\textbullet};
                \draw (1,0) node[anchor= south] {$e$};
                \draw (2,0) node[] {\textbullet};
                \draw (2,0) node[anchor= south] {$0$};
                \draw (3,0.25) node[] {\textbullet};
                \draw (3,0.25) node[anchor= west] {$s$};
                \draw (3,-0.25) node[] {\textbullet};
                \draw (3,-0.25) node[anchor= west] {$s$};
                
                \draw (0,0) -- (2,0);
                \draw (2,0) -- (3,0.25);
                \draw (2,0) -- (3,-0.25);
            \end{tikzpicture} & $< 0$  &  $\ccM_{\GG_{21}}\subset \overline{\ccM}_{\GG_0}$\\ \hline
        $\GG_{22}$ & \begin{tikzpicture}[baseline=0]
                \draw (0,0.25) node[] {\textbullet};
                \draw (0,0.25) node[anchor= east] {$s$};
                \draw (0,-0.25) node[] {\textbullet};
                \draw (0,-0.25) node[anchor= east] {$s$};
                \draw (1,0) node[] {\textbullet};
                \draw (1,0) node[anchor= south] {$0$};
                \draw (2,0) node[] {\textbullet};
                \draw (2,0) node[anchor= south] {$0$};
                \draw (3,0.25) node[] {\textbullet};
                \draw (3,0.25) node[anchor= south] {$e$};
                \draw (3,-0.25) node[] {\textbullet};
                \draw (3,-0.25) node[anchor= south] {$e$};
                \draw (0,0.25) -- (1,0);
                \draw (0,-0.25) -- (1,0);
                \draw (1,0) -- (2,0);
                \draw (2,0) -- (3,0.25);
                \draw (2,0) -- (3,-0.25);
            \end{tikzpicture} & $< 0$  & $\ccM_{\GG_{22}}\subset \overline{\ccM}_{\GG_0}$\\ \hline
        $\GG_{23}$ & \begin{tikzpicture}[baseline=0]
                \draw (0,0.25) node[] {\textbullet};
                \draw (0,0.25) node[anchor= east] {$s$};
                \draw (0,-0.25) node[] {\textbullet};
                \draw (0,-0.25) node[anchor= east] {$e$};
                \draw (1,0) node[] {\textbullet};
                \draw (1,0) node[anchor= south] {$0$};
                \draw (2,0) node[] {\textbullet};
                \draw (2,0) node[anchor= south] {$0$};
                \draw (3,0.25) node[] {\textbullet};
                \draw (3,0.25) node[anchor= south] {$s$};
                \draw (3,-0.25) node[] {\textbullet};
                \draw (3,-0.25) node[anchor= south] {$e$};
                \draw (0,0.25) -- (1,0);
                \draw (0,-0.25) -- (1,0);
                \draw (1,0) -- (2,0);
                \draw (2,0) -- (3,0.25);
                \draw (2,0) -- (3,-0.25);
            \end{tikzpicture} & $< 0$  & $\ccM_{\GG_{23}}\subset \overline{\ccM}_{\GG_0}$\\ \hline
    \end{tabular}
    \caption{List of all decorated trees $\GG \in \Gamma_{0,0}^{irred}(\Bl_{pt}\bbP^2,2\ell)$.}
    \label{tab:graphs_2l}
\end{table}

We start by writing down all possible partitions $2\ell = \sum_i \beta_i$ with $0\neq \beta_i \in A_1^{irred}(X)$ (see \Cref{fig:Mori_cone}).
We use the list of all partitions to write down $\Gamma_{0,0}^{irred}(\Bl_{pt}\bbP^2,2\ell)/_\simeq$ in \Cref{tab:graphs_2l}, to be used in \Cref{subsubsec:geography_SL}.

\begin{itemize}
    \item $1$ factor : $(2\ell)$
    \item $2$ factors : $(2s,2e),(s,s+2e),(e,2s+e),(s+e,s+e)$,
    \item $3$ factors: $(s,s,2e),(2s,e,e),(s,e,s+e)$,
    \item $4$ factors: $(s,s,e,e)$.
\end{itemize}
Since we are interested in the irreducible components of $\overline{\ccM}_{0,0}(\Bl_{pt}\bbP^2,2\ell)$, it is enough to describe the graphs $\GG \in \Gamma^{irred}_{0,0}(\Bl_{pt}\bbP^2,2\ell)$ satisfying that $d_{\GG}\geq d_{\GG_0}$, for which \Cref{lem:not_component} is very helpful.
Concretely, we replace each factorization $(\beta_1,\ldots, \beta_k)$ in the previous list by $(\beta_1\cdot e, \ldots, \beta_k\cdot e)$ 
using \Cref{table:multiplication}. This process gives the following list:
\begin{itemize}
    \item 1 factor: $(0)$
    \item 2 factors: $(2,-2),(1,-1),(-1,1),(0,0)$
    \item 3 factors: $(1,1,-2),(2,-1,-1),(1,-1,0)$
    \item 4 factors: $(1,1,-1,-1)$
\end{itemize}
By \Cref{lem:not_component}, extra irreducible components can only arise from partitions with $\beta_i \cdot e \leq -2$ for some $i$. Thus, we are left with only two partitions:
\begin{enumerate}
    \item $(2s,2e)$,
    \item $(s,s,2e)$.
\end{enumerate}
In other words, \Cref{lem:not_component} rules out the graphs $\GG_i$ for $i\geq 5$ in \Cref{tab:graphs_2l} and our tentative components are $\overline{\ccM}_{\GG_0},\ldots, \overline{\ccM}_{\GG_4}$, with the notations of \Cref{tab:graphs_2l}. 

For $i\in\{2,3,4\}$, one can check that $d_{\GG_{i}} - d_{\GG_0} < 0$, so they do not produce extra components. Finally, for $i=1$ we have that there is only one edge-contraction from $\GG_1$, namely $\GG_1\to \GG_0$. One can check that $d_{\GG_1} = d_{\GG_0}$, thus $\overline{\ccM}_{\GG_1}$ is an extra irreducible component by \Cref{cor:irreducible_decomposition_maps} and \Cref{rmk:description_components_S_is_combinatorial}.

\begin{proof}[Proof of \Cref{thm:irreducible_decomposition_2l}]
    Combining \Cref{cor:irreducible_decomposition_maps} with our previous computations, we see that $\overline{\ccM}_{\GG_0}$ and $\overline{\ccM}_{\GG_1}$ are the only irreducible components, provided that they are non-empty. It is easy to check that both loci are indeed non-empty.

    To determine their dimensions, we argue as in \Cref{eq:dimension_preimage_Bij} that 
    \[
        \dim(\overline{\ccM}_{\GG_1}) - \dim(\overline{\ccM}_{\GG_0}) = d_{\GG_1} - d_{\GG_0} = 0.
    \]
    Finally, 
    \[
        \dim(\overline{\ccM}_{\GG_0}) = \dim (\overline{\ccM}_{0,0}(\bbP^2,2)) = \mathrm{vdim} (\overline{\ccM}_{0,0}(\bbP^2,2)) = 7. 
    \]
    The first equality holds because both spaces are irreducible and they have a common non-empty open subset and the second equality holds because the space is unobstructed. 
\end{proof}

\subsubsection{The main component of $\overline{\ccM}_{0,0}(\Bl_{\pt}\bbP^2, 2\ell)$}\label{subsubsec:geography_SL}

Combining \Cref{cor:stratification_prod_Pic}, \Cref{eq:open_embedding_stable_maps} and \Cref{rmk:only_irreducible_classes_intersect_maps}, we have a stratification
\[
    \overline{\ccM}_{0,0}(\Bl_{pt}\bbP^2,2\ell) = \bigsqcup_{\substack{\GG \in \Gamma_{0,0}^{irred}(\Bl_{pt}\bbP^2,2\ell)/_\simeq\\ \ccM_{\GG} \neq \emptyset}} \ccM_{\GG}
\]
Thus, we can describe the main component $\overline{\ccM}_{\GG_0}$ by describing the intersections
\[
    \ccM_{\GG} \cap \overline{\ccM}_{\GG_0}
\]
for each $\GG \in \Gamma^{irred}_{0,n}(\Bl_{pt}\bbP^2,2\ell)$. In the following we explain in detail how to describe this intersection for some of the elements in \Cref{tab:graphs_2l}. The rest of the cases can be treated similarly.

\begin{example}\label{ex:GG1}
    Consider the decorated tree $\GG_1$ in \Cref{tab:graphs_2l}. Understanding $\ccM_{\GG_1} \cap \overline{\ccM}_{\GG_0}$ means describing the elements in $\ccM_{\GG_1}$ which can be smooothed. Following the notation in the proof of \Cref{lem:not_component}, a point in $\ccM_{\GG_1}$ is represented by a tuple $(C,L_s,L_e,s_0,s_1,s_2,s_3)$. By \Cref{subsec:reduction_Le}, we can understand $S_{\GG}\cap \overline{S}_{\GG_0}$ by looking at the triple $(C,L_e,s_3)$. We can understand smoothings of the triple $(C,L_e,s_e)$ using limit linear series as in \Cref{subsec:intersection_main}. Be aware that the stability condition used in \Cref{subsec:intersection_main} differs from the one used here, meaning that we not need to add any extra marked points, contrary to what we did in \Cref{subsec:intersection_main}.
    
    Given a point in $S_{\GG_1}$, let $C_1$ denote the component of $C$ of class $2s$ and let $p\in C_{1}$ denote the node. We have, as in \Cref{ex:smoothing_d_-d}, that the section $s_3$ admits a smoothing if and only if $s_3\mid_{C_1}$ vanishes at $p$ with order 2. Intersecting with $\overline{\ccM}_{0,0}(\Bl_{pt}\bbP^2,2\ell)$, we have that
    \[
        \ccM_{\GG_1} \cap \overline{\ccM}_{\GG_0}
    \]
    consists of those stable maps $(C,L_s,L_e,s_0,s_1,s_2,s_3) \in \ccM_{\GG_1}$ such that, the order of $s_3\mid_{C_1}$ at $p$ is 2. This describes the generic point of $\overline{\ccM}_{\GG_1} \cap \overline{\ccM}_{\GG_0}$.
\end{example}

\begin{example}
    For $\GG_2$, one can argue as in \Cref{ex:GG1} and \Cref{subsec:intersection_main} to check that if the triple $(C,L_e,s_3)$ can be smoothed, then $s_3$ needs to be identically zero on the central component of $\GG_2$, which we denote by $C'$. Since $s\cdot D_0 = 1$, the section $s_0$ must also vanish at some point in $C'$, which then becomes a basepoint, meaning that the non-degeneracy condition in \Cref{item:non-degneracy} does not hold at that point. The geometrical interpretation is that the image of $C'$ must have class $s$ and be contained in the exceptional divisor, since $s_3\mid_{C'} = 0$. There exists no such curve in $\Bl_{pt}\bbP^2$. Thus, it is not possible to smooth any stable map in $\ccM_{\GG_2}$, meaning that $\ccM_{\GG_2}\subset \overline{\ccM}_{\GG_1} \setminus \overline{\ccM}_{\GG_0}$ as claimed in \Cref{tab:graphs_2l}.
\end{example}

\begin{example}
    The case $\GG_8$ is similar to the case $\GG_1$, explained in \Cref{ex:GG1}. Namely, the smoothability condition is that the section $s_3$ restricted to the component of degree $2s$ must vanish at the node with order $2$.
\end{example}

\begin{example}\label{ex:empty_maps}
    There are several examples in \Cref{tab:graphs_2l} of graphs $\GG$ with $\ccM_{\GG} = \emptyset$. For example, as explained in \Cref{rmk:only_irreducible_classes_intersect_maps}, $\ccM_{\GG_{11}}$ and $\ccM_{\GG_{12}}$ are empty because there are no irreducible curves in $\Bl_{pt} \bbP^2$ of classes $e$ and $\ell$ which intersect non-trivially. 
    
    The same idea applies to $\ccM_{\GG_{14}}$. Although in that case the the vertices with classes $\ell$ and $e$ are not adjacent, they are both adjacent to the same vertex of class 0, so the argument still applies.

    Finally $\ccM_{\GG_{16}}$ and $\ccM_{\GG_{20}}$ are empty because a curve of class $s$ in $\Bl_{pt}\bbP^2$ cannot meet the exceptional divisor twice. This fact can also be checked using the non-degeneracy condition of stable maps, \Cref{item:non-degneracy}.
\end{example}

\subsection{Example: $\overline{\ccM}_{0,0}(\Bl_\pt \bbP^2,3\ell)$}\label{subsection:3L_blowup}

We compute the irreducible components of $\overline{\ccM}_{0,0}(\Bl_\pt \bbP^2,3\ell)$ using \Cref{cor:irreducible_decomposition_maps} (see also \cite[Example 8.7]{BCM-virt}).

\begin{theorem}\label{thm:irreducible_decomposition_3l}
    The irreducible decomposition of $\overline{\ccM}_{0,0}(\Bl_\pt \bbP^2,3\ell)$ is 
    \[
        \overline{\ccM}_{0,0}(\Bl_\pt \bbP^2,3\ell) = \bigcup_{i=0}^4 \overline{\ccM}_{\GG_i}
    \]
    with $\GG_0,\ldots, \GG_4 \in \Gamma_{0,0}^{st}(\Bl_{pt}\bbP^2,2\ell)$ in \Cref{fig:graphs_3l}. 
    Furthermore, 
    \[
        \dim (\overline{\ccM}_{\GG_i}) = 8
    \]
    for $i\in\{0,1,3,4\}$ and
    \[
        \dim (\overline{\ccM}_{\GG_2}) = 9. 
    \]
\end{theorem}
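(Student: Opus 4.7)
The plan is to apply \Cref{cor:irreducible_decomposition_maps} with $X=\Bl_{pt}\bbP^{2}$ and $\beta=3\ell$, following the same procedure used for $\beta=2\ell$ in the proof of \Cref{thm:irreducible_decomposition_2l} (with \Cref{tab:graphs_2l} as the prototype tabulation). Since $3\ell\cdot e=0\geq -1$, \Cref{lem:not_component} applies and forces every $\GG\in\Gamma^{st}_{0,0}(\Bl_{pt}\bbP^{2},3\ell)$ with $\#E(\GG)>0$ contributing an extra irreducible component to carry at least one vertex $v$ with $c(v)\cdot e\leq -2$. Combined with the description of $A_{1}^{irred}(\Bl_{pt}\bbP^{2})$ in \Cref{subsec:geometry_Bl} and the necessary condition $\GG\in\Gamma_{0,0}^{irred}(\Bl_{pt}\bbP^{2},3\ell)$ coming from \Cref{rmk:only_irreducible_classes_intersect_maps}, this means some vertex $v$ has $c(v)\in\{2e,3e\}$, so only a handful of partitions survive.

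The next step is to enumerate those partitions. Trees containing a $3e$ vertex split the remaining class $3s\in A_{1}^{irred}$ into $1$, $2$ or $3$ nonzero parts, producing $\{3e,3s\}$, $\{3e,2s,s\}$ and $\{3e,s,s,s\}$. Trees containing a $2e$ vertex but no $3e$ split the remaining $3s+e$ in $A_{1}^{irred}$, producing $\{2e,3s+e\}$, $\{2e,e,3s\}$, $\{2e,s,2s+e\}$, $\{2e,2s,\ell\}$ and their four- and five-part refinements. For each such partition I would enumerate the isomorphism classes of decorated trees realizing it---for a 3-vertex partition with pairwise-distinct classes this amounts to the three path topologies determined by the middle vertex, and for four-vertex partitions both star and path topologies must be considered---and compute $d_{\GG}$ via \Cref{notation:def_iG_dG}. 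By the reduction in \Cref{subsec:reduction_Le}, the bundles $\ffL_{s}$ and $\ffL_{s+e}$ are nef, so their contributions cancel against those of $\GG_{0}$ and the calculation of $d_{\GG}-d_{\GG_{0}}$ reduces to computing $h^{0}(C,L_{e})$ via the normalization exact sequence on the dual tree together with \Cref{lem:dim_cohomology}, exactly as in the proof of \Cref{lem:not_component}.

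For each $\GG$ with $d_{\GG}\geq 0$, I would then verify the numerical component criterion of \Cref{rmk:description_components_S_is_combinatorial}: $d_{\GG}\geq d_{\GG'}$ for every $\GG'\in\Gamma^{st}_{0,0}(\Bl_{pt}\bbP^{2},3\ell)$ obtained by a sequence of edge contractions from $\GG$. The main obstacle is bookkeeping this partial order: the ``dominant'' tree $\{3s,3e\}$ is the unique candidate with $d=1$ (this will be $\GG_{2}$, explaining the dimension jump) and it rules out every $\GG$ whose contraction sequence reaches it---for example, two of the three topologies of $\{3e,2s,s\}$ and every topology of $\{2e,e,3s\}$. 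I would finish by checking $\ccM_{\GG}\neq\emptyset$ for each surviving candidate, constructing an explicit stable map using as building blocks strict transforms of lines (class $s$), conics of class $2s$ and conics through $p$, nodal or cuspidal plane cubics (class $3s+e$), and degree-$k$ covers of the exceptional divisor (class $ke$), in the style of \Cref{subsubsec:geography_SL} and \Cref{tab:graphs_2l}. The trees $\GG_{0},\ldots,\GG_{4}$ shown in \Cref{fig:graphs_3l} are precisely those that survive both tests.

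The dimension claim follows from the identity $\dim\overline{\ccM}_{\GG}=\dim\overline{\ccM}_{\GG_{0}}+d_{\GG}$, a direct consequence of \Cref{eq:dimension_preimage_Bij} applied in the stacky setting of \Cref{thm:decomposition_abelian_cone_stratification_stacks} together with the open embedding of \Cref{eq:open_embedding_stable_maps}. Since a generic stable map of class $3\ell$ has smooth domain $\bbP^{1}$ and image a rational plane cubic avoiding the blown-up point, the main component $\overline{\ccM}_{\GG_{0}}$ is birational to $\overline{\ccM}_{0,0}(\bbP^{2},3)$, which is unobstructed of virtual dimension $8$; hence $\dim\overline{\ccM}_{\GG_{0}}=8$, yielding $\dim\overline{\ccM}_{\GG_{i}}=8$ for $i\in\{0,1,3,4\}$ (the components with $d_{\GG_{i}}=0$) and $\dim\overline{\ccM}_{\GG_{2}}=9$ for the unique component with $d_{\GG}=1$.
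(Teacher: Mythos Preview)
Your proposal is correct and follows essentially the same approach as the paper: apply \Cref{cor:irreducible_decomposition_maps}, use \Cref{lem:not_component} to restrict to partitions containing a $2e$ or $3e$ part, compute $d_{\GG}-d_{\GG_0}=h^{1}(\GG,L_e)-\#E(\GG)$ via the reduction in \Cref{subsec:reduction_Le}, check the edge-contraction criterion of \Cref{rmk:description_components_S_is_combinatorial}, verify non-emptiness, and finish with the dimension formula $\dim\overline{\ccM}_{\GG}=\dim\overline{\ccM}_{\GG_0}+d_{\GG}$ together with $\dim\overline{\ccM}_{0,0}(\bbP^{2},3)=8$. One small point: in your bookkeeping remark, not every topology of $\{2e,e,3s\}$ is eliminated by contracting to $\GG_2$; the path $2e\text{--}3s\text{--}e$ already has $d_{\GG}-d_{\GG_0}=-1<0$ and so never enters the candidate list, while the other two topologies do appear as $\GG_6$ and $\GG_9$ in \Cref{fig:graphs_3l} and are then ruled out by the contraction to $\GG_2$, exactly as you describe.
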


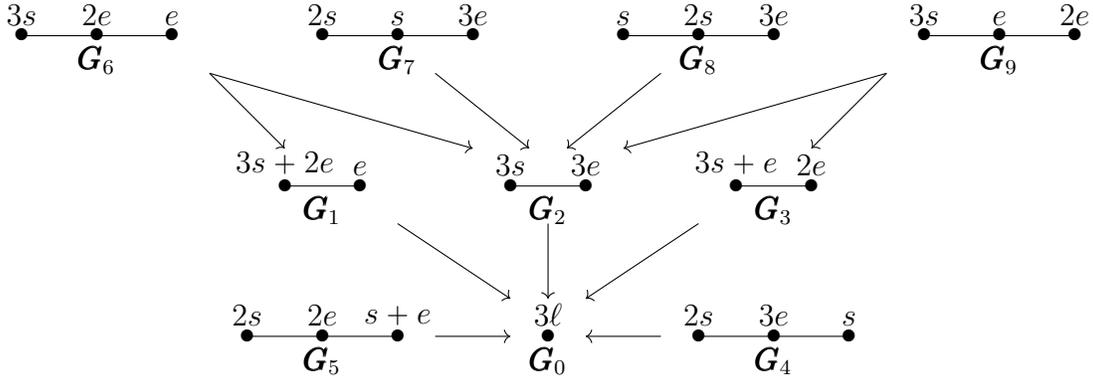
\begin{figure}[h]
    \centering
    \begin{tikzpicture}
                \draw (0,0) node[] {\textbullet};
                \draw (0,0) node[anchor= south] {$3\ell$};
                
                \draw (0,0) node[anchor= north] {$\GG_0$};

                \draw (-4,0) node[] {\textbullet};
                \draw (-4,0) node[anchor= south] {$2s$};
                \draw (-3,0) node[] {\textbullet};
                \draw (-3,0) node[anchor= south] {$2e$};
                \draw (-2,0) node[] {\textbullet};
                \draw (-2,0) node[anchor= south] {$s+e$};
                
                \draw (-4,0) -- (-2,0);
                \draw (-3,0) node[anchor= north] {$\GG_5$};
                \draw[->]        (-1.5,0)   -- (-0.5,0);

                \draw (2,0) node[] {\textbullet};
                \draw (2,0) node[anchor= south] {$2s$};
                \draw (3,0) node[] {\textbullet};
                \draw (3,0) node[anchor= south] {$3e$};
                \draw (4,0) node[] {\textbullet};
                \draw (4,0) node[anchor= south] {$s$};

                \draw (2,0) -- (4,0);
                \draw (3,0) node[anchor= north] {$\GG_4$};
                \draw[->]        (1.5,0)   -- (0.5,0);

                \draw (-3.5,2) node[] {\textbullet};
                \draw (-3.5,2) node[anchor= south] {$3s+2e$};
                \draw (-2.5,2) node[] {\textbullet};
                \draw (-2.5,2) node[anchor= south] {$e$};
                
                \draw (-3.5,2) -- (-2.5,2);
                \draw (-3,2) node[anchor= north] {$\GG_1$};
                \draw[->]        (-2,1.5)   -- (-0.5,0.5);

                \draw (-0.5,2) node[] {\textbullet};
                \draw (-0.5,2) node[anchor= south] {$3s$};
                \draw (0.5,2) node[] {\textbullet};
                \draw (0.5,2) node[anchor= south] {$3e$};
                
                \draw (-0.5,2) -- (0.5,2);
                \draw (0,2) node[anchor= north] {$\GG_2$};
                \draw[->]        (0,1.5)   -- (0,0.5);

                \draw (2.5,2) node[] {\textbullet};
                \draw (2.5,2) node[anchor= south] {$3s+e$};
                \draw (3.5,2) node[] {\textbullet};
                \draw (3.5,2) node[anchor= south] {$2e$};
                
                \draw (2.5,2) -- (3.5,2);
                \draw (3,2) node[anchor= north] {$\GG_3$};
                \draw[->]        (2,1.5)   -- (0.5,0.5);

                \draw (-7,4) node[] {\textbullet};
                \draw (-7,4) node[anchor= south] {$3s$};
                \draw (-6,4) node[] {\textbullet};
                \draw (-6,4) node[anchor= south] {$2e$};
                \draw (-5,4) node[] {\textbullet};
                \draw (-5,4) node[anchor= south] {$e$};
                
                \draw (-7,4) -- (-5,4);
                \draw (-6,4) node[anchor= north] {$\GG_6$};
                \draw[->]        (-4.5,3.5)   -- (-3.5,2.5);
                \draw[->]        (-4.5,3.5)   -- (-1,2.5);

                \draw (-3,4) node[] {\textbullet};
                \draw (-3,4) node[anchor= south] {$2s$};
                \draw (-2,4) node[] {\textbullet};
                \draw (-2,4) node[anchor= south] {$s$};
                \draw (-1,4) node[] {\textbullet};
                \draw (-1,4) node[anchor= south] {$3e$};
                
                \draw (-3,4) -- (-1,4);
                \draw (-2,4) node[anchor= north] {$\GG_7$};
                \draw[->]        (-1.5,3.5)   -- (-0.25,2.5);

                \draw (1,4) node[] {\textbullet};
                \draw (1,4) node[anchor= south] {$s$};
                \draw (2,4) node[] {\textbullet};
                \draw (2,4) node[anchor= south] {$2s$};
                \draw (3,4) node[] {\textbullet};
                \draw (3,4) node[anchor= south] {$3e$};
                
                \draw (1,4) -- (3,4);
                \draw (2,4) node[anchor= north] {$\GG_8$};
                
                \draw[->]        (1.5,3.5)   -- (0.25,2.5);

                \draw (5,4) node[] {\textbullet};
                \draw (5,4) node[anchor= south] {$3s$};
                \draw (6,4) node[] {\textbullet};
                \draw (6,4) node[anchor= south] {$e$};
                \draw (7,4) node[] {\textbullet};
                \draw (7,4) node[anchor= south] {$2e$};
                
                \draw (5,4) -- (7,4);
                \draw (6,4) node[anchor= north] {$\GG_9$};
                \draw[->]        (4.5,3.5)   -- (1,2.5);
                \draw[->]        (4.5,3.5)   -- (3.5,2.5);

            \end{tikzpicture}
    \caption{The list of all $\GG \in \Gamma_{0,0}^{irred}(\Bl_{pt}\bbP^3,3\ell)/_\simeq$ for which $d_{\GG} - d_{\GG_0} \geq 0$. There is a path $\GG_i\to \GG_j$ (ie. composition of edge contractions) if and only if $\Pic_{\GG_i}\subseteq \overline{\Pic_{\GG_j}}$}
    \label{fig:graphs_3l}
\end{figure}

Before proving the theorem, we do some preparations.
Following \Cref{subsubsection:2L_blowup}, we write down all the splittings $3\ell =\sum_i \beta_i$ arising from $\GG\in \Gamma^{irred}_{0,0}(\Bl_{pt}\bbP^2,3\ell)$ such that $\beta_i\cdot e\leq -2$ for some $i$. Recall that this numerical condition is necessary to have $d_{\GG}\geq d_{\GG_0}$ by \Cref{lem:not_component}. The list of such splittings is:
\begin{itemize}
    \item $1$ factor : $(3\ell)$
    \item $2$ factors: $(3s,3e),(3s+e,2e)$
    \item $3$ factors: $(2e,3s,e), (2e,2s,s+e),(2e,s,2s+e),(3e,2s,s)$ 
    \item $4$ factors: $(2e,s,s,s+e), (3e,s,s,s), (2e,e,2s,s)$ 
    \item $5$ factors: $(2e,e,s,s,s)$
\end{itemize}
Intersecting with $e$, we get the following degree-distributions:
\begin{itemize}
    \item $1$ factor : $(0)$
    \item $2$ factors: $(3,-3),(2,-2)$
    \item $3$ factors: $(-2,3,-1), (-2,2,0),(-2,1,1),(-3,2,1)$ 
    \item $4$ factors: $(-2,1,1,0), (-3,1,1,1), (-2,-1,2,1)$ 
    \item $5$ factors: $(-2,-1,1,1,1)$
\end{itemize}

Instead of writing the list of all decorated graphs $\GG \in \Gamma^{irred}_{0,0}(\Bl_{pt}\bbP^2,3\ell)/_\simeq$ that give rise to the previous splittings, we collect in \Cref{fig:graphs_3l} only those graphs for which
\[
    d_{\GG} - d_{\GG_0} = h^1(\GG,L_e)-\#E(\GG)\geq 0.
\]
holds. 

We have that $\overline{\ccM}_{\GG_5} = \emptyset$ by the argument in \Cref{ex:3l_adjacent_vertices_do_not_meet}. One can check that $\ccM_{\GG_i}\neq \emptyset$ for all $i \in \{0,\ldots, 9\}$ with $i\neq 5$. In particular,  $\overline{\ccM}_{\GG_0}$ is a component by \Cref{rmk:G0_is_main_component}.
One can compute that $d_{\GG_2} - d_{\GG_0}=1$ and $d_{\GG_i} - d_{\GG_0} = 0$ for all other graphs in \Cref{fig:graphs_3l}. 
This implies that $\overline{\ccM}_{\GG_i}$ is not an extra component for $i\in\{6,7,8,9\}$ because $\Pic_{\GG_i}\subseteq \Pic_{\GG_2}$ but $d_{\GG_i} - d_{\GG_2} = -1 < 0$. 

We claim that the remaining graphs give rise to extra components. For $i\in\{1,2,3\}$, $\GG_0$ is the only edge contraction of $\GG_i$, thus it is enough to compute that
\[
    d_{\GG_1} - d_{\GG_0} = d_{\GG_3} - d_{\GG_0} = 0 \text{ and } d_{\GG_2}-d_{\GG_0} = 1.
\]
For $i=4$, the graphs obtained from $\GG_4$ by a sequence of edge contractions are $\{\GG_0,\GG',\GG''\}$ with $\GG'$ and $\GG''$ in \Cref{fig:contractions_G4}. Thus, the claim follows from \Cref{rmk:description_components_S_is_combinatorial} after one checks that 
\[
    d_{\GG_4} - d_{\GG'} = d_{\GG_4} - d_{\GG''} = d_{\GG_4} - d_{\GG_0} = 0.
\]

\begin{figure}
    \centering
    \begin{tikzpicture}
                \draw (0,0) node[] {\textbullet};
                \draw (0,0) node[anchor= south] {$2s+3e$};
                \draw (1,0) node[] {\textbullet};
                \draw (1,0) node[anchor= south] {$s$};
                
                \draw (0,0) -- (1,0);
                \draw (0.5,0) node[anchor= north] {$\GG'$};

                \draw (3,0) node[] {\textbullet};
                \draw (3,0) node[anchor= south] {$2s$};
                \draw (4,0) node[] {\textbullet};
                \draw (4,0) node[anchor= south] {$s+3e$};
                
                \draw (3,0) -- (4,0);
                \draw (3.5,0) node[anchor= north] {$\GG''$};

            \end{tikzpicture}
    \caption{Two graphs obtained from $\GG_4$ in \Cref{fig:graphs_3l} by contracting an edge.}
    \label{fig:contractions_G4}
\end{figure}
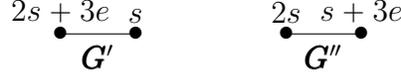

\begin{example}\label{ex:3l_adjacent_vertices_do_not_meet}
    The decorated tree $\GG_5 \in \Gamma^{st}_{0,0}(\Bl_{pt}\bbP^2,3\ell)$ in \Cref{fig:graphs_3l} satisfies that $\overline{\ccM}_{\GG_6} = \emptyset$. Indeed, if there is a stable map in $\ccM_{\GG_5}$, its image is a curve in $X$ containing an irreducible component of class $\ell$ and an irreducible component of class $2e$ which intersect each other. But this is impossible.
\end{example}

\begin{proof}[Proof of \Cref{thm:irreducible_decomposition_3l}]
    The description of the components follows from \Cref{cor:irreducible_decomposition_maps} and our previous computations.

    To determine their dimensions, we argue as in \Cref{eq:dimension_preimage_Bij} that 
    \[
        \dim(\overline{\ccM}_{\GG_i}) - \dim(\overline{\ccM}_{\GG_0}) = d_{\GG_i} - d_{\GG_0}.
    \]
    Finally, as in \Cref{thm:irreducible_decomposition_2l},
    \[
        \dim(\overline{\ccM}_{\GG_0}) = \dim (\overline{\ccM}_{0,0}(\bbP^2,3)) = \mathrm{vdim} (\overline{\ccM}_{0,0}(\bbP^2,3)) = 8. \qedhere
    \]
\end{proof}

\begin{example}\label{ex:3l_no_maps}
    The decorated tree $\GG \in \Gamma^{st}_{0,0}(\Bl_{pt}\bbP^2,3\ell)$ in \Cref{fig:example_3l_no_maps} satisfies that $S_{\GG}$ is an extra component of $S$ because
    \[
        d_{\GG} - d_{\GG_0} = 1.
    \]
    However, $\overline{\ccM}_{\GG} = \emptyset$ by \Cref{rmk:condition_MG_nonempty} because  $s+3e\notin A_1^{irred}(\Bl_{pt}\bbP^2)$, thus $\GG\notin \Gamma^{irred}_{0,0}(\Bl_{pt}\bbP^2,3\ell)$.
\end{example}

\begin{figure}
    \centering
    \begin{tikzpicture}
                \draw (0,0) node[] {\textbullet};
                \draw (0,0) node[anchor= south] {$s+3e$};
                \draw (1,0) node[] {\textbullet};
                \draw (1,0) node[anchor= south] {$2s$};
                
                \draw (0,0) -- (1,0);
            \end{tikzpicture}
    \caption{A decorated tree $\GG \in \Gamma^{st}_{0,0}(\Bl_{pt}\bbP^2,3\ell)$ such that $\overline{\ccM}_{\GG} = \emptyset$.}
    \label{fig:example_3l_no_maps}
\end{figure}

\section{Irreducible components and the contraction morphism}\label{sec:contraction}

Fix $X,\beta$ as in \Cref{assumption:toric}. In general, there is no morphism $\overline{\ccM}_{g,n}(X,\beta) \to \ccQ_{g,n}(X,\beta)$ extending the identity on the common locus where the source curve is smooth. However, by \cite[Section 5]{Cobos}, there is a closed substack $\overline{\ccM}^c_{g,n}(X,\beta)$ of $\overline{\ccM}_{g,n}(X,\beta)$, containing the main component, together with a contraction morphism
\[
    c_X\colon \overline{\ccM}^c_{g,n}(X,\beta) \to \ccQ_{g,n}(X,\beta).
\]
The morphism $c_X$ is constructed using an epic closed embedding (see \cite[Def. 4.2.2]{Cobos}), which always exists by \cite[Thm. 4.2.5]{Cobos}. 
Many basic questions about the locus $\overline{\ccM}^c_{g,n}(X,\beta)$ remain unanswered.

We focus in the case $g=0$, $X=\Bl_{pt}\bbP^2$ and $\beta = 2\ell$. By \Cref{sec:components_qmaps}, we need that $n\geq 2$, so we fix $n=2$. In this particular case, we describe all the irreducible components of $\overline{\ccM}_{0,2}(\Bl_\pt \bbP^2,2\ell)$ and $\ccQ_{0,2}(\Bl_\pt \bbP^2,2\ell)$, explain where $c_X$ maps each component and check that the locus $\overline{\ccM}^c_{0,2}(\Bl_{pt}\bbP^2,2\ell)$ is a union of irreducible components.

\subsection{Components of $\overline{\ccM}_{0,2}(\Bl_\pt \bbP^2,2\ell)$}\label{subsec:comps_maps_Bl_2marks}

Recall that we computed the irreducible components of $\overline{\ccM}_{0,0}(\Bl_\pt \bbP^2,2\ell)$ in \Cref{subsubsection:2L_blowup}. The arguments about splittings of curve classes used there are independent of the number of marked points, so the only relevant splittings of the class $2\ell$ are $(2s,2e)$ and $(s,s,2e)$, both for maps and quasimaps. 

The decorated trees $\GG\in\Gamma^{irred}_{0,2}(X,\beta)$ with $d_{\GG}\geq d_{\GG_0}$ are listed in \Cref{fig:graphs_2l_2}. One can check that $\overline{\ccM}_{\GG}\neq \emptyset$ for all of them. The numbers $d_{\GG} - d_{\GG_0}$ were computed in \Cref{subsubsection:2L_blowup}. Combining the previous arguments, we get the following result.

\begin{proposition}\label{prop:components_maps_2l_2}
    The irreducible decomposition of $\overline{\ccM}_{0,2}(\Bl_\pt \bbP^2,2\ell)$ is
    \[
        \overline{\ccM}_{0,2}(\Bl_\pt \bbP^2,2\ell) =  \bigcup_{i=0}^{4}\overline{\ccM}_{\GG_i}
    \]
    with $\GG_0,\ldots, \GG_4$ the decorated trees in \Cref{fig:graphs_2l_2}.
\end{proposition}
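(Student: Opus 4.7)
The plan is to apply \Cref{cor:irreducible_decomposition_maps} directly, using the combinatorial input already assembled in \Cref{subsubsection:2L_blowup}. First I would note that the reduction in \Cref{subsec:reduction_Le} and the numerical criterion of \Cref{lem:not_component} depend only on the splitting of $\beta=2\ell$ among the vertices and on $h^1(\GG,L_e)$, not on the marking function $m$. Hence the identification of the relevant partitions of $2\ell$ carried out in \Cref{subsubsection:2L_blowup} is still valid here: the only splittings that can possibly yield $d_{\GG}\geq d_{\GG_0}$ are $(2s,2e)$ and $(s,s,2e)$, together with the trivial partition $(2\ell)$.

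Next I would enumerate all elements of $\Gamma^{irred}_{0,2}(\Bl_{pt}\bbP^2,2\ell)/{\simeq}$ supported on these three partitions and satisfying the stability condition \eqref{eq:stability_decorated_trees}. This produces precisely the five trees $\GG_0,\ldots,\GG_4$ of Figure~\ref{fig:graphs_2l_2} (any other distribution of the two marks among the vertices would either be isomorphic to one of these, or correspond to a partition already excluded). For each such $\GG_i$, the integer $d_{\GG_i}-d_{\GG_0}$ coincides with the corresponding value computed in \Cref{subsubsection:2L_blowup}, since $d_{\GG}$ depends only on the decorated graph and not on the marks. Thus the inequalities $d_{\GG_i}\geq d_{\GG'}$ for every $\GG'\in\Gamma^{st}_{0,2}(\Bl_{pt}\bbP^2,2\ell)$ with $\Pic_{\GG_i}\subseteq\overline{\Pic_{\GG'}}$ follow from the analogous checks made there, after observing that edge-contractions on a marked tree correspond to edge-contractions on the underlying unmarked tree.

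The remaining point is to verify that $\ccM_{\GG_i}\neq\emptyset$ for each $i\in\{0,\ldots,4\}$. For $\GG_0$ this is clear since $\overline{\ccM}_{0,2}(\bbP^2,2)$ is non-empty and contains maps transverse to the exceptional divisor, which pull back to $\overline{\ccM}_{\GG_0}$. For each of the remaining graphs one exhibits an explicit stable map: a smooth conic of class $2s$ meeting a double cover of the exceptional divisor transversally, equipped with marks distributed as prescribed by $\GG_i$. The existence of such configurations is straightforward since $s\cdot e=1$ and the conic meets the exceptional divisor in two (possibly coincident) points.

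The main (and only) obstacle is combinatorial bookkeeping: making sure that the list $\GG_0,\ldots,\GG_4$ exhausts the stable decorated marked trees arising from the two non-trivial partitions, and that no additional tree satisfies both the numerical and the non-emptiness conditions. Once this enumeration is complete, \Cref{cor:irreducible_decomposition_maps} delivers the irreducible decomposition immediately.
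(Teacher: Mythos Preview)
Your approach is essentially the same as the paper's: reduce to the splittings identified in \Cref{subsubsection:2L_blowup} (since \Cref{lem:not_component} and the computation of $d_{\GG}$ are insensitive to markings), enumerate the marked trees, check non-emptiness, and invoke \Cref{cor:irreducible_decomposition_maps}. One small slip: you claim that enumerating all stable irreducible $2$-marked trees supported on the partitions $(2\ell)$, $(2s,2e)$, $(s,s,2e)$ yields exactly the five trees $\GG_0,\ldots,\GG_4$, but in fact the $(s,s,2e)$ partition contributes additional marked trees; these are eliminated only after observing (as in \Cref{subsubsection:2L_blowup}) that they satisfy $d_{\GG}<d_{\GG_0}$, which you should state explicitly before concluding that the list in \Cref{fig:graphs_2l_2} is exhaustive.
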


\begin{figure}
    \centering
    \begin{subfigure}{.2\textwidth}
        \begin{center}
            \begin{tikzpicture}
                \draw (0,0) node[] {\textbullet};
                \draw (0,0) node[anchor= south] {$2\ell$};
                \draw (0,0) -- (-0.3,-0.5);
                \draw (-0.3,-0.5) node[anchor= east] {$1$};
                \draw (0,0) -- (0.3,-0.5);
                \draw (0.3,-0.5) node[anchor= west] {$2$};
            \end{tikzpicture}
            \caption*{$\GG_0$}
            
        \end{center}
    \end{subfigure}%
    \begin{subfigure}{.2\textwidth}
        \begin{center}
            \begin{tikzpicture}
                \draw (0,0) node[] {\textbullet};
                \draw (0,0) node[anchor= south] {$2s$};
                \draw (1,0) node[] {\textbullet};
                \draw (1,0) node[anchor= south] {$2e$};
                \draw (0,0) -- (1,0);
                \draw (0,0) -- (-0.3,-0.5);
                \draw (-0.3,-0.5) node[anchor= east] {$1$};
                \draw (1,0) -- (1.3,-0.5);
                \draw (1.3,-0.5) node[anchor= west] {$2$};
            \end{tikzpicture}
            \caption*{$\GG_1$}
            
        \end{center}
    \end{subfigure}%
    \begin{subfigure}{.2\textwidth}
        \begin{center}
            \begin{tikzpicture}
                \draw (0,0) node[] {\textbullet};
                \draw (0,0) node[anchor= south] {$2s$};
                \draw (1,0) node[] {\textbullet};
                \draw (1,0) node[anchor= south] {$2e$};
                \draw (0,0) -- (1,0);
                \draw (0,0) -- (-0.3,-0.5);
                \draw (-0.3,-0.5) node[anchor= east] {$2$};
                \draw (1,0) -- (1.3,-0.5);
                \draw (1.3,-0.5) node[anchor= west] {$1$};
            \end{tikzpicture}
            \caption*{$\GG_2$}
            
        \end{center}
    \end{subfigure}%
    \begin{subfigure}{.2\textwidth}
        \begin{center}
            \begin{tikzpicture}
                \draw (0,0) node[] {\textbullet};
                \draw (0,0) node[anchor= south] {$2s$};
                \draw (1,0) node[] {\textbullet};
                \draw (1,0) node[anchor= south] {$2e$};
                \draw (0,0) -- (1,0);
                \draw (1,0) -- (0.7,-0.5);
                \draw (0.7,-0.5) node[anchor= east] {$2$};
                \draw (1,0) -- (1.3,-0.5);
                \draw (1.3,-0.5) node[anchor= west] {$1$};
            \end{tikzpicture}
            \caption*{$\GG_3$}
            
        \end{center}
    \end{subfigure}%
    \begin{subfigure}{.2\textwidth}
        \begin{center}
            \begin{tikzpicture}
                \draw (0,0) node[] {\textbullet};
                \draw (0,0) node[anchor= south] {$2s$};
                \draw (1,0) node[] {\textbullet};
                \draw (1,0) node[anchor= south] {$2e$};
                \draw (0,0) -- (1,0);
                \draw (0,0) -- (-0.3,-0.5);
                \draw (-0.3,-0.5) node[anchor= east] {$2$};
                \draw (0,0) -- (0.3,-0.5);
                \draw (0.3,-0.5) node[anchor= west] {$1$};
            \end{tikzpicture}
            \caption*{$\GG_4$}
            
        \end{center}
    \end{subfigure}
    \caption{The list of decorated marked trees $\GG\in \Gamma^{irred}_{0,2}(\Bl_{pt}\bbP^2,2\ell)$ with $d_{\GG}\geq d_{\GG_0}$.}
    \label{fig:graphs_2l_2}
\end{figure}

\subsection{Components of $\ccQ_{0,2}(\Bl_\pt \bbP^2,2\ell)$}

Similarly, we can compute the irreducible decomposition of $\ccQ_{0,2}(\Bl_\pt \bbP^2,2\ell)$. The only difference with \Cref{subsec:comps_maps_Bl_2marks} is the stability condition in $\Pic$, see \Cref{rmk:quasimap_stability}. In practice, this means that some of the graphs appearing in \Cref{prop:components_maps_2l_2} will not satisfy the quasimap stability. Namely, the graphs $\GG_3$ and $\GG_4$ do not satisfy the condition in \Cref{eq:quasimap_stability_extra_condition} because they have a vertex with a single edge and no marked points.

\begin{proposition}\label{prop:components_qmaps_2l_2}
    The irreducible decomposition of $\ccQ_{0,2}(\Bl_\pt \bbP^2,2\ell)$ is
    \[
        \ccQ_{0,2}(\Bl_\pt \bbP^2,2\ell) =  \bigcup_{i=0}^{2}\ccQ_{\GG_i}
    \]
    with $\GG_0,\ldots, \GG_2$ the decorated trees in \Cref{fig:graphs_2l_2}.
\end{proposition}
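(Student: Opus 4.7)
The plan is to mirror the proof of \Cref{prop:components_maps_2l_2}, applying \Cref{cor:irreducible_decomposition_qmaps} in place of \Cref{cor:irreducible_decomposition_maps}, and then carefully accounting for the stronger stability condition \eqref{eq:quasimap_stability_extra_condition} that quasimaps impose on the combinatorial data.

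First I would observe that the analysis of splittings of $2\ell$ carried out in \Cref{subsubsection:2L_blowup} is purely combinatorial in the curve class and independent of the number of marked points. Combined with \Cref{lem:not_component}, this reduces the candidate splittings to $(2\ell)$, $(2s, 2e)$ and $(s, s, 2e)$; the last of these already produced no component for $n=0$ because $d_{\GG} - d_{\GG_0} = -1$, and the same computation of $d_{\GG}$ applies verbatim for $n=2$, so it can be discarded here too. Thus only the two splittings $(2\ell)$ and $(2s,2e)$ need to be enumerated for quasimaps.

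Next I would list all stable (for quasimaps) decorated $2$-marked trees in $\Gamma^{irred,\ccQ}_{0,2}(\Bl_{pt}\bbP^2, 2\ell)$ realising these splittings. For the single-vertex splitting this yields $\GG_0$. For the splitting $(2s,2e)$, the candidates obtained in \Cref{prop:components_maps_2l_2} are $\GG_1, \GG_2, \GG_3, \GG_4$ of \Cref{fig:graphs_2l_2}. The key step is to observe that $\GG_3$ and $\GG_4$ fail the extra quasimap stability condition \eqref{eq:quasimap_stability_extra_condition}: the vertex of class $2s$ in $\GG_3$ (resp.\ the vertex of class $2e$ in $\GG_4$) carries no marked points and has valence $1$, so $\#m^{-1}(v) + \mathrm{val}(v) = 1 < 2$, and hence these graphs do not belong to $\Gamma_{0,2}^{st,\ccQ}(\Bl_{pt}\bbP^2, 2\ell)$. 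By contrast, $\GG_1$ and $\GG_2$ put one marked point on each vertex, so both the quasimap stability condition and the decorated-tree stability condition \eqref{eq:stability_decorated_trees} are satisfied.

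Finally I would verify the two remaining hypotheses of \Cref{cor:irreducible_decomposition_qmaps}. The numerical inequalities $d_{\GG_i} \geq d_{\GG'}$ for every $\GG'$ obtained from $\GG_i$ by edge contractions are inherited from the computations in \Cref{subsubsection:2L_blowup}: indeed $d_{\GG_i}-d_{\GG_0}=0$ for $i=1,2$ (the value of $d_{\GG}$ depends only on the underlying decorated tree and not on the marked points, by \Cref{notation:h1_decorated_graph} and \Cref{notation:def_iG_dG}), and for $i=1,2$ the only possible edge contraction leads to $\GG_0$. Non-emptiness of $\ccQ_{\GG_i}$ for $i=0,1,2$ is straightforward to exhibit: for $\GG_0$ it is the locus of quasimaps with smooth source, and for $\GG_i$ with $i=1,2$ one produces an explicit stable quasimap with two components of classes $2s$ and $2e$ meeting at a node, with one mark on each component as prescribed by $\GG_i$. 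The main (minor) obstacle is bookkeeping: checking that no further decorated tree arising from $(2s,2e)$ or $(s,s,2e)$ with a legal distribution of marks survives the quasimap stability condition while still satisfying $d_{\GG}\geq d_{\GG_0}$, which the discussion above rules out exhaustively.
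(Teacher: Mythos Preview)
Your proposal is correct and follows essentially the same approach as the paper: the paper's argument is simply to note that the analysis for maps in \Cref{subsec:comps_maps_Bl_2marks} carries over verbatim, with the sole difference that $\GG_3$ and $\GG_4$ fail the quasimap stability condition \eqref{eq:quasimap_stability_extra_condition} because each has a vertex with a single edge and no marked points. Your write-up is more detailed than the paper's (which is essentially one sentence), but the logic is identical.
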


\subsection{The contraction morphism}

Since $\overline{\ccM}^c_{0,2}(\Bl_{pt} \bbP^2,2\ell)$ is closed in $\overline{\ccM}_{0,2}(\Bl_{pt} \bbP^2,2\ell)$, we have that for each $i\in\{0,\ldots, 4\}$,
\[
    \ccM_{\GG_i}\subseteq \overline{\ccM}^c_{0,2}(\Bl_{pt} \bbP^2,2\ell) \Rightarrow \overline{\ccM}_{\GG_i}\subseteq \overline{\ccM}^c_{0,2}(\Bl_{pt} \bbP^2,2\ell) 
\]
Using \cite[Prop. 5.2.1]{Cobos}, it is easy to check that $\ccM_{\GG_i} \subseteq \overline{\ccM}^c_{0,2}(\Bl_{pt} \bbP^2,2\ell)$ for $i=0,\ldots, 3$. Thus
\[
    \bigcup_{i=0}^3 \overline{\ccM}_{\GG_i} \subseteq \overline{\ccM}^c_{0,2}(\Bl_{pt} \bbP^2,2\ell).
\]

Furthermore, $c_X$ must map irreducible components into irreducible components. We indicate this assignment of component with arrows in \Cref{fig:contraction_sketch}. One can check it using \cite[Prop. 5.2.1]{Cobos}. By the same reference, $\ccM_{\GG_4}\not\subset \overline{\ccM}^c_{0,2}(\Bl_{pt} \bbP^2,2\ell)$. In fact,  let $Q=(C,p_1,p_2,L_s,L_e,s_0,\ldots, s_3)\in \ccM_{\GG_4}$, let $C_{2s}$ denote the irreducible component with degree $2s$ and $P$ the node. Then, by \cite[Prop. 5.2.1]{Cobos}, 
\[
    Q\in  \overline{\ccM}^c_{0,2}(\Bl_{pt} \bbP^2,2\ell) \iff \ord_{P}(s_3\mid_{C_{2s}}) \geq - 2e\cdot e = 2.
\]
By our description of the smoothable locus in \Cref{subsubsec:geography_SL}, we see that
\[
    \ccM_{\GG_4}\cap \overline{\ccM}^c_{0,2}(\Bl_{pt} \bbP^2,2\ell) \subset \overline{\ccM}_{\GG_0},
\]
thus
\[
    \overline{\ccM}_{\GG_4}\cap \overline{\ccM}^c_{0,2}(\Bl_{pt} \bbP^2,2\ell) \subset \overline{\ccM}_{\GG_0}.
\]
Thus, we obtain the following result.

\begin{figure}
    \centering
        \begin{tikzpicture}
                \draw (-3,0) node[] {\textbullet};
                \draw (-3,0) node[anchor= south] {$2s$};
                \draw (-2,0) node[] {\textbullet};
                \draw (-2,0) node[anchor= south] {$2e$};
                \draw (-3,0) -- (-2,0);
                \draw (-2,0) -- (-2.3,-0.5);
                \draw (-2.3,-0.5) node[anchor= east] {$1$};
                \draw (-2,0) -- (-1.7,-0.5);
                \draw (-1.7,-0.5) node[anchor= west] {$2$};
                \draw (-2.5,0.5) node[anchor= south] {$\GG_3$};
                \draw[->]        (-2,-0.75)   -- (-0.75,-1.75);

                \draw (0,0) node[] {\textbullet};
                \draw (0,0) node[anchor= south] {$2\ell$};
                \draw (0,0) -- (-0.3,-0.5);
                \draw (-0.3,-0.5) node[anchor= east] {$1$};
                \draw (0,0) -- (0.3,-0.5);
                \draw (0.3,-0.5) node[anchor= west] {$2$};
                \draw (0,0.5) node[anchor= south] {$\GG_0$};
                \draw[->]        (0,-0.75)   -- (0,-1.5);

                \draw (2,0) node[] {\textbullet};
                \draw (2,0) node[anchor= south] {$2s$};
                \draw (3,0) node[] {\textbullet};
                \draw (3,0) node[anchor= south] {$2e$};
                \draw (2,0) -- (3,0);
                \draw (3,0) -- (3.3,-0.5);
                \draw (3.3,-0.5) node[anchor= west] {$2$};
                \draw (2,0) -- (1.7,-0.5);
                \draw (1.7,-0.5) node[anchor= east] {$1$};
                \draw (2.5,0.5) node[anchor= south] {$\GG_1$};
                \draw[->]        (2.5,-0.75)   -- (2.5,-1.5);

                \draw (5,0) node[] {\textbullet};
                \draw (5,0) node[anchor= south] {$2s$};
                \draw (6,0) node[] {\textbullet};
                \draw (6,0) node[anchor= south] {$2e$};
                \draw (5,0) -- (6,0);
                \draw (6,0) -- (6.3,-0.5);
                \draw (6.3,-0.5) node[anchor= west] {$1$};
                \draw (5,0) -- (4.7,-0.5);
                \draw (4.7,-0.5) node[anchor= east] {$2$};
                \draw (5.5,0.5) node[anchor= south] {$\GG_2$};
                \draw[->]        (5.5,-0.75)   -- (5.5,-1.5);

                \draw (8,0) node[] {\textbullet};
                \draw (8,0) node[anchor= south] {$2s$};
                \draw (9,0) node[] {\textbullet};
                \draw (9,0) node[anchor= south] {$2e$};
                \draw (8,0) -- (9,0);
                \draw (8,0) -- (8.3,-0.5);
                \draw (8.3,-0.5) node[anchor= west] {$1$};
                \draw (8,0) -- (7.7,-0.5);
                \draw (7.7,-0.5) node[anchor= east] {$2$};
                \draw (8.5,0.5) node[anchor= south] {$\GG_4$};

                \draw (2,0) node[] {\textbullet};
                \draw (2,0) node[anchor= south] {$2s$};
                \draw (3,0) node[] {\textbullet};
                \draw (3,0) node[anchor= south] {$2e$};
                \draw (2,0) -- (3,0);
                \draw (3,0) -- (3.3,-0.5);
                \draw (3.3,-0.5) node[anchor= west] {$2$};
                \draw (2,0) -- (1.7,-0.5);
                \draw (1.7,-0.5) node[anchor= east] {$1$};
                \draw (2.5,0.5) node[anchor= south] {$\GG_1$};

                \draw (0,-2) node[] {\textbullet};
                \draw (0,-2) node[anchor= south] {$2\ell$};
                \draw (0,-2) -- (-0.3,-2.5);
                \draw (-0.3,-2.5) node[anchor= east] {$1$};
                \draw (0,-2) -- (0.3,-2.5);
                \draw (0.3,-2.5) node[anchor= west] {$2$};
                \draw (0,-2.5) node[anchor= north] {$\GG_0$};

                \draw (2,-2) node[] {\textbullet};
                \draw (2,-2) node[anchor= south] {$2s$};
                \draw (3,-2) node[] {\textbullet};
                \draw (3,-2) node[anchor= south] {$2e$};
                \draw (2,-2) -- (3,-2);
                \draw (3,-2) -- (3.3,-2.5);
                \draw (3.3,-2.5) node[anchor= west] {$2$};
                \draw (2,-2) -- (1.7,-2.5);
                \draw (1.7,-2.5) node[anchor= east] {$1$};
                \draw (2.5,-2.5) node[anchor= north] {$\GG_1$};

                \draw (5,-2) node[] {\textbullet};
                \draw (5,-2) node[anchor= south] {$2s$};
                \draw (6,-2) node[] {\textbullet};
                \draw (6,-2) node[anchor= south] {$2e$};
                \draw (5,-2) -- (6,-2);
                \draw (6,-2) -- (6.3,-2.5);
                \draw (6.3,-2.5) node[anchor= west] {$1$};
                \draw (5,-2) -- (4.7,-2.5);
                \draw (4.7,-2.5) node[anchor= east] {$2$};
                \draw (5.5,-2.5) node[anchor= north] {$\GG_2$};

                \draw (-6,0) node[] {$\overline{\ccM}_{0,2}(\Bl_\pt \bbP^2,2\ell)$};
                \draw (-6,-2) node[] {$\ccQ_{0,2}(\Bl_\pt \bbP^2,2\ell)$};
        \end{tikzpicture}
    \caption{At the top, the list of $\GG\in \Gamma^{irred}_{0,2}(\Bl_{pt}\bbP^2,2\ell)/_\simeq$ for which $\overline{\ccM}_{\GG}$ is a component of $\overline{\ccM}_{0,2}(\Bl_\pt \bbP^2,2\ell)$. At the bottom, the list of $\GG\in \Gamma^{irred,\ccQ}_{0,2}(\Bl_{pt}\bbP^2,2\ell)/_\simeq$ for which $\overline{\ccQ}_{\GG}$ is a component of $\ccQ_{0,2}(\Bl_\pt \bbP^2,2\ell)$. We add an arrow $\GG_i\to \GG_j$ if $c_X(\overline{\ccM}_{\GG_i})\subseteq \ccQ_{\GG_j}$.}
    \label{fig:contraction_sketch}
\end{figure}

\begin{proposition}\label{prop:main_is_union_of_components}
    The irreducible decomposition of $\overline{\ccM}^{c}_{0,2}(\Bl_\pt \bbP^2,2\ell)$ is
    \[
        \overline{\ccM}^{c}_{0,2}(\Bl_\pt \bbP^2,2\ell) =  \bigcup_{i=0}^{3}\overline{\ccM}_{\GG_i}
    \]
    with $\GG_0,\ldots, \GG_3$ the decorated trees in \Cref{fig:graphs_2l_2}.
\end{proposition}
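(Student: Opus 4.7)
The strategy is to combine the irreducible decomposition from \Cref{prop:components_maps_2l_2} with the fact that $\overline{\ccM}^c_{0,2}(\Bl_{pt}\bbP^2,2\ell)$ is closed in $\overline{\ccM}_{0,2}(\Bl_{pt}\bbP^2,2\ell)$. Since the components $\overline{\ccM}_{\GG_i}$ for $i=0,\ldots,4$ exhaust $\overline{\ccM}_{0,2}(\Bl_{pt}\bbP^2,2\ell)$ and each $\ccM_{\GG_i}$ is a nonempty open dense subset of the corresponding component, it suffices to decide, for each $i$, whether $\ccM_{\GG_i}$ (or its smoothable sublocus) lies in $\overline{\ccM}^c_{0,2}(\Bl_{pt}\bbP^2,2\ell)$. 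By closedness, $\ccM_{\GG_i}\subseteq \overline{\ccM}^c_{0,2}(\Bl_{pt}\bbP^2,2\ell)$ immediately upgrades to $\overline{\ccM}_{\GG_i}\subseteq \overline{\ccM}^c_{0,2}(\Bl_{pt}\bbP^2,2\ell)$.

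First I would apply \cite[Prop. 5.2.1]{Cobos} to the generic point of each stratum $\ccM_{\GG_i}$ for $i\in\{0,1,2,3\}$. For $\GG_0$ the claim is tautological, as the main component is always contained in $\overline{\ccM}^c$. For $\GG_1,\GG_2,\GG_3$, the decorated marked dual trees have the property that the unique node separating the $2s$-component $C_{2s}$ from the $2e$-component $C_{2e}$ either carries a marked point on the same irreducible component as the node or is such that the vanishing condition from \cite[Prop. 5.2.1]{Cobos} is automatically met, because there are no rational tails on $C_{2s}$ to be contracted in a way that creates a base point. This yields the inclusion $\bigcup_{i=0}^3 \overline{\ccM}_{\GG_i} \subseteq \overline{\ccM}^c_{0,2}(\Bl_{pt}\bbP^2,2\ell)$.

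Next I would handle $\GG_4$, which is the only subtle case: here both marked points lie on $C_{2s}$ and the $2e$-component is a rational tail. By \cite[Prop. 5.2.1]{Cobos}, a stable map $(C,p_1,p_2,L_s,L_e,s_0,\ldots,s_3)\in \ccM_{\GG_4}$ lies in $\overline{\ccM}^c_{0,2}$ if and only if $\ord_{P}(s_3\mid_{C_{2s}}) \geq -2e\cdot e = 2$, where $P$ is the node. Combining this with the smoothability analysis performed in \Cref{ex:GG1} for $\GG_1$ (the same limit linear series argument applies here verbatim, as neither marking lies on $C_{2e}$), this vanishing condition characterises precisely the locus of smoothable maps, that is,
\[
    \ccM_{\GG_4}\cap \overline{\ccM}^c_{0,2}(\Bl_{pt}\bbP^2,2\ell) \subseteq \overline{\ccM}_{\GG_0}.
\]
Taking closures yields $\overline{\ccM}_{\GG_4}\cap \overline{\ccM}^c_{0,2}(\Bl_{pt}\bbP^2,2\ell) \subseteq \overline{\ccM}_{\GG_0}$, so $\overline{\ccM}_{\GG_4}$ contributes no extra component.

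Finally, combining the two inclusions with the decomposition of \Cref{prop:components_maps_2l_2} yields the equality
\[
    \overline{\ccM}^c_{0,2}(\Bl_{pt}\bbP^2,2\ell) = \bigcup_{i=0}^3 \overline{\ccM}_{\GG_i}.
\]
Each $\overline{\ccM}_{\GG_i}$ on the right-hand side is already known to be irreducible and none is contained in another by \Cref{prop:components_maps_2l_2}, so this is the irreducible decomposition. The main technical obstacle is really the $\GG_4$ case, where one must verify both that the locus where the contraction morphism is defined agrees with the smoothability condition and that this locus is proper inside $\overline{\ccM}_{\GG_4}$; the former uses \cite[Prop. 5.2.1]{Cobos} and the latter uses the limit linear series computation from \Cref{subsubsec:geography_SL}.
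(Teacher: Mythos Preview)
Your proof is correct and follows essentially the same approach as the paper: use closedness to reduce to the strata $\ccM_{\GG_i}$, invoke \cite[Prop.~5.2.1]{Cobos} to show $\ccM_{\GG_i}\subseteq\overline{\ccM}^c$ for $i=0,\ldots,3$, and for $\GG_4$ identify the locus $\{\ord_P(s_3\mid_{C_{2s}})\geq 2\}$ with the smoothable locus via the limit linear series computation of \Cref{subsubsec:geography_SL}. One small imprecision: your justification for $\GG_3$ (``no rational tails on $C_{2s}$'') is misleading, since in $\GG_3$ it is $C_{2s}$ itself that is the rational tail; the reason the contraction works there is that none of the line bundles $L_s,L_e,L_\ell$ have negative degree on $C_{2s}$, so \cite[Prop.~5.2.1]{Cobos} imposes no vanishing condition---but the paper is equally brief on this point.
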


Thus, in this particular example the locus $\overline{\ccM}^{c}_{0,2}(\Bl_\pt \bbP^2,2\ell)$ is a union of irreducible components, which is not known in general.

\bibliographystyle{alpha}
\bibliography{references}

\begin{thebibliography}{CRMMP24}

\bibitem[BC22]{Battistella_Carocci}
Luca Battistella and Francesca Carocci.
\newblock A geographical study of {{\(\overline{\mathcal{M}}_2\left(\mathbb{P}^2, 4\right)^{\text{main }} \)}}.
\newblock {\em Adv. Geom.}, 22(4):463--480, 2022.

\bibitem[BC23]{Battistella_Carocci_g2}
Luca Battistella and Francesca Carocci.
\newblock A smooth compactification of the space of genus two curves in projective space: via logarithmic geometry and {Gorenstein} curves.
\newblock {\em Geom. Topol.}, 27(3):1203--1272, 2023.

\bibitem[BCM20]{BCM-virt}
Luca Battistella, Francesca Carocci, and Cristina Manolache.
\newblock Virtual classes for the working mathematician.
\newblock {\em SIGMA Symmetry Integrability Geom. Methods Appl.}, 16:Paper No. 026, 38, 2020.

\bibitem[CFK17]{ciocan2017higher}
Ionu{\c{t}} Ciocan-Fontanine and Bumsig Kim.
\newblock Higher genus quasimap wall-crossing for semipositive targets.
\newblock {\em J. Eur. Math. Soc. (JEMS)}, 19(7):2051--2102, 2017.

\bibitem[CFK20]{ciocan2020quasimap}
Ionu{\c{t}} Ciocan-Fontanine and Bumsig Kim.
\newblock Quasimap wall-crossings and mirror symmetry.
\newblock {\em Publc. Math. Inst. Hautes {\'E}tudes Sci.}, 131(1):201--260, 2020.

\bibitem[CJR21]{clader2017higher}
Emily Clader, Felix Janda, and Yongbin Ruan.
\newblock Higher-genus wall-crossing in the gauged linear sigma model.
\newblock {\em Duke Math. J.}, 170(4):697--773, 2021.

\bibitem[CL12]{Chang-Li-maps-with-fields}
Huai-Liang Chang and Jun Li.
\newblock {Gromov-Witten} invariants of stable maps with fields.
\newblock {\em International mathematics research notices}, 2012(18):4163--4217, 2012.

\bibitem[CLS11]{CLS}
David~A. Cox, John~B. Little, and Henry~K. Schenck.
\newblock {\em Toric varieties}, volume 124 of {\em Graduate Studies in Mathematics}.
\newblock American Mathematical Society, Providence, RI, 2011.

\bibitem[Cox95]{Cox}
David~A. Cox.
\newblock The functor of a smooth toric variety.
\newblock {\em T{\^o}hoku Math. J. (2)}, 47(2):251--262, 1995.

\bibitem[CR24]{Cobos}
Alberto Cobos~Rabano.
\newblock The contraction morphism between maps and quasimaps to toric varieties, 2024.
\newblock arXiv preprint \href{https://arxiv.org/abs/2412.16295}{arXiv:2412.16295}.

\bibitem[CRMMP24]{CRMMP}
Alberto Cobos~Rabano, Etienne Mann, Cristina Manolache, and Renata Picciotto.
\newblock Desingularizations of sheaves and higher genus reduced {G}romov-{W}itten invariants, 2024.
\newblock arXiv preprint \href{https://arxiv.org/abs/2310.06727}{arXiv:2310.06727}.

\bibitem[EH86]{EH_Limit_linear_series}
David Eisenbud and Joe Harris.
\newblock Limit linear series: {Basic} theory.
\newblock {\em Invent. Math.}, 85:337--371, 1986.

\bibitem[Eis95]{Eisenbud}
David Eisenbud.
\newblock {\em Commutative algebra. {With} a view toward algebraic geometry}, volume 150 of {\em Grad. Texts Math.}
\newblock Berlin: Springer-Verlag, 1995.

\bibitem[Ful13]{Fulton}
William Fulton.
\newblock {\em Intersection theory}, volume~2.
\newblock Springer Science \& Business Media, 2013.

\bibitem[Har77]{Hartshorne_AG}
Robin Hartshorne.
\newblock {\em Algebraic geometry}.
\newblock Graduate Texts in Mathematics, No. 52. Springer-Verlag, New York-Heidelberg, 1977.

\bibitem[HL10]{Hu_Li_g1}
Yi~Hu and Jun Li.
\newblock Genus-one stable maps, local equations, and {V}akil-{Z}inger's desingularization.
\newblock {\em Math. Ann.}, 348(4):929--963, 2010.

\bibitem[HL11]{Hu_Li_derived_resolution}
Yi~Hu and Jun Li.
\newblock Derived resolution property for stacks, {Euler} classes and applications.
\newblock {\em Math. Res. Lett.}, 18(4):677--690, 2011.

\bibitem[HLN12]{Hu_Li_Niu_g2}
Yi~{Hu}, Jun {Li}, and Jingchen {Niu}.
\newblock {Genus Two Stable Maps, Local Equations and Modular Resolutions}.
\newblock arXiv preprint \href{https://arxiv.org/abs/1201.2427}{arXiv:1201.2427}, 2012.

\bibitem[HN19]{niu1}
Yi~Hu and Jingchen Niu.
\newblock Moduli of curves of genus one with twisted fields.
\newblock arXiv preprint \href{https://arxiv.org/abs/1906.10527}{arXiv:1906.10527}, 2019.

\bibitem[HN20]{niu2}
Yi~Hu and Jingchen Niu.
\newblock A theory of stacks with twisted fields and resolution of moduli of genus two stable maps.
\newblock arXiv preprint \href{https://arxiv.org/abs/2005.03384}{arXiv:2005.03384}, 2020.

\bibitem[KMMP25]{KMMP}
David Kern, \'Etienne Mann, Cristina Manolache, and Renata Picciotto.
\newblock Derived moduli of sections and push-forwards.
\newblock {\em Selecta Math. (N.S.)}, 31(2):Paper No. 40, 46, 2025.

\bibitem[LLO22]{Lee_Li_Oh}
Sanghyeon Lee, Mu-Lin Li, and Jeongseok Oh.
\newblock Quantum lefschetz property for genus two stable quasimap invariants, 2022.

\bibitem[LO19]{Lee_Oh_part1}
Sanghyeon Lee and Jeongseok Oh.
\newblock Algebraic reduced genus one gromov-witten invariants for complete intersections in projective spaces, 2019.

\bibitem[LO20]{Lee_Oh_part2}
Sanghyeon Lee and Jeongseok Oh.
\newblock Algebraic reduced genus one gromov-witten invariants for complete intersections in projective spaces, part 2, 2020.

\bibitem[LZ07]{li-zinger}
Jun Li and Aleksey Zinger.
\newblock Gromov--{W}itten invariants of a quintic threefold and a rigidity conjecture.
\newblock {\em Pacific Journal of Mathematics}, 233(2):417--480, 2007.

\bibitem[Man12]{Cristina-virt-push}
Cristina Manolache.
\newblock Virtual push-forwards.
\newblock {\em Geom. Topol.}, 16(4):2003--2036, 2012.

\bibitem[Man14]{Cristina-stable-maps}
Cristina Manolache.
\newblock Stable maps and stable quotients.
\newblock {\em Compos. Math.}, 150(9):1457--1481, 2014.

\bibitem[MOP11]{stable_quotients}
Alina Marian, Dragos Oprea, and Rahul Pandharipande.
\newblock The moduli space of stable quotients.
\newblock {\em Geom. Topol.}, 15(3):1651--1706, 2011.

\bibitem[Oss06]{Osserman_limit_series_moduli}
Brian Osserman.
\newblock A limit linear series moduli scheme.
\newblock {\em Ann. Inst. Fourier}, 56(6):1165--1205, 2006.

\bibitem[RSPW19]{ranganathan2019moduli}
Dhruv Ranganathan, Keli Santos-Parker, and Jonathan Wise.
\newblock Moduli of stable maps in genus one and logarithmic geometry, i.
\newblock {\em Geometry \& Topology}, 23(7):3315--3366, 2019.

\bibitem[Sta03]{Starr}
Jason Starr.
\newblock The {K}odaira dimension of spaces of rational curves on low degree hypersurfaces, 2003.
\newblock arXiv preprint \href{https://arxiv.org/abs/math/0305432}{arXiv:math/0305432}.

\bibitem[{Sta}22]{stacks-project}
The {Stacks project authors}.
\newblock The stacks project.
\newblock \url{https://stacks.math.columbia.edu}, 2022.

\bibitem[TiB14]{Teixidor}
Montserrat Teixidor~i Bigas.
\newblock Limit linear series for vector bundles.
\newblock {\em T{\^o}hoku Math. J. (2)}, 66(4):555--562, 2014.

\bibitem[Vis12]{Viscardi}
Michael Viscardi.
\newblock Alternate compactifications of the moduli space of genus one maps.
\newblock {\em Manuscripta Math.}, 139(1-2):201--236, 2012.

\bibitem[VZ07]{Vakil_Zinger}
Ravi Vakil and Aleksey Zinger.
\newblock A natural smooth compactification of the space of elliptic curves in projective space.
\newblock {\em Electron. Res. Announc. Am. Math. Soc.}, 13:53--59, 2007.

\bibitem[VZ08]{Vakil-Zinger-desingu-main-compo-}
Ravi Vakil and Aleksey Zinger.
\newblock A desingularization of the main component of the moduli space of genus-one stable maps into {$\Bbb P^n$}.
\newblock {\em Geom. Topol.}, 12(1):1--95, 2008.

\bibitem[Zho22]{Zhou}
Yang Zhou.
\newblock Quasimap wall-crossing for {GIT} quotients.
\newblock {\em Invent. Math.}, 227(2):581--660, 2022.

\bibitem[Zin08]{Zinger-standard-vs-reduced}
Aleksey Zinger.
\newblock Standard versus reduced genus-one {G}romov–{W}itten invariants.
\newblock {\em Geometry \& topology}, 12(2):1203--1241, 2008.

\bibitem[Zin09a]{Zinger-reduced-g1-CY}
Aleksey Zinger.
\newblock The reduced genus 1 {G}romov-{W}itten invariants of {C}alabi-{Y}au hypersurfaces.
\newblock {\em Journal of the American Mathematical Society}, 22(3):691--737, 2009.

\bibitem[Zin09b]{Zinger-genus-one-pseudo-holomorphic}
Aleksey Zinger.
\newblock A sharp compactness theorem for genus-one pseudo-holomorphic maps.
\newblock {\em Geometry \& topology}, 13(5):2427--2522, 2009.

\end{thebibliography}

\end{document}